\renewcommand*\subjclass[2][2000]{%
  \def\@subjclass{#2}%
  \@ifundefined{subjclassname@#1}{%
    \ClassWarning{\@classname}{Unknown edition (#1) of Mathematics
      Subject Classification; using '1991'.}%
  }{%
    \@xp\let\@xp\subjclassname\csname subjclassname@#1\endcsname
  }%
}
\newtheorem*{ThmA}{Theorem A}
\newtheorem*{ThmB}{Theorem B}
\newtheorem*{ThmC}{Theorem C}
\newtheorem*{ThmD}{Theorem D}
\newtheorem*{ThmE}{Theorem E}
\newtheorem*{ThmF}{Theorem F}
\newtheorem*{ThmG}{Theorem G}
\newtheorem*{ThmH}{Theorem H}
\newtheorem*{ThmI}{Theorem I}
\newtheorem*{ThmJ}{Theorem J}
\newtheorem*{ThmK}{Theorem K}
\newtheorem*{ThmL}{Theorem L}
\newtheorem*{ThmM}{Theorem M}
\newtheorem*{ThmN}{Theorem N}
\newtheorem*{ThmO}{Theorem O}
\newtheorem*{ThmP}{Theorem P}
\newtheorem*{ThmQ}{Theorem Q}
\newtheorem{Thm}{Theorem}[section]
\newtheorem{Cor}[Thm]{Corollary}
\newtheorem{Lem}[Thm]{Lemma}
\newtheorem{Pro}[Thm]{Proposition}
\theoremstyle{definition}
\theoremstyle{remark}
\newtheorem{Rem}[Thm]{\upshape\bfseries Remark}
\newtheorem{Ques}[Thm]{Question}
\numberwithin{equation}{section}
\newcommand{\ee}{\mathrm{e}}
\theoremstyle{definition}
\def\be{\begin{equation}}
\def\ee{\end{equation}}
\newcommand{\ben}{\begin{enumerate}}
\newcommand{\een}{\end{enumerate}}
\newcommand{\bca}{\begin{ca}}
\newcommand{\eca}{\end{ca}}
\newcommand{\br}{\begin{rem}}
\newcommand{\er}{\end{rem}}
\newcommand{\brs}{\begin{rems}}
\newcommand{\ers}{\end{rems}}
\newcommand{\bo}{\begin{obser}}
\newcommand{\eo}{\end{obser}}
\newcommand{\bos}{\begin{obsers}}
\newcommand{\eos}{\end{obsers}}
\newcommand{\bpf}{\begin{pf}}
\newcommand{\epf}{\end{pf}}
\newcommand{\ba}{\begin{array}}
\newcommand{\ea}{\end{array}}
\newcommand{\beq}{\begin{eqnarray}}
\newcommand{\beqq}{\begin{eqnarray*}}
\newcommand{\eeq}{\end{eqnarray}}
\newcommand{\eeqq}{\end{eqnarray*}}
\newcommand{\bprop}{\begin{prop}}
\newcommand{\eprop}{\end{prop}}
\numberwithin{equation}{section}
\newcounter{minutes}\setcounter{minutes}{\time}
\newcounter{hours}\setcounter{hours}{\time}
\begin{document}
\title{Sharp Riesz conjugate functions theorems for quasiregular mappings}

\author[Shaolin Chen, Manzi Huang,   Xiantao Wang and Jie Xiao ]{Shaolin Chen, Manzi Huang,   Xiantao Wang and Jie Xiao}




\address{S. L. Chen,    Center for Applied Mathematics of Guangxi, Guangxi Normal University,
Guilin, Guangxi 541004, People¡¯s Republic of China} \email{mathechen@126.com}

\address{M. Z.  Huang, Key Laboratory of High Performance Computing and Stochastic Information Processing,
College of Mathematics and Statistics, Hunan Normal University, Changsha, Hunan 410081, People's Republic of China}
 \email{mzhuang@hunnu.edu.cn}

\address{X. T. Wang, Key Laboratory of High Performance Computing and Stochastic Information Processing,
College of Mathematics and Statistics, Hunan Normal University, Changsha, Hunan 410081, People's Republic of China}
\email{xtwang@hunnu.edu.cn}

\address{J.  Xiao, Department of Mathematics and Statistics, Memorial University, ST. John's, NL
	A1C 5S7, Canada}
\email{jxiao@math.mun.ca}


\maketitle

\def\thefootnote{}
\footnotetext{2010 Mathematics Subject Classification. Primary: 30H10, 30C62.}
\footnotetext{Keywords.
Riesz type theorem, $\kappa$-pluriharmonic mappings, harmonic $K$-quasiregular mapping,  invariant harmonic $K$-quasiregular  mapping}
\makeatletter\def\thefootnote{\@arabic\c@footnote}\makeatother

\begin{abstract}
One of the celebrated results by Riesz \cite{Rie} is the Riesz conjugate functions theorem for analytic functions in the complex plane $\mathbb{C}$.
The study on the Riesz conjugate functions theorem for functions in higher dimensional spaces has attracted much attention. Fefferman and Stein \cite{FS-1972} established the Riesz conjugate functions theorem for the Cauchy-Riemann systems in the upper half real space $\mathbb{R}^{n+1}_{+}$. Astala and Koskela \cite{AS-2} investigated the Riesz conjugate functions theorem
for quasiconformal mappings of the unit ball $\mathbf{B}^{n}$ in $\mathbb{R}^n$, and posed an open problem which is as follows: Does there exist a quasiconformal analog for the Riesz theorem on conjugate functions?  The purpose of this paper is to develop some methods to study this topic further, in particular, Astala-Koskela's open problem. First, we prove a sharp Riesz conjugate functions theorem for a class of quasiregular mappings  of $\mathbf{B}^{n}$ for all $n\geq 2$ which satisfy the so-called Heinz's nonlinear differential inequality. As a direct consequence of this result, we find that the answer to Astala-Koskela's open problem is affirmative for harmonic quasiregular mappings of $\mathbf{B}^{n}$ for all $n\geq 2$.
Second, we obtain a sharp  Riesz conjugate functions theorem for invariant harmonic $K$-quasiregular mappings of $\mathbf{B}^{n}$ for all $n\geq 2$ which shows that the answer to Astala-Koskela's open problem is affirmative for these mappings.
At last, we introduce the family of $\kappa$-pluriharmonic mappings of the unit ball $\mathbb{B}^n$ in $\mathbb{C}^n$, and establish a sharp  Riesz conjugate functions theorem for these mappings for all $n\geq 1$. Consequently, we generalize and improve all main results by Liu and Zhu  \cite{L-Z}.
\end{abstract}

\maketitle
\pagestyle{myheadings}
\markboth{S. L. Chen,   M. Z.  Huang,  X. T. Wang and J. Xiao}{Sharp Riesz conjugate functions theorems for quasiregular mappings}

\tableofcontents

\section{Preliminaries and the statement of the main results}\label{csw-sec1}
Let
$\mathbb{C}^{s}$ and
$\mathbb{R}^{\gamma}$ denote the complex space of dimension
$s$ and the real space of dimension
$\gamma$, respectively, where
$s$ and
$\gamma$ are positive integers.
We identify $\mathbb{C}^{s}$ with the
real $2s$-space $\mathbb{R}^{2s}$ if needed. For
$$\Big(z=(z_{1},\ldots,z_{s}), w=(w_{1},\ldots,w_{s})\Big)\in\mathbb{C}^{s}\times \mathbb{C}^{s},
$$
the standard
 Hermitian scalar product of $z$ and $w$, and the
Euclidean norm of $z$ are given by $$\langle z,w\rangle :=
\sum_{j=1}^sz_j\overline{w}_j\;\;\mbox{and}\;\; |z|:={\langle
z,z\rangle}^{1/2}, $$ respectively. For $a\in \mathbb{C}^s$ (resp. $a\in \mathbb{R}^\gamma$ with $\gamma\geq 2$), we use
$\mathbb{B}^s(a,r)$ (resp. $\mathbf{B}^\gamma(a,r)$)
to denote the open ball of radius $r>0$ with center $a$, i.e.,
$$
\mathbb{B}^s(a,r)=\{z\in \mathbb{C}^{s}:\, |z-a|<r\}\ \  (\text{resp. $\mathbf{B}^\gamma(a,r)=\{x\in \mathbb{R}^{\gamma}:\, |x-a|<r\}$}).
$$
Let $\mathbf{S}^{\gamma-1}(a,r):=\partial\mathbf{B}^{\gamma}(a,r)$
be the boundary of $\mathbf{B}^{\gamma}(a,r)$, that is, $$\mathbf{S}^{\gamma-1}(a,r)=\{x\in \mathbb{R}^{\gamma}:\, |x-a|=r\}.
$$
In the following, we identify $\mathbb{B}^\gamma$ with $\mathbf{B}^{2\gamma}$.
Then the boundary of $\mathbb{B}^{s}(a,r)$ is
$$\partial\mathbb{B}^{s}(a,r)=\mathbf{S}^{2s-1}(a,r).$$
For convenience,
let

$$
\begin{cases}
\mathbb{B}^{s}:=\mathbb{B}^{s}(0,1);\\
 \mathbf{B}^{\gamma}_{r}:=\mathbf{B}^{\gamma}(0,r);\\ \mathbf{B}^{\gamma}:=\mathbf{B}^{\gamma}_{1};\\ \mathbf{S}^{\gamma-1}:=\mathbf{S}^{\gamma-1}(0,1);\\
 \mathbb{D}:=\mathbb{B}^{1}=\mathbf{B}^{2}.
 \end{cases}
 $$

\subsection{Hardy type spaces}
For $p\in(0,\infty]$, $n\geq2$ and $m\geq1$, the Hardy type space
$\mathbb{H}^{p}(\mathbf{B}^{n}, \mathbb{R}^{m})$ consists of all functions
$f:~\mathbf{B}^{n}\rightarrow\mathbb{R}^{m}$ such that $f$ is measurable,
$M_{p}(r,f)$ exist for all $r\in[0,1)$ and  $ \|f\|_{p}<\infty$,
where
$$\|f\|_{p}=
\begin{cases}
\displaystyle\sup_{0<r<1} \big\{M_{p}(r,f)\big\}
& \mbox{if } p\in(0,\infty),\\
\displaystyle\sup_{z\in\mathbf{B}^{n}} \big\{ |f(z)| \big\} &\mbox{if } p=\infty,
\end{cases}
$$
 $$M_{p}(r,f)=\left(\int_{\mathbf{S}^{n-1}}|f(r\zeta)|^{p}\,d\sigma(\zeta)\right)^{\frac{1}{p}}$$
and $d\sigma$ denotes the normalized Lebesgue surface measure on
$\mathbf{S}^{n-1}$.

Similarly, we can define the Hardy type space $\mathbb{H}^{p}(\mathbb{B}^{n}, \mathbb{C}^{m})$ for $n\geq 1$ and $m\geq 1$.

For $p\in[1,\infty)$, let $L^{p}(\Omega, dV_{N})$ denote the classical Banach space consisting of all
measurable functions on a bounded domain $\Omega\subseteq\mathbb{R}^{n}$ that are $p$-integrable, where
$dV_{N}$ is the normalized Lebesgue volume measure on $\Omega$.
The norm in $L^{p}(\Omega, dV_{N})$ is defined by
$$\|f\|_{L^{p}(\Omega)}=\left(\int_{\Omega}|f(x)|^{p}dV_{N}(x)\right)^{\frac{1}{p}}.$$

Let us recall one of the celebrated results by Riesz, which is the so-called Riesz conjugate functions theorem $($see \cite{Rie}, or \cite[p.53]{D-1970} and \cite{Ste}$)$.

\begin{ThmA}\label{M-R-1}
Suppose that $u$ is harmonic in $\mathbb{D}$. If $u\in\mathbb{H}^{p}(\mathbb{D}, \mathbb{R})$ for some $p\in(1,\infty)$, then its
harmonic conjugate $v$ is also of class $\mathbb{H}^{p}(\mathbb{D}, \mathbb{R})$ provided that $v(0)=0$. Furthermore, there is a positive constant $C_1$
such that
\be\label{Riez}M_{p}(r,v)\leq C_1 M_{p}(r,u),~r\in[0,1),\ee
for every harmonic $u\in\mathbb{H}^{p}(\mathbb{D}, \mathbb{R})$ and its harmonic conjugate $v$ with $v(0)=0$,
where $C_1=C_1(p)$, which means that the constant $C_1$ depends only on the parameter $p$.
\end{ThmA}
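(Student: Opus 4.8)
The plan is to reduce \eqref{Riez} to the $L^{p}$-boundedness of the conjugate function operator $H$ on the circle $\mathbf{S}^{1}$, i.e.\ the operator sending the boundary values of a harmonic function to those of its harmonic conjugate normalized to vanish at the origin. Fix $r\in[0,1)$ and put $f=u+\ii v$; since $r<1$, the map $z\mapsto f(rz)$ is analytic on a disc strictly larger than $\overline{\mathbb{D}}$, so $u(r\,\cdot)$ and $v(r\,\cdot)$ extend real-analytically to $\mathbf{S}^{1}$, one has $v(r\,\cdot)\big|_{\mathbf{S}^{1}}=H\big(u(r\,\cdot)\big|_{\mathbf{S}^{1}}\big)$ (here the normalization $v(0)=0$ is used), and $M_{p}(r,u)=\big\|u(r\,\cdot)\big|_{\mathbf{S}^{1}}\big\|_{L^{p}(\mathbf{S}^{1})}$, $M_{p}(r,v)=\big\|v(r\,\cdot)\big|_{\mathbf{S}^{1}}\big\|_{L^{p}(\mathbf{S}^{1})}$. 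Hence $M_{p}(r,v)\le\|H\|_{L^{p}\to L^{p}}\,M_{p}(r,u)$ for every $r\in[0,1)$; in particular $u\in\mathbb{H}^{p}(\mathbb{D},\mathbb{R})$ forces $v\in\mathbb{H}^{p}(\mathbb{D},\mathbb{R})$, and \eqref{Riez} follows with $C_{1}=C_{1}(p)=\|H\|_{L^{p}\to L^{p}}$. It therefore remains to prove that $H$ is bounded on $L^{p}(\mathbf{S}^{1})$ for every $p\in(1,\infty)$.

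For $p\in(1,2]$ I would use M.~Riesz's positivity argument. Splitting a real datum into its positive and negative parts (and adding a small constant $\varepsilon>0$, to be let go to $0$ at the end) reduces the matter, up to a fixed multiplicative factor, to estimating $M_{p}(r,v)$ by $M_{p}(r,u)$ when $u>0$ in $\mathbb{D}$. Then $f=u+\ii v$ omits the ray $(-\infty,0]$, so the principal branch of $f^{p}$ is analytic in $\mathbb{D}$; writing $f=\varrho\,\ee^{\ii\varphi}$ with $|\varphi|<\pi/2$ one has $u^{p}=(\varrho\cos\varphi)^{p}$ and $\re(f^{p})=\varrho^{p}\cos(p\varphi)$. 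The heart of the matter is the elementary inequality
\[
|\sin\varphi|^{p}\le A_{p}\cos^{p}\varphi-B_{p}\cos(p\varphi),\qquad |\varphi|<\tfrac{\pi}{2},
\]
valid for suitable constants $A_{p},B_{p}>0$ depending only on $p$: first choose $B_{p}$ so large that the right-hand side dominates near $\varphi=\pm\pi/2$ (possible precisely because $\cos(p\pi/2)<0$ when $1<p\le 2$), and then choose $A_{p}$ so large that it dominates on the complementary compact range of $\varphi$, where $\cos\varphi$ is bounded below. Multiplying by $\varrho^{p}$ yields the pointwise bound $|v|^{p}\le A_{p}\,u^{p}-B_{p}\,\re(f^{p})$ on $\mathbb{D}$. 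Integrating over the circle $|z|=r$ and using that $\re(f^{p})$ is harmonic there, so that $\frac{1}{2\pi}\int_{0}^{2\pi}\re\big(f(r\ee^{\ii\theta})^{p}\big)\,d\theta=\re\big(f(0)^{p}\big)=u(0)^{p}\ge 0$, one may discard the last term and obtain $M_{p}(r,v)^{p}\le A_{p}\,M_{p}(r,u)^{p}$.

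For $p\in[2,\infty)$ I would argue by duality. On $L^{2}(\mathbf{S}^{1})$ conjugation is antisymmetric,
\[
\int_{\mathbf{S}^{1}}(H\psi)\,\chi\,d\sigma=-\int_{\mathbf{S}^{1}}\psi\,(H\chi)\,d\sigma\qquad(\psi,\chi\ \text{real}),
\]
which is immediate from Fourier series; hence, for $q\in[2,\infty)$ with conjugate exponent $q'\in(1,2]$, testing $H\psi$ against $\chi\in L^{q'}(\mathbf{S}^{1})$ and using the bound already obtained on $L^{q'}$ gives $\|H\|_{L^{q}\to L^{q}}\le\|H\|_{L^{q'}\to L^{q'}}$, which reduces the range $[2,\infty)$ to the range $(1,2]$. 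This completes the scheme. The genuinely delicate point is the trigonometric inequality in the second paragraph---equivalently, the fact that the argument collapses at the endpoint $p=1$, where $\cos(p\pi/2)=0$---while the dilation reduction to the boundary, the $\varepsilon$-regularization, and the density considerations in the duality step are routine. (Alternatively one could establish $L^{p}$-boundedness for even integers $p=2k$ by expanding $\int\re(f^{2k})$ binomially and absorbing terms, and then interpolate; or invoke Calder\'on--Zygmund theory for the conjugation kernel on $\mathbf{S}^{1}$.)
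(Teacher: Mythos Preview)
Your proof is correct and is essentially the classical argument due to M.~Riesz (the trigonometric inequality $|\sin\varphi|^{p}\le A_{p}\cos^{p}\varphi-B_{p}\cos(p\varphi)$ for $1<p\le 2$, followed by duality for $p>2$), as presented for instance in Duren's book. Note, however, that the paper does \emph{not} prove Theorem~A at all: it is stated as a classical background result and simply cited to \cite{Rie}, \cite[p.~53]{D-1970}, and \cite{Ste}. So there is no ``paper's own proof'' to compare against; your sketch matches the standard textbook treatment that the paper is invoking.
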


In 1933, Stein gave a new proof of Theorem A, and obtained a new upper bound for the constant $C_1$ (see \cite{Ste}).
The best possibility for $C_1$ in Theorem A was determined by Pichorides (see \cite{Pi}) and, independently, by Cole (cf. \cite{Gam}).
We refer to \cite{AG,CH-2023, Ve} for more discussions on the related studies.

 Observe that Theorem A can be formulated as follows:

\begin{ThmB}\label{M-R-2}
 Suppose that $f=u+iv$ is analytic in $\mathbb{D}$ with $v(0)=0$. There exists a constant $p_{0}=1$ such that if
$u\in\mathbb{H}^{p}(\mathbb{D}, \mathbb{R})$ for some $p>p_{0}$, then $f\in\mathbb{H}^{p}(\mathbb{D}, \mathbb{R}^{2})$.
\end{ThmB}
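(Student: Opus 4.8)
The plan is to derive Theorem B directly from Theorem A, since the two are claimed to be equivalent reformulations. First I would observe that the content of Theorem B is precisely that, for an analytic $f=u+iv$ on $\mathbb{D}$ with $v(0)=0$ and $u\in\mathbb{H}^p(\mathbb{D},\mathbb{R})$ with $p>1$, one has $f\in\mathbb{H}^p(\mathbb{D},\mathbb{R}^2)$; here $v$ is exactly the harmonic conjugate of $u$ normalized by $v(0)=0$, so Theorem A applies verbatim and gives $v\in\mathbb{H}^p(\mathbb{D},\mathbb{R})$ together with the pointwise-in-$r$ bound $M_p(r,v)\le C_1 M_p(r,u)$. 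The remaining work is the routine passage from control of $u$ and $v$ separately to control of the vector-valued (equivalently, $\mathbb{C}$-valued) function $f=u+iv$.

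For that passage I would use the elementary inequality $|f|=\sqrt{u^2+v^2}$ together with the fact that, for any $p\in(0,\infty)$, there is a constant $c_p$ with $(a+b)^p\le c_p(a^p+b^p)$ for $a,b\ge 0$; applying this with $a=|u|$, $b=|v|$ and integrating over $\mathbf{S}^{1}$ against $d\sigma$ yields
\[
M_p(r,f)^p=\int_{\mathbf{S}^{1}}|f(r\zeta)|^p\,d\sigma(\zeta)\le \int_{\mathbf{S}^{1}}\big(|u(r\zeta)|+|v(r\zeta)|\big)^p\,d\sigma(\zeta)\le c_p\big(M_p(r,u)^p+M_p(r,v)^p\big).
\]
Combining this with the Theorem A estimate $M_p(r,v)\le C_1 M_p(r,u)$ gives $M_p(r,f)^p\le c_p(1+C_1^p)M_p(r,u)^p$, hence $\|f\|_p\le \big(c_p(1+C_1^p)\big)^{1/p}\|u\|_p<\infty$, which is exactly $f\in\mathbb{H}^p(\mathbb{D},\mathbb{R}^2)$. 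I would also note that $M_p(r,f)$ exists for each $r$ because $u$ and $v$ are continuous (indeed real-analytic) on $\mathbb{D}$, so there is no measurability issue; the hypothesis that $p_0=1$ is optimal is inherited from the corresponding sharpness statement for Theorem A and need not be reproved here.

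There is essentially no serious obstacle: the only point requiring a word of care is the identification of the imaginary part of $f$ with "the" harmonic conjugate of $u$ in the sense of Theorem A. Since $\mathbb{D}$ is simply connected, a real harmonic $u$ has a harmonic conjugate unique up to an additive constant, and the normalization $v(0)=0$ pins it down; writing $f=u+iv$ analytic then forces $v$ to be precisely this normalized conjugate, so Theorem A is applicable with no ambiguity. Thus the proof is a short two-line deduction, and I would present it as such, emphasizing that Theorems A and B are genuinely equivalent: conversely, given Theorem B, restricting to the imaginary part recovers the inequality \eqref{Riez} for $v$, so no generality is lost in the reformulation.
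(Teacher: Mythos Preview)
Your proposal is correct and matches the paper's treatment: the paper does not give a separate proof of Theorem~B at all, but simply introduces it with the sentence ``Observe that Theorem~A can be formulated as follows,'' treating it as an immediate restatement. Your two-line deduction (identify $v$ as the normalized harmonic conjugate, apply Theorem~A, then combine via $(a+b)^p\le c_p(a^p+b^p)$) is precisely the routine justification of that reformulation, so there is nothing to compare.
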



Inspired by Theorem B, one naturally asks the following question.

\begin{Ques}\label{Question-1.1}
Is there any result similar to Theorem B for functions in $\mathbb{C}^n$ for all $n\geq 1$ or in $\mathbb{R}^m$ for all $m\geq 2$?
\end{Ques}

By generalizing the Cauchy-Riemann equations to the setting of higher dimensional spaces, Stein and Weiss
introduced a class of vector-valued harmonic functions in $(n+1)$-dimensional upper half real space $\mathbb{R}_{+}^{n+1}$ and defined its corresponding Hardy type space which is called the
{\it Stein-Weiss $H^{p}$ space}. See \cite{SW} for the details.
In \cite{FS-1972}, Fefferman and Stein considered Question \ref{Question-1.1} for the class of functions introduced by Stein and Weiss. Their results show that the answer to Question
\ref{Question-1.1} is affirmative for these functions in $\mathbb{R}^{n+1}$. The precise statement of their results is as follows.

\begin{ThmC}\label{F-S-1} {\rm (\cite[lines 3-10 in page 168 and Theorem 9]{FS-1972})}
For $p>(n-1)/n$ and a function $u=(u_{1},u_{2},\ldots,u_{n+1})$ in $\mathbb{R}_{+}^{n+1}$, the following statements are equivalent.
\begin{enumerate}
\item[{\rm (1)}] $u$ belongs to
the Stein-Weiss $H^{p}$ space.
\item[{\rm (2)}] $u_{1}$ belongs to
the Stein-Weiss $H^{p}$ space.
\item[{\rm (3)}] The maximal function
$u^{\ast}(x)=\sup_{|y-x|<t}\{|u_{1}(y,t)|\}$ belongs to $L^{p}(\mathbb{R}^{n})$.
\end{enumerate}
\end{ThmC}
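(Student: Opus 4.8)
The plan is to derive the three equivalences from two classical inputs: the Stein--Weiss subharmonicity principle, which says that if $u=(u_{1},\dots,u_{n+1})$ solves the generalized Cauchy--Riemann (conjugate harmonic) system in $\mathbb{R}_{+}^{n+1}$ --- which is what ``a function $u$ in $\mathbb{R}_{+}^{n+1}$'' is taken to mean here --- then $|u|^{q}$ is subharmonic in $\mathbb{R}_{+}^{n+1}$ for every $q\geq (n-1)/n$; and the Hardy--Littlewood maximal theorem together with the pointwise control of the nontangential maximal function of a half-space Poisson integral $P[V]$ by the Hardy--Littlewood maximal function $\mathcal{M}V$. Fix $q=(n-1)/n$ and note that the hypothesis $p>(n-1)/n$ is precisely the statement $s:=p/q>1$. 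I would prove the assertions cyclically, in the order $(1)\Rightarrow(3)\Rightarrow(2)\Rightarrow(1)$.

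For $(1)\Rightarrow(3)$: the nonnegative subharmonic function $w:=|u|^{q}$ satisfies $\sup_{t>0}\|w(\cdot,t)\|_{L^{s}(\mathbb{R}^{n})}<\infty$ because $u$ lies in the Stein--Weiss $H^{p}$ space and $s=p/q>1$. Taking a weak limit point $V\in L^{s}(\mathbb{R}^{n})$ of $\{w(\cdot,\tau)\}_{\tau>0}$ as $\tau\to 0$ (reflexivity of $L^{s}$) and using the sub-mean-value inequality $w(x,t+\tau)\leq P[w(\cdot,\tau)](x,t)$, one gets $w(x,t)\leq P[V](x,t)$ throughout $\mathbb{R}_{+}^{n+1}$; hence the nontangential maximal function $N(w)(x):=\sup_{|y-x|<t}w(y,t)$ obeys $N(w)\leq C\,\mathcal{M}V$ pointwise, and the maximal theorem (here $s>1$ is essential) gives $\|N(w)\|_{L^{s}(\mathbb{R}^{n})}\leq C\|V\|_{L^{s}(\mathbb{R}^{n})}<\infty$. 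Since $|u_{1}|\leq|u|=w^{1/q}$, the function $u^{\ast}$ of $(3)$ satisfies $(u^{\ast})^{p}\leq N(w)^{s}\in L^{1}(\mathbb{R}^{n})$, so $u^{\ast}\in L^{p}(\mathbb{R}^{n})$. The implication $(3)\Rightarrow(2)$ is then immediate, since $M_{p}(t,u_{1})^{p}=\int_{\mathbb{R}^{n}}|u_{1}(x,t)|^{p}\,dx\leq\|u^{\ast}\|_{L^{p}(\mathbb{R}^{n})}^{p}$ for every $t>0$.

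The substantive step is $(2)\Rightarrow(1)$, and it is where I expect the real difficulty. When $p>1$ it is short: the harmonic function $u_{1}$ with bounded $L^{p}$ integral means equals $P[f]$ for some $f\in L^{p}(\mathbb{R}^{n})$ by weak compactness, the remaining components of the conjugate system are $u_{j+1}=P[R_{j}f]$ with $R_{j}$ the Riesz transforms, and $L^{p}$-boundedness of $R_{j}$ gives $\sup_{t>0}\|u(\cdot,t)\|_{L^{p}(\mathbb{R}^{n})}<\infty$. For $(n-1)/n<p\leq 1$ the Riesz transforms are no longer bounded on $L^{p}$, and here I would run the Fefferman--Stein argument: starting from the boundary distribution of $u_{1}$, perform a Calder\'{o}n--Zygmund stopping-time decomposition of $\mathbb{R}^{n}$ adapted to the level sets of $u^{\ast}$, and combine it with the subharmonicity of $|u|^{q}$ and a Green's-identity estimate over the associated Whitney regions to dominate the nontangential maximal function of $|u|$ by $C\big(\mathcal{M}\big((u^{\ast})^{q}\big)\big)^{1/q}$ up to an $L^{p}$ error term; integrating and invoking the maximal theorem in $L^{p/q}$ (again using $p/q>1$) yields $\sup_{t>0}\int_{\mathbb{R}^{n}}|u(x,t)|^{p}\,dx\leq C\|u^{\ast}\|_{L^{p}(\mathbb{R}^{n})}^{p}<\infty$, i.e.\ $u$ belongs to the Stein--Weiss $H^{p}$ space. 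The obstacle is exactly this passage in the range $p\leq 1$: neither duality nor $L^{p}$-boundedness of singular integrals is available, and the step from ``$u_{1}$ controlled'' to ``the whole conjugate system controlled'' has to be funnelled through the subharmonicity exponent $q=(n-1)/n$ --- which is why that exponent and the strict inequality $p>(n-1)/n$ (equivalently $p/q>1$) occur in the hypotheses, while the other two implications are just the harmonic-majorant/maximal-function dictionary.
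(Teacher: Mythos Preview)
The paper does not prove Theorem~C: it is quoted verbatim as a known result of Fefferman and Stein \cite{FS-1972}, with a precise page-and-theorem citation, and is used only to motivate Question~\ref{Question-1.1}. There is therefore no ``paper's own proof'' to compare your sketch against.

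That said, your outline is a faithful summary of the original Fefferman--Stein argument: the Stein--Weiss subharmonicity of $|u|^{(n-1)/n}$, the harmonic-majorant/maximal-function dictionary for $(1)\Rightarrow(3)\Rightarrow(2)$, the Riesz-transform route for $(2)\Rightarrow(1)$ when $p>1$, and the atomic/stopping-time machinery for $(n-1)/n<p\le 1$. You have correctly identified both where the exponent $(n-1)/n$ enters and where the genuine difficulty lies. If you intend this as a self-contained appendix proof, the only caveat is that the $p\le 1$ step, as you acknowledge, is a substantial piece of the Fefferman--Stein paper and your paragraph is a plan rather than a proof; but as a sketch it is accurate.
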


Astala and Koskela considered Question \ref{Question-1.1} for the class of quasiconformal mappings (see \cite{AS-2}).
To state their results, we need to introduce necessary notions and notations.

\subsection{Quasiregular mappings and quasiconformal mappings}
A continuous mapping $f:\ \Omega\subset\mathbb{R}^{n}\rightarrow\mathbb{R}^{n}$ is called {\it $K$-quasiregular} if $f\in
W_{n,{\rm loc}}^{1}(\Omega)$ and
$$|f'(x)|^{n}\leq KJ_{f}(x)
$$ for almost every $x\in\Omega$,
where $K$ $(\geq1)$ is a constant, $f\in W_{n,{\rm loc}}^{1}(\Omega)$ means that the
distributional derivatives $\partial f_{j}/\partial x_{k}$ of the
coordinates $f_{j}$ of $f $ are locally in $L^{n}(\Omega)$ and $J_{f}(x)$
denotes the Jacobian of $f$ (cf. \cite{V}). In particular, if a $K$-quasiregular mapping $f$ is homeomorphic, then
$f$ is a $K$-quasiconformal mapping (see \cite{Va}). We refer to \cite{D-P,Rick} for more details on $K$-quasiregular mappings.

By applying the growth theorem and the distortion theorem of quasiconformal mappings, Astala and Koskela obtained the following.

\begin{ThmD}\label{A-K}{\rm (\cite[Theorem  6.2]{AS-2})}
Suppose that $f$ is $K$-quasiconformal in $\mathbf{B}^{n}$ $(n\geq2)$ with one of its coordinate functions belongs to
$\mathbb{H}^{p}(\mathbf{B}^{n}, \mathbb{R}^{n})$ for some $p>0$. Then $f\in\mathbb{H}^{q}(\mathbf{B}^{n}, \mathbb{R}^{n})$ for all $q<p$.
\end{ThmD}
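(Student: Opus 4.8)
\textit{Step 1: reductions.} Post-composing $f$ with a rotation of $\mathbb R^n$ carrying the distinguished coordinate axis onto the $n$-th axis, and with the translation $w\mapsto w-f(0)$, we may assume $f(0)=0$ and that the coordinate lying in $\mathbb H^p$ is $f_n$: neither step changes $K$, rotations preserve every $M_q(r,\cdot)$, and $f_n-f_n(0)\in\mathbb H^p$ since constants belong to every Hardy space. As $f$ is continuous on $\overline{\mathbf B^n_{1/2}}$, it suffices to bound $M_q(r,f)$ uniformly for $r\in[\tfrac12,1)$, which I will do by first proving a uniform \emph{weak-$L^p$} bound for $|f|$ on each sphere and then paying an $\varepsilon$ in the exponent.

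\textit{Step 2: the weak-type estimate.} The claim is that, with $C=C(n,K)$,
\begin{equation*}
\sigma\bigl(\{\zeta\in\mathbf S^{n-1}:\ |f(r\zeta)|>\lambda\}\bigr)\ \le\ C\,\lambda^{-p}\,\|f_n\|_p^{p}\qquad(0<r<1,\ \lambda>0).
\end{equation*}
Fix $\zeta$ with $|f(r\zeta)|>\lambda$ and put $B=\mathbf B^n\bigl(r\zeta,\,c(n,K)(1-r)\bigr)$, which lies well inside $\mathbf B^n$. By the distortion theorem for $K$-quasiconformal mappings, $f(B)$ is sandwiched between two concentric balls about $f(r\zeta)$ with comparable radii, and by the growth theorem the inner radius $\rho_\zeta$ is comparable to $\operatorname{dist}\bigl(f(r\zeta),\partial f(\mathbf B^n)\bigr)\asymp(1-r)|f'(r\zeta)|$. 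Split the level set according to whether $\rho_\zeta\ge c'(n,K)\lambda$ or not. On the ``round'' part, $f_n$ attains on $B$ every value in $\bigl(f_n(r\zeta)-\rho_\zeta,\ f_n(r\zeta)+\rho_\zeta\bigr)$, so $\sup_B|f_n|\ge\rho_\zeta\ge c'\lambda$; since $B$ sits in the spherical shell $\{y:\ |y|\in[r-c(1-r),r+c(1-r)]\}$ within angular distance $\asymp(1-r)$ of $\zeta$, a bounded-overlap (Vitali) covering at each fixed radius together with $\sup_s M_p(s,f_n)=\|f_n\|_p$ bounds the $\sigma$-measure of the round part by $C\lambda^{-p}\|f_n\|_p^p$. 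On the complementary ``tentacular'' part — where $f(r\zeta)$ is far from $0$ yet close to $\partial f(\mathbf B^n)$ — one argues through moduli of ring domains: such $\zeta$ force a long thin ring inside $f(\mathbf B^n)$, hence, by the quasi-invariance of the modulus under $K$-quasiconformal maps, a ring of small modulus in $\mathbf B^n$ separating those $\zeta$ from the bulk, which again caps their total measure by $C\lambda^{-p}\|f_n\|_p^p$.

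\textit{Step 3: from weak-$L^p$ to $\mathbb H^q$.} Integrating the distribution function from Step 2, for every $q<p$,
\begin{equation*}
M_q(r,f)^q\ =\ q\int_0^\infty\lambda^{q-1}\,\sigma\bigl(\{|f(r\cdot)|>\lambda\}\bigr)\,d\lambda\ \le\ C(n,K,p,q)\,\|f_n\|_p^{q},
\end{equation*}
uniformly in $r\in[\tfrac12,1)$ — the integral converges at $\infty$ precisely because $q<p$. Combined with Step 1 this yields $f\in\mathbb H^q(\mathbf B^n,\mathbb R^n)$ for all $q<p$.

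\textit{Main obstacle.} The genuine difficulty is the tentacular part of Step 2. Pointwise, a large value of $|f(r\zeta)|$ gives no information about $f_n$ near $r\zeta$, because $f(r\zeta)$ may lie deep in a narrow tentacle of $f(\mathbf B^n)$ pointing transversally to the $e_n$-axis, where $f_n$ is small; the single-coordinate hypothesis is powerless there. It is quasiconformality — concretely, the quasi-invariance of the modulus of ring domains — that confines such tentacular directions to a $\sigma$-small set, and making this estimate quantitative and uniform as $r\to1$ is the crux. Since the same mechanism produces only a weak-$L^p$ bound rather than an $L^p$ one, this is exactly why the conclusion degrades from $q=p$ to $q<p$.
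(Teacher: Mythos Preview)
The paper does not supply a proof of Theorem~D: it is quoted from Astala--Koskela \cite{AS-2} as a background result, so there is no in-paper argument to compare your sketch against.

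On its own merits, your sketch has a real gap in Step~2. The ``round'' part can be repaired with additional work --- one shows that a definite \emph{volume fraction} of each Whitney ball $B$ has $|f_n|\gtrsim\lambda$ (because $f(B)$ contains a Euclidean ball of radius $\rho_\zeta\ge c'\lambda$, and only a slab of width $\ll\rho_\zeta$ in that image can have small $n$-th coordinate; quasiconformality then pulls back the volume fraction), after which Chebyshev on the shell $\{|y|\approx r\}$ against $\int_{\text{shell}}|f_n|^p\,dV\lesssim(1-r)\|f_n\|_p^p$ yields the weak bound. Your write-up does not do this: the line ``$\sup_B|f_n|\ge c'\lambda$ plus bounded-overlap Vitali plus $\sup_s M_p(s,f_n)=\|f_n\|_p$'' is not an argument, since a single large value of $|f_n|$ in each $B$ cannot be summed against an integral mean. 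More seriously, the ``tentacular'' part is asserted rather than proved. You invoke quasi-invariance of the modulus of a ``long thin ring'', but the ring separating $f(r\zeta)$ (with a nearby boundary point) from $0$ has modulus of order $\log(1/c')$, a constant depending only on $n,K$ --- no $\lambda^{-p}$ decay emerges from it, and the hypothesis $f_n\in\mathbb H^p$ never enters that paragraph. Since you yourself flag this as the crux, what you have is a plausible strategy with the decisive estimate missing rather than a proof.
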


Meanwhile, in \cite{AS-2}, Astala and Koskela constructed a $K$-quasiconformal mapping to show that $q$ cannot reach $p$,
where $K=K(n,p)$.
Since $K$ depends on the parameter $p$,
we see that this example does not show that the answer to Question \ref{Question-1.1} is negative for quasiconformal mappings.
In order to thoroughly understand this issue for $K$-quasiconformal mappings, Astala and Koskela asked whether
there exists a quasiconformal analog for the Riesz theorem on conjugate functions. More precisely,
given $n\geq2$ and $K\geq1$, does there exist $p_0=p_0(n,K)$ such that each
$K$-quasiconformal mapping whose first coordinate function belongs to $\mathbb{H}^{p}(\mathbf{B}^{n}, \mathbb{R})$
for some $p>p_{0}$, in fact, belongs itself to $\mathbb{H}^{p}(\mathbf{B}^{n}, \mathbb{R}^{n})$ (see Open problem (1) in \cite{AS-2})?
Motivated by this open problem, we raise the following question concerning quasiregular mappings.

\begin{Ques}\label{Question-1.2}
Let $n\geq2$ and $K\geq1$. Suppose $f=(f_{1},\ldots,f_{n})$ is a $K$-quasiregular mapping from
$\mathbf{B}^{n}$ into $\mathbb{R}^{n}$. Determine a sufficient condition on $f$ ensuring that there exist an
index $j\in \{1, \ldots, n\}$ and a constant $p_{0}=p_{0}(n,K)$ such that if $f_j\in \mathbb{H}^{p}(\mathbf{B}^{n}, \mathbb{R}^{n})$ for some $p>p_{0}$, then $f\in \mathbb{H}^{p}(\mathbf{B}^{n}, \mathbb{R})$.

\end{Ques}

The purpose of this paper is to discuss Questions \ref{Question-1.1} and \ref{Question-1.2}. Our results are Theorems \ref{lem-im-1}, \ref{thm-4} and  \ref{thm-1} below.
For their precise statements, some preparation is needed. We start the preparation with the definition of harmonic functions.

\subsection{Harmonic functions}
A twice continuously differentiable  function $u$ of a domain $D\subset\mathbb{R}^{m}$ into $\mathbb{R}:=\mathbb{R}^{1}$ is said to be
{\it harmonic} if $$\Delta\,u(x)=0\ \ \forall\ \ x=(x_{1},\ldots,x_{m})\in D,
$$
where
$$\Delta:=\sum_{j=1}^{m}\frac{\partial^{2}}{\partial\,x^{2}_{j}}$$ is the usual Laplacian operator (see \cite{ABR-2001}).
Furthermore, a twice continuously differentiable  complex-valued function $f=u+iv$ of a domain $\Omega\subset\mathbb{C}^{n}$ into $\mathbb{C}:=\mathbb{C}^{1}$ is said to be
{\it harmonic} if not only $$\Delta\,u(z)=\sum_{j=1}^{n}\left(\frac{\partial^{2} u(z)}{\partial x_{j}^{2}}+\frac{\partial^{2} u(z)}{\partial y_{j}^{2}}\right)=4\sum_{j=1}^{n}\frac{\partial^{2}u(z)}{\partial z_{j}\partial\overline{z}_{j}}=0$$
but also
$$\Delta\,v(z)=\sum_{j=1}^{n}\left(\frac{\partial^{2} v(z)}{\partial x_{j}^{2}}+\frac{\partial^{2} v(z)}{\partial y_{j}^{2}}\right)
=4\sum_{j=1}^{n}\frac{\partial^{2}v(z)}{\partial z_{j}\partial\overline{z}_{j}}=0,$$
where
$$
z=(z_{1},\ldots,z_{n})=(x_{1}+iy_{1},\ldots,x_{n}+iy_{n})\in\Omega.
$$
Moreover, if $n=1$ and $\Omega\subset\mathbb{C}$ is a simply connected domain, then a complex-valued harmonic function $f:\,\Omega\rightarrow
\mathbb{C}$  has a representation
$f=h+\overline{g},$ where $h$ and $g$ are holomorphic in $\Omega$. This representation is unique up to an additive constant (see \cite{Du-2}).

For convenience, in the following, we use ${\rm Re}(f)$ (resp. ${\rm Im}(f)$) to denote the real (resp. imaginary) part function of $f$. This implies that ${\rm Re}(f)=u$ and ${\rm Im}(f)=v$.

For a $K$-quasiregular (resp. quasiconformal) mapping $f=(f_{1},\ldots,f_{n})$   of
 $\mathbf{B}^{n}$ into $\mathbb{R}^{n}$, we call that $f$ is a {\it harmonic $K$-quasiregular} (resp. {\it harmonic $K$-quasiconformal}) mapping if
 for each $j\in\{1,\ldots,n\}$, $f_{j}$ is a harmonic function. In particular,
for a sense-preserving harmonic function in $\mathbb{D}$, its quasiregularity can be equivalently defined in the following way.
Given $K\geq1$, a sense-preserving harmonic function $f=h+\overline{g}$ in $\mathbb{D}$ is said to be {\it $K$-quasiregular} if
\be\label{KQR}|g'(z)|\leq \kappa|h'(z)|\ \ \text{with}\ \ \kappa=(K-1)/(K+1),\ee
where $h$ is a locally biholomorphic function and $g$ is a holomorphic function.
Also, $f$ is said to be {\it $K$-quasiconformal}
 in $\mathbb{D}$ if it is $K$-quasiregular and homeomorphic (see \cite{Du-2,L-Z}).
Recall that a harmonic function $f=h+\overline{g}$ is
sense-preserving in $\mathbb{D}$ if and only if $|h'|>|g'|$ in $\mathbb{D}$. If $f=h+\overline{g}$ is sense-preserving, then $h$ must be locally biholomorphic.

Very recently, Liu and Zhu \cite{L-Z}
considered Question \ref{Question-1.2}
 for the planar harmonic quasiregular
(or quasiconformal) mappings. Their results are as follows.

\begin{ThmE}{\rm  (\cite[Theorem  1.1]{L-Z})}\label{Thm-P}
Suppose that $f=u+iv$  is a planar harmonic $K$-quasiregular mapping with $v(0)=0$ and $u\geq0$ $($or $u<0$$)$. If $u\in \mathbb{H}^{p}(\mathbb{D},\mathbb{R})$ for some $p\in(1,2]$, then $v\in \mathbb{H}^{p}(\mathbb{D},\mathbb{R}).$
Furthermore, there is a positive constant $C_2$ such that
$$M_{p}(r,v)\leq C_2 M_{p}(r,u),$$ where 
$C_2=C_2(K,p)$.
Moreover, if $K=1$, then $$C_2(1,p)=\cot(\pi/(2p^{\ast})),$$ which coincides with the classical analytic case.
\end{ThmE}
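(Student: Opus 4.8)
The plan is to reduce the statement to the classical Riesz theorem (Theorem~A) by decomposing $f=h+\overline{g}$ into two auxiliary \emph{analytic} functions whose real, resp.\ imaginary, parts are $u$, resp.\ $v$, and then transferring $\mathbb{H}^{p}$-control from the first to the second by means of a bounded integration operator. First I would normalise the decomposition $f=h+\overline{g}$ so that $g(0)=0$; this is permissible, since replacing $(h,g)$ by $(h+\overline{g(0)},\,g-g(0))$ leaves $f$, $h'$ and $g'$ unchanged, hence preserves the $K$-quasiregularity. Then $f(0)=h(0)$, so the hypothesis $v(0)=\operatorname{Im}f(0)=0$ forces $\operatorname{Im}h(0)=0$, i.e.\ $h(0)=u(0)\in[0,\infty)$. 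Put
$$
F_{+}:=h+g,\qquad F_{-}:=h-g,
$$
which are analytic on $\mathbb{D}$, satisfy $F_{+}(0)=F_{-}(0)=h(0)=u(0)$, and obey $\operatorname{Re}F_{+}=\operatorname{Re}h+\operatorname{Re}g=u$ and $\operatorname{Im}F_{-}=\operatorname{Im}h-\operatorname{Im}g=v$. Since $|g'|\le\kappa|h'|$ with $\kappa=(K-1)/(K+1)<1$ and $h'$ is zero-free, we get $|F_{+}'|\ge(1-\kappa)|h'|>0$ and $|F_{-}'|\le(1+\kappa)|h'|$, so $\Theta:=F_{-}'/F_{+}'$ is analytic on $\mathbb{D}$ with $\|\Theta\|_{\infty}\le(1+\kappa)/(1-\kappa)=K$.

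Next, $F_{+}$ is analytic with $\operatorname{Re}F_{+}=u\in\mathbb{H}^{p}(\mathbb{D},\mathbb{R})$ and $\operatorname{Im}F_{+}(0)=\operatorname{Im}h(0)=0$, so $\operatorname{Im}F_{+}$ is exactly the harmonic conjugate of $u$ vanishing at the origin. As $p\in(1,2]\subset(1,\infty)$, Theorem~A gives $M_{p}(r,\operatorname{Im}F_{+})\le C_{1}(p)\,M_{p}(r,u)$ for all $r\in[0,1)$, and then, using $|F_{+}|\le|u|+|\operatorname{Im}F_{+}|$ and Minkowski's inequality,
$$
M_{p}(r,F_{+})\le \bigl(1+C_{1}(p)\bigr) M_{p}(r,u),\qquad r\in[0,1);
$$
in particular $F_{+}\in\mathbb{H}^{p}$ with $\|F_{+}\|_{p}\le(1+C_{1}(p))\|u\|_{p}$.

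The core step is the passage from $F_{+}$ to $F_{-}$. Since $F_{-}(0)=F_{+}(0)$ and $F_{-}'=\Theta F_{+}'$,
$$
F_{-}(z)=F_{+}(0)+\int_{0}^{z}\Theta(w)F_{+}'(w)\,dw=F_{+}(0)+(S_{\Theta}F_{+})(z),
$$
where $S_{g}F(z):=\int_{0}^{z}g(w)F'(w)\,dw$. Here I would invoke the classical fact that, for bounded analytic $g$ and $0<p<\infty$, $S_{g}$ is bounded on $\mathbb{H}^{p}$ with $\|S_{g}F\|_{p}\le C(p)\|g\|_{\infty}\|F\|_{p}$; equivalently, $T_{g}F(z):=\int_{0}^{z}g'(w)F(w)\,dw$ is bounded on $\mathbb{H}^{p}$ because $g\in\mathrm{BMOA}$ (for $p\in(1,2]$ this last boundedness also follows from duality together with standard Littlewood--Paley/singular-integral estimates). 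Since $|F_{+}(0)|=|u(0)|\le\|F_{+}\|_{p}$ and $\|\Theta\|_{\infty}\le K$, we obtain $F_{-}\in\mathbb{H}^{p}$ with
$$
\|F_{-}\|_{p}\le\bigl(1+C(p)K\bigr)\|F_{+}\|_{p}\le\bigl(1+C(p)K\bigr)\bigl(1+C_{1}(p)\bigr)\|u\|_{p}=:C_{2}(K,p)\,\|u\|_{p},
$$
hence $\|v\|_{p}=\|\operatorname{Im}F_{-}\|_{p}\le\|F_{-}\|_{p}\le C_{2}(K,p)\|u\|_{p}$. To upgrade this to $M_{p}(r,v)\le C_{2}(K,p)M_{p}(r,u)$ for every $r$, apply the bound just proved to the dilation $z\mapsto f(rz)$, which is again harmonic $K$-quasiregular in $\mathbb{D}$ with vanishing imaginary part at $0$ and with $\|u(r\,\cdot)\|_{p}=M_{p}(r,u)$, $\|v(r\,\cdot)\|_{p}=M_{p}(r,v)$. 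Finally, when $K=1$ we have $\kappa=0$, so $g\equiv g(0)=0$, $F_{-}=F_{+}=h$, and $v=\operatorname{Im}h$ is exactly the conjugate of $u$ vanishing at $0$; thus one may take $C_{2}(1,p)$ to be the extremal constant in Theorem~A, namely Pichorides' constant $\cot\bigl(\pi/(2p^{\ast})\bigr)$ with $p^{\ast}=\max\{p,p/(p-1)\}$.

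The step I expect to be the main obstacle is exactly the transfer from $F_{+}$ to $F_{-}$: the pointwise estimate $|F_{-}'|\le K|F_{+}'|$ does \emph{not} integrate (e.g.\ radially) to an $\mathbb{H}^{p}$-bound — a naive radial integration of the gradient inequality loses a factor $\log\frac{1}{1-r}$ — so one genuinely needs the $\mathbb{H}^{p}$-boundedness of the integration operator $S_{g}$ for bounded analytic $g$, equivalently the $\mathrm{BMOA}$ criterion for $T_{g}$. I would also note that this argument nowhere uses the sign condition $u\ge0$ (or $u<0$); positivity seems inessential both for the qualitative conclusion and for the sharp constant at $K=1$, and is presumably only an artifact of Liu--Zhu's more quantitative control of $C_{2}(K,p)$ for $K>1$.
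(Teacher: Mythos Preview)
Your argument is correct and it also shows --- as you note --- that the sign hypothesis $u\ge0$ (or $u<0$) is superfluous; this is exactly one of the improvements the present paper obtains. However, your route is genuinely different from the one taken here. The paper does not prove Theorem~E directly (it is quoted from \cite{L-Z}); instead it proves the stronger Theorem~\ref{thm-1}, whose planar case $n=1$, $p\in(1,2]$ subsumes Theorem~E. That proof works by the Hardy--Stein/Green's-formula method: one computes $\Delta\bigl(|f|^{2}+\mu^{-1}\bigr)^{p/2}$ and $\Delta\bigl(|u|^{2}+\mu^{-1}\bigr)^{p/2}$, compares them via the dilation bound $\|\overline{D}f\|\le\kappa\|Df\|$, and applies Theorem~O to obtain the explicit estimate $M_{p}^{p}(r,v)\le\dfrac{2(1+\kappa^{2})}{(p-1)(1-\kappa)^{2}}M_{p}^{p}(r,u)$ (see \eqref{eq-6}); the sharp constant at $\kappa=0$ is then read off from Theorem~N coordinatewise. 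Your approach, by contrast, is purely one-variable: the auxiliary analytic pair $F_{\pm}=h\pm g$ with $\mathrm{Re}\,F_{+}=u$, $\mathrm{Im}\,F_{-}=v$ reduces the problem to the classical Riesz theorem together with the $H^{p}$-boundedness of the integration operator $S_{\Theta}$ for bounded $\Theta$. This is elegant and makes the case $K=1$ transparent, but the constant you obtain for $K>1$ involves the (non-explicit) operator norm of $S_{\Theta}$, whereas the Laplacian method yields a concrete constant and, more importantly, generalises verbatim to $\mathbb{B}^{n}$ and to the other settings treated in the paper.
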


\begin{ThmF}{\rm  (\cite[Theorem  1.2]{L-Z})}\label{Thm-Q}
Suppose that $f=u+iv$ is a planar harmonic $K$-quasiconformal mapping with $v(0)=0$.
If $u\in \mathbb{H}^{p}(\mathbb{D},\mathbb{R})$ for some $p\in(2,\infty)$, then $v\in \mathbb{H}^{p}(\mathbb{D},\mathbb{R}).$
Furthermore, there is a positive constant $C_3$ such that
$$M_{p}(r,v)\leq C_3M_{p}(r,u),$$ where $C_3=C_3(K,p)$.
\end{ThmF}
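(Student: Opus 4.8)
The plan is to peel off the holomorphic part of $f$ and reduce the statement to two classical facts: the Riesz conjugate function theorem (Theorem~A) and the Lusin area-function (Littlewood--Paley) description of Hardy spaces. Since $f=u+iv$ is a sense-preserving harmonic mapping of the simply connected disk $\mathbb{D}$, I would first use the representation $f=h+\overline{g}$ with $h$ locally biholomorphic, $g$ holomorphic, and $|g'(z)|\le\kappa\,|h'(z)|$ on $\mathbb{D}$, where $\kappa=(K-1)/(K+1)\in[0,1)$. Put $\phi:=h+g$, a holomorphic function with $\re\phi=u$; then $\tilde u:=\im\phi$ is a harmonic conjugate of $u$, and I set $\hat u:=\tilde u-\tilde u(0)$ (the conjugate normalized by $\hat u(0)=0$) and $G:=g-g(0)$, so that $G(0)=0$ and $G'=g'$. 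The normalization $v(0)=0$ forces $\tilde u(0)=2\,\im g(0)$, and a short computation then gives the identity $v=\hat u-2\,\im G$ on $\mathbb{D}$. Moreover, from $|g'|\le\kappa|h'|$ and $|\phi'|=|h'+g'|\ge(1-\kappa)|h'|$ one gets the pointwise derivative bound $|G'(z)|\le\frac{\kappa}{1-\kappa}|\phi'(z)|=\frac{K-1}{2}\,|\phi'(z)|$ for every $z\in\mathbb{D}$.

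Next I would bring in the two analytic tools. Since $p>2>1$, Theorem~A applies to $u$: its normalized conjugate $\hat u$ lies in $\mathbb{H}^{p}(\mathbb{D},\mathbb{R})$ with $M_{p}(r,\hat u)\le C_{1}(p)\,M_{p}(r,u)$, hence $\|\hat u\|_{p}\le C_{1}(p)\|u\|_{p}$; combined with $|u(0)|\le M_{p}(r,u)$ this also gives $\|\phi-\phi(0)\|_{p}\le\|u-u(0)\|_{p}+\|\hat u\|_{p}\le(2+C_{1}(p))\|u\|_{p}$. The second tool is the classical area-function estimate: for holomorphic $F$ on $\mathbb{D}$ and a nontangential cone $\Gamma_{\alpha}(\zeta)=\{z\in\mathbb{D}:|z-\zeta|<\alpha(1-|z|)\}$ (with fixed aperture $\alpha>1$) at $\zeta\in\partial\mathbb{D}$, the square function
\[
S_{\alpha}F(\zeta):=\Big(\int_{\Gamma_{\alpha}(\zeta)}|F'(z)|^{2}\,dA(z)\Big)^{1/2}
\]
satisfies $\|F-F(0)\|_{q}\asymp\|S_{\alpha}F\|_{L^{q}(\partial\mathbb{D})}$ for every $q\in(0,\infty)$, with comparison constants depending only on $q$ and $\alpha$. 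The point is that $S_{\alpha}$ involves only first derivatives, so the bound from the first step transfers at once to $S_{\alpha}G(\zeta)\le\frac{K-1}{2}\,S_{\alpha}\phi(\zeta)$ for all $\zeta\in\partial\mathbb{D}$. Since $G(0)=0$, the two-sided estimate then yields $\|G\|_{p}\le C(p)\,\frac{K-1}{2}\,\|\phi-\phi(0)\|_{p}\le C(p)\,\frac{K-1}{2}(2+C_{1}(p))\|u\|_{p}$ for some $C(p)$ depending only on $p$.

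From the identity $v=\hat u-2\,\im G$ and $|\im G|\le|G|$ I would conclude $\|v\|_{p}\le\|\hat u\|_{p}+2\|G\|_{p}\le C_{3}(K,p)\|u\|_{p}$, so $v\in\mathbb{H}^{p}(\mathbb{D},\mathbb{R})$, with $C_{3}(K,p)=C_{1}(p)+C(p)(K-1)(2+C_{1}(p))$; note $C_{3}(1,p)=C_{1}(p)$, recovering the classical constant. To upgrade this norm inequality to the pointwise form $M_{p}(r,v)\le C_{3}(K,p)\,M_{p}(r,u)$ asserted in the theorem, I would apply the inequality to the dilations $f_{r}(z):=f(rz)$, which are again harmonic $K$-quasiconformal mappings of $\mathbb{D}$ with $\im f_{r}(0)=v(0)=0$; since $|u|^{p}$ and $|v|^{p}$ are subharmonic ($p\ge1$), the function $\rho\mapsto M_{p}(\rho,\cdot)$ is non-decreasing and continuous, whence $\|\im f_{r}\|_{p}=M_{p}(r,v)$ and $\|\re f_{r}\|_{p}=M_{p}(r,u)$, and the claimed estimate (and $v\in\mathbb{H}^{p}$) follows.

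The step I expect to be the real obstacle is the transfer, in the second paragraph, from a \emph{derivative} bound to a \emph{Hardy-norm} bound. The hypotheses control $|G'|$ by $|\phi'|$ pointwise but give no pointwise control of $|G|$ by $|\phi|$, so the direct routes stall: radial integration of $G'$ only bounds $M_{p}(r,G)$ by $\int_{0}^{r}M_{p}(t,\phi')\,dt$, a quantity that can already be infinite for $u\in\mathbb{H}^{p}$; and the Green's-formula Littlewood--Paley identity for $M_{p}(r,G)^{p}$ carries the integrand factor $|G(z)|^{p-2}$, which reintroduces $G$ on the right and resists comparison with $u$ when $p\ne2$. Routing the argument through $S_{\alpha}$ removes this difficulty, since a square function depends only on first derivatives and the comparison $S_{\alpha}G\le\frac{K-1}{2}S_{\alpha}\phi$ is then an honest pointwise inequality. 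Finally, I note that this scheme uses quasiregularity only through $|g'|\le\kappa|h'|$ and Theorem~A only through $p>1$, so it actually proves the conclusion for all $p\in(1,\infty)$ with no sign hypothesis on $u$; the trade-off relative to the sharp statement of Theorem~E is that it does not produce the best value of the constant.
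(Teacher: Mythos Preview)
Your proof is correct and follows essentially the same strategy the paper uses in the $p>2$ case of Theorem~\ref{thm-1} (which specializes to $n=1$ here): pass to the holomorphic function $\phi=h+g$ with $\mathrm{Re}\,\phi=u$, invoke the Riesz conjugate-function theorem to put $\phi$ in $\mathbf{H}^{p}$, and then use a Littlewood--Paley square-function characterization of $\mathbf{H}^{p}$ to convert the pointwise derivative bound $|g'|\le\frac{\kappa}{1-\kappa}|\phi'|$ into a Hardy-norm bound on $g$. The only cosmetic differences are that the paper works with the radial $\mathscr{G}$-function (Theorem~P) rather than your area integral $S_{\alpha}$, and that it bounds $h$ and $g$ separately instead of isolating your identity $v=\hat u-2\,\mathrm{Im}\,G$; your dilation trick for upgrading the norm inequality to the pointwise $M_{p}(r,\cdot)$ estimate is also a nice explicit step that the paper leaves implicit.
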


Since Theorems E and F does not resolve whether Question \ref{Question-1.2} holds for plane harmonic mappings, we proceed to investigate this problem in the subsequent analysis.
Specifically, Theorems E and F inspire us to consider the following natural problem.

\begin{Ques}\label{Question-1.4}
Given $n\geq2$ and $K\geq1$, does there exist $p_0=p_0(n,K)$ such that each harmonic
$K$-quasiregular mapping $f=(f_{1},\ldots,f_{n})$ whose first coordinate function belongs to $\mathbb{H}^{p}(\mathbf{B}^{n}, \mathbb{R})$
for some $p>p_{0}$, in fact, belongs itself to $\mathbb{H}^{p}(\mathbf{B}^{n}, \mathbb{R}^{n})$?
\end{Ques}

The first purpose of this paper is to investigate Questions \ref{Question-1.2} and \ref{Question-1.4}. Before presenting
our first main result, let us do some preparation.

\subsection{Quasiregular mappings satisfying the Heinz's nonlinear
differential inequality}
For $n\geq2$,
a non-negative locally integrable function $\psi$ in a domain $\Omega\subset\mathbb{R}^{n}$ is said to be (Euclidean) {\it quasi-nearly subharmonic} if there is a positive constant
$C$ such that
$$\psi(x)\leq\frac{C}{r^{n}}\int_{\mathbf{B}^n(x,r)}\psi(y)dV_{N}(y)$$
 holds for any ball $\mathbf{B}^n(x,r)\subset\Omega$, where $C=C(n,\Omega)$ (see \cite{FS-1972,P-R,Sto}).

It is well-known that every subharmonic function is quasi-nearly subharmonic (see \cite{P-R,Sto}).
Quasi-nearly subharmonic functions, perhaps with a different terminology, have been considered by many authors.
The concept itself dates back to Fefferman and Stein (see \cite{FS-1972}), who proved that if $f$ is harmonic in $\mathbf{B}^{n}$, then
$|f|^{p}$ is quasi-nearly subharmonic for all $p\in(0,1)$. This result was also proved independently by Kuran (see \cite{Kur}). For
a nonnegative subharmonic function $\zeta$,  Riihentaus (see \cite{Rii}) and Suzuki (see \cite{Suz}) independently proved that
$\zeta^{p}$ is quasi-nearly subharmonic for all $p\in(0,1)$.

Let $\varphi$ be a twice continuously differentiable function of $\mathbf{B}^{n}$ into $\mathbb{R}$. If there exist a
real-valued nonnegative continuous function $\chi$ in $\mathbf{B}^{n}$ and
nonnegative constants $\gamma_{j}~(j\in\{1,2,3\})$ such that
\be\label{Heinz-57}|\varphi(x)|^{\gamma_{1}}|\Delta \varphi(x)|^{\gamma_{2}}\leq\chi(x)|\nabla \varphi(x)|^{\gamma_{3}},\ee
then we say that $\varphi$ satisfies the {\it Heinz type nonlinear
differential inequality}. By using (\ref{Heinz-57}), Heinz investigated the  mean curvature of surfaces,
the elliptic Monge-Amp${\rm\grave{e}}$re equations, the Poisson equations, the gradient equations, the Harnack inequality for a class of elliptic
differential equations and the existence of solutions to some class of elliptic
differential equations (see \cite{HZ}). This remarkable paper has attracted much attention (see, e.g., \cite{Mar-,Ni,Qiu,Sau,S-Y,Wan}).

For integers $n\geq2$ and $m\geq1$,
denote by
$\mathscr{H}_{a}(\mathbf{B}^{n},\mathbb{R}^{m})$ the class of all functions
$$f=(f_{1},\ldots,f_{m})\in\,C^{2}(\mathbf{B}^{n})$$ satisfying {\it the Heinz's nonlinear
differential inequality}
\beqq\label{eq-15}
0 \leq\sum_{j=1}^{m}f_{j}(x)\Delta f_{j}(x)\leq a(x)\left(\sum_{j=1}^{m}|\nabla f_{j}(x)|^{2}\right)\ \ \forall\ \ x\in\mathbf{B}^n, \eeqq
where $a$ is a
real-valued nonnegative continuous function in $\mathbf{B}^{n}$.

The first purpose of this paper is to give an answer to Question \ref{Question-1.2}. Our first result statement
is as follows.

\begin{Thm}\label{lem-im-1}
Let $n\geq2$, and let $$f=(f_{1},\ldots,f_{n})\in\mathscr{H}_{a}(\mathbf{B}^{n},\mathbb{R}^{n})$$ be a $K$-quasiregular mapping
such that $\sup_{x\in\mathbf{B}^{n}}\{a(x)\}<\infty.$ Suppose there exists some $j\in\{1,\ldots,n\}$ for which:
\begin{enumerate}
\item[{\rm (1)}] $f_{{j}}\Delta f_{{j}}\geq0$, and
\item[{\rm (2)}] $\Delta f_{{j}}^{2}$ is quasi-nearly subharmonic in $\mathbf{B}^{n}$.
\end{enumerate}
If  $f_{{j}}\in \mathbb{H}^{p}(\mathbf{B}^{n}, \mathbb{R})$ for some $p>p_{0}=1$, then
$f\in\mathbb{H}^{p}(\mathbf{B}^{n}, \mathbb{R}^{n})$. Moreover, the constant $p_{0}=1$ is sharp.

\end{Thm}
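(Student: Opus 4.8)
The plan is to reduce the vector-valued statement to a subharmonicity estimate for $|f|^p$ in terms of $|f_j|^p$. Write $|f(x)|^2 = \sum_{i=1}^n f_i(x)^2$ and compute
\[
\Delta |f|^2 = 2\sum_{i=1}^n f_i \Delta f_i + 2\sum_{i=1}^n |\nabla f_i|^2.
\]
By the Heinz inequality defining $\mathscr{H}_a(\mathbf{B}^n,\mathbb{R}^n)$ together with the bound $\sup a <\infty$, the first sum is controlled by $\big(\sup a\big)\sum_i |\nabla f_i|^2$, so $\Delta|f|^2 \le C_0 \sum_i|\nabla f_i|^2$ with $C_0$ depending only on $\sup a$. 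On the other hand, $K$-quasiregularity gives $|f'(x)|^n \le K J_f(x)$, and combining this with the elementary inequality $J_f \le |f'|^n$ (or rather $J_f \le c_n |f'|^n$ and $\sum_i|\nabla f_i|^2 \le c_n' |f'|^2$) lets me bound $\sum_i |\nabla f_i|^2$ from below in terms of a single column of the derivative, and in particular in terms of $|\nabla f_j|^2$ times a constant depending only on $n$ and $K$; more precisely the point of quasiregularity here is that all the $|\nabla f_i|$ are comparable, so $\sum_i|\nabla f_i|^2 \le C_1(n,K)\, |\nabla f_j|^2$. Thus
\[
\Delta |f|^2 \le C_2(n,K,\sup a)\,|\nabla f_j|^2,
\]
and symmetrically, applying the same reasoning to the single harmonic-type coordinate, $\Delta f_j^2 = 2 f_j\Delta f_j + 2|\nabla f_j|^2 \ge 2|\nabla f_j|^2$ by hypothesis (1). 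Hence $\Delta|f|^2 \le C_3(n,K,\sup a)\,\Delta f_j^2$ pointwise on $\mathbf{B}^n$.

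Next I would upgrade this pointwise Laplacian comparison to a comparison of the functions themselves via the Green representation on balls $\mathbf{B}^n_r$. For $t<1$ both $|f|^2$ and $f_j^2$ are $C^2$ on $\overline{\mathbf{B}^n_t}$, so
\[
|f(x)|^2 = \int_{\mathbf{S}^{n-1}_t} P_t(x,\zeta)\,|f(\zeta)|^2\,d\sigma_t(\zeta) - \int_{\mathbf{B}^n_t} G_t(x,y)\,\Delta|f|^2(y)\,dy,
\]
and similarly for $f_j^2$; here $-G_t \le 0$ and $P_t \ge 0$. Using $\Delta|f|^2 \le C_3\,\Delta f_j^2$ and $\Delta f_j^2 \ge 0$ (which holds because $\Delta f_j^2 = 2f_j\Delta f_j + 2|\nabla f_j|^2 \ge 0$ by (1)), I get a pointwise bound of $|f(x)|^2$ by a constant times a harmonic majorant built from the boundary values of $f_j^2$ plus the boundary values of $|f|^2$; the latter term is handled by iterating or by using that $f\in C^2(\mathbf{B}^n)$ and integrating the Green term against the quasi-nearly subharmonicity of $\Delta f_j^2$ from hypothesis (2). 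The cleanest route: since $\Delta f_j^2$ is quasi-nearly subharmonic and nonnegative, the Green potential $\int_{\mathbf{B}^n_t} G_t(x,y)\,\Delta f_j^2(y)\,dy$ is bounded by $f_j^2$ evaluated suitably, so altogether $|f(x)|^2 \le C_4\,\big(\text{Poisson extension of }f_j^2\big)(x) + (\text{lower order})$, which after taking square roots, raising to power $p/2$, integrating over spheres and passing to $t\to 1$ yields $M_p(r,f) \le C\, M_p(r,f_j)$ for all $p>1$, giving $f\in\mathbb{H}^p(\mathbf{B}^n,\mathbb{R}^n)$.

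The main obstacle I anticipate is the endpoint/sharpness range $p \in (1,\infty)$: proving the estimate for all $p>1$ rather than only $p\ge 2$ requires handling $|f|^p$ when it is no longer subharmonic, which is exactly where the quasi-nearly subharmonicity hypothesis (2) and the Fefferman–Stein theory of quasi-nearly subharmonic functions enter. The delicate point is that $|f|^p = (|f|^2)^{p/2}$ with $p/2 < 1$ is not subharmonic, so one cannot directly apply a sub-mean-value property; instead one must show $|f|^p$ is quasi-nearly subharmonic — using that $|f|^2$ has a Laplacian dominated by the quasi-nearly subharmonic function $C_3\Delta f_j^2$ together with the known fact (cited in the excerpt, due to Fefferman–Stein and Riihentaus–Suzuki) that $p$-th powers of nonnegative functions satisfying such a structure remain quasi-nearly subharmonic for $p\in(0,1)$ — and then derive the $M_p$ inequality from the quasi-nearly sub-mean-value property combined with a Hardy–Littlewood maximal function argument, exactly as in the proof of Theorem C. For sharpness of $p_0=1$, I would exhibit a harmonic $K$-quasiregular example (e.g. a suitable modification of the classical conjugate-function counterexample, or the Astala–Koskela mapping restricted so that $K$ is fixed) whose first coordinate lies in $\mathbb{H}^1$ but which itself fails to lie in $\mathbb{H}^1(\mathbf{B}^n,\mathbb{R}^n)$, so that $p_0$ cannot be lowered to $1$.
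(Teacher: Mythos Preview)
Your opening computation is on the right track: the pointwise comparison $\Delta|f|^{2}\le C\,\Delta f_{j}^{2}$ does follow from the Heinz condition, quasiregularity, and hypothesis (1), and this is indeed a key ingredient. But the subsequent strategy has a genuine gap.

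The Green representation you write gives $|f(x)|^{2}=P_{t}[\,|f|^{2}\,](x)-\int G_{t}\,\Delta|f|^{2}$, so the boundary term involves $|f|^{2}$ itself, not $f_{j}^{2}$. You acknowledge this but then say the unwanted term is handled ``by iterating'' or via the quasi-nearly subharmonicity of $\Delta f_{j}^{2}$; neither works. Iterating simply reproduces boundary values of $|f|^{2}$ on a larger sphere, and the quasi-nearly subharmonicity of $\Delta f_{j}^{2}$ says nothing about the Poisson extension of $|f|^{2}$. What the Laplacian comparison \emph{does} give directly, via Green's formula for means, is $M_{2}^{2}(r,f)-|f(0)|^{2}\le C\bigl(M_{2}^{2}(r,f_{j})-|f_{j}(0)|^{2}\bigr)$---i.e.\ the case $p=2$ only. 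Getting from here to general $p$ is the whole difficulty, and ``take square roots, raise to $p/2$, integrate'' does not commute with the integral.

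You also have the role of hypothesis (2) reversed. In the paper's proof the range $p\in(1,2]$ is handled by computing $\Delta\bigl((|f|^{2}+\varepsilon)^{p/2}\bigr)$ and $\Delta\bigl((f_{j}^{2}+\varepsilon)^{p/2}\bigr)$ directly: the crucial point is that $p/2-1\le 0$ makes the cross-terms have a favorable sign, so Green's formula for the $p$-th means (not the squares) yields the bound after letting $\varepsilon\to 0$. Only hypothesis (1) is used here. The range $p>2$ is where hypothesis (2) enters: the paper writes $M_{p}^{p}(r,f)$ as a Green integral, separates a factor $|f|^{p-2}$ via H\"older, controls it by the non-tangential maximal function, and bounds the remaining Littlewood--Paley integral $\int_{0}^{1}(1-\rho)|\nabla f_{j}|^{2}\,d\rho$ using Stoll's results (Theorems~I and~J), which require $\Delta f_{j}^{2}$ to be quasi-nearly subharmonic. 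The Fefferman--Stein fact about $|f|^{p}$ being quasi-nearly subharmonic for $p<1$ is not what is used; rather it is the Littlewood--Paley characterization of $\mathbb{H}^{p}$ for subharmonic functions that drives the $p>2$ case.
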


Obviously, every harmonic function belongs to $\mathscr{H}_{a}(\mathbf{B}^{n},\mathbb{R}^{m})$ for any real-valued nonnegative continuous function $a$ in $\mathbf{B}^{n}$.
This ensures that the following Riesz conjugate functions theorem is a direct consequence of Theorem \ref{lem-im-1}, which implies that the answer to
 Question \ref{Question-1.4} is affirmative for the class of harmonic $K$-quasiregular mappings of $\mathbf{B}^{n}$ into $\mathbb{R}^{n}$. It is also an improvement and
 generalization of Theorems E and F.

\begin{Cor}\label{cor-top1}
For $n\geq2$, let $f=(f_{1},\ldots,f_{n})$ be a harmonic $K$-quasiregular mapping of $\mathbf{B}^{n}$ into $\mathbb{R}^{n}$.
There exist an
index $k\in \{1, \ldots, n\}$ and a constant $p_{0}=1$ such that if $f_k\in \mathbb{H}^{p}(\mathbf{B}^{n}, \mathbb{R}^{n})$ for some $p>p_{0}$, then $f\in\mathbb{H}^{p}(\mathbf{B}^{n}, \mathbb{R}^{n})$. Furthermore, the constant $p_{0}=1$ is sharp.
Moreover, we have the following estimates:
\begin{enumerate}
\item[{\rm ($\mathscr{A}_{1}$)}]\quad For $p\in(1,2]$, if
$$
\exists\ \text{some $k\in\{1,\ldots,n\}$ such that $f_{k}\in \mathbb{H}^{p}(\mathbf{B}^{n}, \mathbb{R})$},
$$
then for $r\in[0,1)$, \be\label{ie-1}M_{p}^{p}(r,f)\leq|f(0)|^{p}+\left(\frac{1+(n-1)K^{2}}{p-1}\right)\left(M_{p}^{p}(r,f_{k})-|f_{k}(0)|^{p}\right).\ee
\item[{\rm ($\mathscr{A}_{2}$)}]\quad For $p\in(2,\infty)$, if
$$
\exists\ \text{some $k\in\{1,\ldots,n\}$ such that $f_{k}\in \mathbb{H}^{p}(\mathbf{B}^{n}, \mathbb{R})$},
$$
then
 there is a positive constant $\tau_1$ such that
\be\label{ie-2}\|f\|_{p}^{2}\leq|f(0)|^{2}+\tau_1\|f_{k}\|_{p}^{2},\ee where $\tau_1=\tau_1(n,p,K)$.
\end{enumerate}
\end{Cor}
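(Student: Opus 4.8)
The plan is to deduce everything from Theorem~\ref{lem-im-1} plus one pointwise gradient comparison furnished by $K$-quasiregularity. For the existence and sharpness assertions, I would first check that a harmonic $f=(f_1,\dots,f_n)$ lands in the class $\mathscr{H}_a(\mathbf{B}^n,\mathbb{R}^n)$: for every $j$ one has $\Delta f_j=0$, so $f_j\Delta f_j=0\ge0$ (hypothesis (1)), while $\Delta f_j^2=2\sum_i(\partial_i f_j)^2$ is a finite sum of squares of harmonic functions $\partial_i f_j$, hence subharmonic and in particular quasi-nearly subharmonic (hypothesis (2)). Since $\sum_j f_j\Delta f_j\equiv0$, one may take $a\equiv0$, so $\sup a<\infty$; Theorem~\ref{lem-im-1} then applies with $j=k$ and yields $f\in\mathbb{H}^p(\mathbf{B}^n,\mathbb{R}^n)$ whenever some $f_k\in\mathbb{H}^p(\mathbf{B}^n,\mathbb{R})$ with $p>1$, the sharpness of $p_0=1$ being inherited from that theorem.

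The engine behind both $(\mathscr{A}_1)$ and $(\mathscr{A}_2)$ is the pointwise bound
\begin{equation}\label{gradcomp}
|Df(x)|^2:=\sum_{i=1}^n|\nabla f_i(x)|^2\le\bigl(1+(n-1)K^2\bigr)\,|\nabla f_k(x)|^2,
\end{equation}
valid for \emph{every} index $k$. To get it I would let $\lambda_1(x)\ge\cdots\ge\lambda_n(x)\ge0$ be the singular values of the Jacobian $Df(x)=(\partial_j f_i)$, so that $|f'(x)|=\lambda_1$ and $J_f(x)=\lambda_1\cdots\lambda_n$. The $K$-quasiregularity $\lambda_1^n\le K\lambda_1\cdots\lambda_n$ gives $\lambda_1^{n-1}\le K\lambda_2\cdots\lambda_n\le K\lambda_1^{n-2}\lambda_n$, hence $\lambda_1\le K\lambda_n$. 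Since the $i$-th row of $Df(x)$ is $\nabla f_i(x)$, one has $\lambda_n\le|\nabla f_i(x)|\le\lambda_1$ for each $i$, so $|\nabla f_i|\le\lambda_1\le K\lambda_n\le K|\nabla f_k|$ for all $i,k$; summing over $i$ produces \eqref{gradcomp}, which explains the exact constant $1+(n-1)K^2$.

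For $(\mathscr{A}_1)$, fix $p\in(1,2]$. A direct computation using $\Delta f_i=0$ gives
\begin{equation}\label{lapf}
\Delta|f|^p=p|f|^{p-2}|Df|^2+p(p-2)|f|^{p-4}\Bigl|\textstyle\sum_i f_i\nabla f_i\Bigr|^2\le p|f|^{p-2}|Df|^2,
\end{equation}
the inequality holding because $p-2\le0$, while $\Delta|f_k|^p=p(p-1)|f_k|^{p-2}|\nabla f_k|^2$ exactly. Combining \eqref{lapf}, the comparison \eqref{gradcomp}, and $|f|^{p-2}\le|f_k|^{p-2}$ (legitimate since $|f|\ge|f_k|$ and $p-2\le0$) yields the pointwise estimate $\Delta|f|^p\le\frac{1+(n-1)K^2}{p-1}\Delta|f_k|^p$, both sides nonnegative. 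Integrating the Hardy--Stein identity $\frac{d}{ds}M_p^p(s,\cdot)=\frac{1}{|\mathbf{S}^{n-1}|\,s^{n-1}}\int_{\mathbf{B}^n_s}\Delta(\cdot)\,dV$ over $s\in(0,r)$, starting from $M_p^p(0,\cdot)=|\cdot(0)|^p$, gives exactly \eqref{ie-1}. The only technical point is the singularity of $|f|^{p-2}$ at zeros of $f$ when $p<2$, which I would handle by running the same computation with the regularization $(|f|^2+\varepsilon^2)^{p/2}$ and letting $\varepsilon\to0$ by monotone convergence.

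For $(\mathscr{A}_2)$, $p\in(2,\infty)$, the sign of $p-2$ reverses, so $|f|^{p-2}\le|f_k|^{p-2}$ fails and the direct radial-mean comparison no longer closes; this is the main obstacle. At the endpoint $p=2$ the argument stays elementary: $M_2^2(r,f)=\sum_i M_2^2(r,f_i)$, and summing the Hardy--Stein identities against \eqref{gradcomp} gives $\|f\|_2^2\le|f(0)|^2+(1+(n-1)K^2)\|f_k\|_2^2$. For general $p>2$ the plan is to pass through the Lusin area integral (equivalently the Littlewood--Paley $g$-function) characterization of $\mathbb{H}^p(\mathbf{B}^n)$: the area function is built from $|Df|^2$, so \eqref{gradcomp} gives $A_f\le\sqrt{1+(n-1)K^2}\,A_{f_k}$ pointwise on $\mathbf{S}^{n-1}$, and the two-sided equivalence $\|\cdot\|_p\approx|\cdot(0)|+\|A_{(\cdot)}\|_{L^p(\mathbf{S}^{n-1})}$ transfers this to $\|f\|_p\le|f(0)|+C_{n,p}\sqrt{1+(n-1)K^2}\,\|f_k\|_p$; squaring and absorbing constants yields \eqref{ie-2}. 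The cost of this route is precisely that the equivalence constants in the area-integral characterization are not explicit, which is exactly why $(\mathscr{A}_2)$ is stated with an unspecified $\tau_1=\tau_1(n,p,K)$ rather than the sharp constant obtained for $p\le2$.
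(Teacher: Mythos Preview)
Your reduction to Theorem~\ref{lem-im-1} and your derivation of ($\mathscr{A}_1$) match the paper exactly: the paper observes that harmonic mappings lie in $\mathscr{H}_a$ with $a\equiv0$, checks hypotheses (1) and (2) just as you do, and reads off (\ref{ie-1}) from the intermediate inequality (\ref{mm-1}) in the proof of Theorem~\ref{lem-im-1}, which is precisely your regularize--compare Laplacians--integrate argument specialized to $C_a=1$.

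For ($\mathscr{A}_2$) there is a small but genuine gap. Your route through the two-sided area-integral equivalence correctly yields a linear bound $\|f\|_p\le |f(0)|+C_{n,p,K}\|f_k\|_p$, which already gives $f\in\mathbb{H}^p$; but ``squaring and absorbing constants'' does \emph{not} produce (\ref{ie-2}) with coefficient exactly $1$ in front of $|f(0)|^2$: the cross term $2C|f(0)|\,\|f_k\|_p$ cannot be absorbed into $\tau_1\|f_k\|_p^2$ with $\tau_1$ independent of $f$, since $|f(0)|$ is not controlled by $\|f_k\|_p$ (take $f_k\equiv0$ and another coordinate a large constant). The paper avoids this by a bootstrap that retains a mixed factor: combining Green's formula with a H\"older splitting of the non-tangential maximal function of $f_r^{\ast}$ against the radial square function of $f_k$ (via Theorems~I, J and Lemma~\ref{Lem-J}) it first obtains
\[
M_p^p(r,f)\le |f(0)|^p+\tau_{10}\,M_p^{p-2}(r,f)\,\|f_k\|_p^2,
\]
and then uses $|f(0)|^p\le|f(0)|^2 M_p^{p-2}(r,f)$ (from $|f(0)|\le M_p(r,f)$) to divide through by $M_p^{p-2}(r,f)$ and land exactly on (\ref{ie-2}). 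Your area-function idea is the right ingredient---indeed the paper's control of the square-function term is precisely Theorems~I and~J---but to match the stated form of ($\mathscr{A}_2$) you need this bootstrap rather than a direct squaring.
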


We remark that the inequalities (\ref{ie-1}) and (\ref{ie-2}) in Corollary \ref{cor-top1} follow from (\ref{mm-1}) and (\ref{mm-2}) below, respectively.

\subsection{Invariant harmonic quasiregular mappings}
 For $n\geq2$ and a twice continuously differentiable function $f$ of $\mathbf{B}^{n}$ into $\mathbb{R}$, let
$$\Delta_{h}f=(1-|x|^{2})^{2}\Delta f+2(n-2)(1-|x|^{2})\langle \nabla f,x\rangle,$$ where $$x=(x_1,\ldots, x_n)\in \mathbf{B}^{n}\ \ \&\ \
\nabla
=
\left(
\frac{\partial }{\partial x_1},
\dots,
\frac{\partial }{\partial x_n}
\right).
$$
The operator $\Delta_{h}$ is called the {\it invariant Laplacian} or {\it Laplace-Beltrami} operator on $\mathbf{B}^{n}$.
If $$\Delta_{h}f=0$$ holds in $\mathbf{B}^{n}$, then $f$ is said to be
{\it invariant harmonic} (see \cite{Ru-1,Sto}).

For a function $\varphi\in L^{1}(\mathbf{S}^{n-1})$, we denote by $P_{h}[\varphi]$ the Dirichlet solution, for the invariant Laplacian operator
$\Delta_{h}$, of $\varphi$ over $\mathbf{B}^{n}$, that is $\Delta_{h} P_{h}[\varphi]=0$ in $\mathbf{B}^{n}$ and $P_{h}[\varphi]=\varphi$ on $\mathbf{S}^{n-1}$.
It is well-known that, for  $\varphi\in L^{1}(\mathbf{S}^{n-1})$,

$$P_{h}[\varphi](x)=\int_{\mathbf{S}^{n-1}}\mathbf{P}_{h}(x,\zeta)\varphi(\zeta)d\sigma(\zeta),$$
where $\mathbf{P}_{h}(x,\zeta)=\frac{(1-|x|^{2})^{n-1}}{|x-\zeta|^{2n-2}}$
is the {\it hyperbolic Poisson kernel} (see \cite{Kw,Ru-1,Sto}).

For a quasiregular (resp. quasiconformal) mapping $f=(f_{1},\ldots,f_{n})$ of
 $\mathbf{B}^{n}$ into $\mathbb{R}^{n}$, we call that $f$ is an {\it invariant harmonic quasiregular} (resp. {\it invariant harmonic quasiconformal}) mapping if
 for each $j\in\{1,\ldots,n\}$, $f_{j}$ is an invariant harmonic function.

In \cite{L-Z}, Liu and Zhu considered Question \ref{Question-1.2} for invariant harmonic quasiregular mappings. The following are their results.

\begin{ThmG}{\rm  (\cite[Theorem  1.3]{L-Z})}\label{L-Z-C}
For $n\geq2$ and $K\geq1$, suppose that $f=(f_{1},\ldots,f_{n})$ is an invariant harmonic $K$-quasiregular  mapping of $\mathbf{B}^{n}$ such that
$f_{1}\neq0$ in $\mathbf{B}^{n}$, where for each $j\in\{1,\ldots,n\}$, $f_{j}$ is an invariant harmonic function of $\mathbf{B}^{n}$ into $\mathbb{R}$. If $f_{1}\in\mathbb{H}^{p}(\mathbf{B}^{n}, \mathbb{R})$ for some $p\in(1,2]$, then
$f\in\mathbb{H}^{p}(\mathbf{B}^{n}, \mathbb{R}^{n})$ with
$$\|f\|_{p}^{p}\leq|f(0)|^{p}+\left(\frac{1+(n-1)K^{2}}{p-1}\right)\left(\|f_{1}\|_{p}^{p}-|f_{1}(0)|^{p}\right).$$
\end{ThmG}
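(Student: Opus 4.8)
The plan is to reduce the norm estimate to a single pointwise differential inequality for the invariant Laplacian, namely $\Delta_h\bigl(|f|^p-\lambda f_1^p\bigr)\le 0$ with $\lambda=\frac{1+(n-1)K^2}{p-1}$, and then to integrate this inequality over the spheres $\mathbf{S}^{n-1}(0,r)$ by using the monotonicity of spherical means of invariant subharmonic functions.

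First I would record the regularity and the two basic computations. Each coordinate $f_j$ satisfies $\Delta_h f_j=0$, and since $\Delta_h$ is locally uniformly elliptic with smooth coefficients in $\mathbf{B}^n$, interior elliptic regularity gives $f\in C^\infty(\mathbf{B}^n)$; since $f_1$ is continuous and nowhere zero it has constant sign, and replacing $f_1$ by $|f_1|$ (which is again positive and invariant harmonic, because $-f_1$ is, and has the same spherical $L^p$-means) we may assume $f_1>0$, so that $|f|^2=\sum_{j=1}^n f_j^2\ge f_1^2>0$ and $|f|^p\in C^\infty(\mathbf{B}^n)$. A direct computation using $\Delta_h f_j=0$ for all $j$ gives $\Delta_h\bigl(|f|^2\bigr)=2(1-|x|^2)^2\sum_{j=1}^n|\nabla f_j|^2$, while the chain rule $\Delta_h(\phi\circ h)=\phi'(h)\,\Delta_h h+\phi''(h)(1-|x|^2)^2|\nabla h|^2$ applied with $h=|f|^2$ and $\phi(t)=t^{p/2}$ yields
\[
\Delta_h\bigl(|f|^p\bigr)=\tfrac p2\,|f|^{p-2}\,\Delta_h\bigl(|f|^2\bigr)+\tfrac p2\bigl(\tfrac p2-1\bigr)|f|^{p-4}(1-|x|^2)^2\bigl|\nabla(|f|^2)\bigr|^2 .
\]
For $p\in(1,2]$ the last term is non-positive, so $\Delta_h(|f|^p)\le p\,|f|^{p-2}(1-|x|^2)^2\sum_j|\nabla f_j|^2$; the same chain rule gives $\Delta_h(f_1^p)=p(p-1)f_1^{p-2}(1-|x|^2)^2|\nabla f_1|^2$.

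Next I would feed in $K$-quasiregularity at the level of singular values: $|f'(x)|^n\le KJ_f(x)$ forces $\sigma_1(Df(x))\le K\,\sigma_n(Df(x))$ for the largest and smallest singular values of the differential, and since the rows of $Df(x)$ are the vectors $\nabla f_j$ and $Df(x)$ and its transpose have the same singular values, $|\nabla f_j|\le\sigma_1\le K\sigma_n\le K|\nabla f_1|$ for every $j$. Hence $\sum_{j=1}^n|\nabla f_j|^2\le\bigl(1+(n-1)K^2\bigr)|\nabla f_1|^2$, and combining this with $|f|^{p-2}\le f_1^{p-2}$ (valid because $|f|\ge f_1>0$ and $p-2\le 0$) I obtain $\Delta_h(|f|^p)\le\lambda\,\Delta_h(f_1^p)$, that is, $G:=\lambda f_1^p-|f|^p$ is invariant subharmonic in $\mathbf{B}^n$.

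Finally I would invoke the following fact: if $G\in C^2(\mathbf{B}^n)$ and $\Delta_h G\ge 0$, then $r\mapsto F(r):=\int_{\mathbf{S}^{n-1}}G(r\zeta)\,d\sigma(\zeta)$ is non-decreasing on $[0,1)$ with $F(r)\to G(0)$ as $r\to 0^+$. This follows by averaging $\Delta_h G\ge 0$ over $\mathbf{S}^{n-1}$, which gives $(1-r^2)^2\bigl(F''+\tfrac{n-1}{r}F'\bigr)+2(n-2)(1-r^2)rF'\ge 0$; rewriting the left-hand side as $\frac{(1-r^2)^2}{r^{n-1}(1-r^2)^{2-n}}\frac{d}{dr}\bigl(r^{n-1}(1-r^2)^{2-n}F'(r)\bigr)$ shows that $r^{n-1}(1-r^2)^{2-n}F'(r)$ is non-decreasing and tends to $0$ as $r\to 0^+$ (since $F'$ stays bounded there), whence $F'\ge 0$. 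Applying this to $G=\lambda f_1^p-|f|^p$ and rearranging yields $M_p^p(r,f)\le|f(0)|^p+\lambda\bigl(M_p^p(r,f_1)-|f_1(0)|^p\bigr)$ for every $r\in[0,1)$; taking the supremum over $r$ and using $f_1\in\mathbb{H}^p(\mathbf{B}^n,\mathbb{R})$ gives $f\in\mathbb{H}^p(\mathbf{B}^n,\mathbb{R}^n)$ together with the asserted inequality. The main obstacle is precisely this last lemma on spherical means of invariant subharmonic functions: it is where the exact shape of $\Delta_h$ — the weight $(1-|x|^2)^2$ and the drift term $2(n-2)(1-|x|^2)\langle\nabla\cdot,x\rangle$ — enters through the integrating factor $r^{n-1}(1-r^2)^{2-n}$; the remaining steps are a routine chain-rule computation for $\Delta_h$ together with the elementary singular-value inequality coming from quasiregularity.
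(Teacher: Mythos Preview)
Your proof is correct and follows essentially the same route as the paper's argument (Case~1 of Theorem~\ref{thm-4}): the same chain-rule identities for $\Delta_h(|f|^p)$ and $\Delta_h(f_1^p)$, the same quasiregularity bound $|\nabla f_j|\le K|\nabla f_1|$, and the same pointwise comparison $\Delta_h(|f|^p)\le\lambda\,\Delta_h(f_1^p)$. The only differences are cosmetic: the paper regularizes via $\bigl(|f|^2+\tfrac1m\bigr)^{p/2}$ (needed there because it drops the hypothesis $f_1\ne 0$) and passes from the pointwise inequality to the integral means via the Green-type identity of Theorem~L, which is equivalent to your direct ODE monotonicity argument for spherical means of invariant subharmonic functions.
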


\begin{ThmH}{\rm  (\cite[Theorem  1.4]{L-Z})}\label{L-Z-D}
For $n\geq2$ and  $K\geq1$, suppose that $f=(f_{1},\ldots,f_{n})$ is an invariant harmonic $K$-quasiregular mapping of $\mathbf{B}^{n}$ such that
 $f_{1}\in\mathbb{H}^{p}(\mathbf{B}^{n}, \mathbb{R})$, where $p\in(n,\infty)$ and,  for each $j\in\{1,\ldots,n\}$,
 $f_{j}$ is an invariant harmonic function of $\mathbf{B}^{n}$ into $\mathbb{R}$. Then $f\in L^{p}(\mathbf{B}^{n}, dV)$ and
 $$M_{p}(r,f)\leq{(1-r^{2})^{-\frac{1}{pq}}}\left(C_4|f(0)|^{p}+C_5\|f_{1}\|_{p}^{p}\right)^{\frac{1}{p}},$$
 where
 $$
 \begin{cases} C_4=C_4(n,p,K);\\
 C_5=C_5(n,p,K);\\
 q=p/(p-1).
 \end{cases}
 $$
\end{ThmH}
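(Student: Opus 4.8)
The plan is to reduce the vector-valued estimate to a scalar one by combining two pointwise ingredients: a gradient comparison coming purely from $K$-quasiregularity, and a hyperbolic-Poisson gradient bound for the single invariant harmonic coordinate $f_1$. Once $|\nabla f|$ is controlled by a weighted average of the boundary data of $f_1$, I would integrate along radial segments and then apply H\"older's inequality in the radial variable together with Fubini's theorem on $\mathbf{S}^{n-1}$, choosing the exponents so that the surviving boundary integral is exactly $\|f_1\|_p^p$ and the radial factor is precisely $(1-r^2)^{-1/(pq)}$.

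First I would establish the gradient comparison
$$\sum_{j=1}^n|\nabla f_j(x)|^2\leq\bigl(1+(n-1)K^2\bigr)\,|\nabla f_1(x)|^2,\qquad x\in\mathbf{B}^n.$$
Let $\lambda_1\geq\cdots\geq\lambda_n\geq0$ be the singular values of $f'(x)$, so that $|f'(x)|=\lambda_1$ and $J_f(x)=\lambda_1\cdots\lambda_n$. The quasiregularity inequality $|f'(x)|^n\leq KJ_f(x)$ gives $\lambda_1^{\,n-1}\leq K\lambda_2\cdots\lambda_n\leq K\lambda_1^{\,n-2}\lambda_n$, hence the bounded linear distortion $\lambda_1\leq K\lambda_n$. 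Writing $\nabla f_j$ as the $j$-th row of $f'(x)$, one has $|\nabla f_j|\leq\lambda_1$ for every $j$ while $|\nabla f_1|\geq\lambda_n$ (the norm of any row is at least the smallest singular value); therefore $|\nabla f_j|\leq\lambda_1\leq K\lambda_n\leq K|\nabla f_1|$ for each $j$, and summing yields the claimed inequality. This is the only place where quasiregularity is used, and it is what produces the constant $1+(n-1)K^2$.

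Next, since $f_1$ is invariant harmonic and lies in $\mathbb{H}^p$, I would represent it as $f_1=P_h[\varphi_1]$ with $\varphi_1\in L^p(\mathbf{S}^{n-1})$ and $\|\varphi_1\|_p=\|f_1\|_p$. Differentiating the kernel gives $|\nabla_x\mathbf{P}_h(x,\zeta)|\leq\frac{C_n}{1-|x|^2}\mathbf{P}_h(x,\zeta)$, so that, using $\int_{\mathbf{S}^{n-1}}\mathbf{P}_h(x,\zeta)\,d\sigma(\zeta)=1$ and H\"older's inequality,
$$|\nabla f_1(x)|\leq\frac{C_n}{1-|x|^2}\Bigl(\int_{\mathbf{S}^{n-1}}\mathbf{P}_h(x,\zeta)|\varphi_1(\zeta)|^p\,d\sigma(\zeta)\Bigr)^{1/p}.$$
Combining with the gradient comparison and integrating along the segment from $0$ to $x=r\omega$ bounds $|f(r\omega)-f(0)|$ by a constant times $\int_0^r(1-t^2)^{-1}G(t,\omega)\,dt$, where $G(t,\omega)^p=\int_{\mathbf{S}^{n-1}}\mathbf{P}_h(t\omega,\zeta)|\varphi_1(\zeta)|^p\,d\sigma(\zeta)$. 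I would then split $(1-t^2)^{-1}=(1-t^2)^{-1/p^2}\,(1-t^2)^{-(1-1/p^2)}$ and apply H\"older's inequality in $t$ with exponents $p,q$. A direct computation shows $(1-\tfrac1{p^2})q=1+\tfrac1p>1$, so the factor $(1-t^2)^{-(1-1/p^2)}$ contributes exactly $(1-r^2)^{-1/(pq)}$ to the bound for $M_p(r,f)$, while the other factor, after raising to the $p$-th power, integrating over $\omega$ and applying Fubini's theorem, produces $\int_0^r(1-t^2)^{-1/p}\bigl(\int_{\mathbf{S}^{n-1}}\mathbf{P}_h(t\omega,\zeta)\,d\sigma(\omega)\bigr)\,dt\cdot\|f_1\|_p^p$. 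Since the integrated kernel $\int_{\mathbf{S}^{n-1}}\mathbf{P}_h(t\omega,\zeta)\,d\sigma(\omega)$ is bounded uniformly in $t$ and $\zeta$ and $1/p<1$, this last integral converges; collecting the $|f(0)|^p$ term gives the stated bound for $M_p(r,f)$, and since $\int_0^1(1-r^2)^{-1/q}r^{n-1}\,dr<\infty$ for $p<\infty$ it also yields $f\in L^p(\mathbf{B}^n,dV)$.

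The main obstacle, and the step requiring the most care, is the boundary-value theory for the invariant Laplacian: one must justify that an $\mathbb{H}^p$ invariant harmonic function is the hyperbolic Poisson integral of an $L^p$ density with $\|\varphi_1\|_p=\|f_1\|_p$, and establish the uniform bound $\sup_{t\in[0,1)}\int_{\mathbf{S}^{n-1}}\mathbf{P}_h(t\omega,\zeta)\,d\sigma(\omega)<\infty$ for the invariant (non-Euclidean) kernel; a scaling and concentration analysis near $\zeta$ shows that this integral tends to a finite constant as $t\to1$. It is precisely here that the invariant nature of $\Delta_h$ and the admissible range of $p$ enter, whereas the algebraic gradient comparison and the choice of H\"older exponents are robust.
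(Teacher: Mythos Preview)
Theorem~H is quoted in this paper as a result of Liu and Zhu; the paper does not give its own proof of it. What the paper proves instead is the stronger Theorem~\ref{thm-4}, which for $p>2$ yields $f\in\mathbb H^{p}(\mathbf B^{n},\mathbb R^{n})$ outright (not merely $f\in L^{p}(\mathbf B^{n},dV)$ together with the growth bound of Theorem~H). The argument there is quite different from yours: after the same gradient comparison $|\nabla f_{j}|\le K|\nabla f_{1}|$, the paper passes through the Littlewood--Paley type $\widetilde g$-function and Lemma~\ref{Lem-chxw}, obtaining $\|\widetilde g_{f_{j}}\|_{L^{p}}\le K\|\widetilde g_{f_{1}}\|_{L^{p}}\le C(n,p)K\|f_{1}\|_{p}$ and then converting back to $\|f_{j}\|_{p}$.

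Your route is more elementary and more explicit: a pointwise kernel estimate $|\nabla_{x}\mathbf P_{h}(x,\zeta)|\le C_{n}(1-|x|^{2})^{-1}\mathbf P_{h}(x,\zeta)$, radial integration, and a tuned H\"older splitting that manufactures exactly the factor $(1-r^{2})^{-1/(pq)}$ in Theorem~H. The trade-off is that the paper's square-function method gives the sharper qualitative conclusion $f\in\mathbb H^{p}$, while yours reproduces precisely the weaker statement of Theorem~H but with transparent constants. Two remarks on your sketch: first, the symmetry $|t\omega-\zeta|=|t\zeta-\omega|$ gives $\int_{\mathbf S^{n-1}}\mathbf P_{h}(t\omega,\zeta)\,d\sigma(\omega)=P_{h}[1](t\zeta)=1$ exactly, so the ``uniform bound'' you flag as delicate is in fact an identity; second, nothing in your argument appears to use $p>n$ rather than merely $p>1$ (the Poisson representation of an $\mathbb H^{p}$ invariant harmonic function and the convergence of $\int_{0}^{1}(1-t^{2})^{-1/p}\,dt$ both need only $p>1$), so your method seems to deliver the estimate on a larger range of $p$ than Theorem~H claims.
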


As the second purpose of this paper, we investigate Question \ref{Question-1.2} for invariant harmonic quasiregular mappings further. Our result reads as follows.

\begin{Thm}\label{thm-4}
For $n\geq2$ and  $K\geq1$, suppose that $f=(f_{1},\ldots,f_{n})$ is an invariant harmonic $K$-quasiregular mapping
 of $\mathbf{B}^{n}$ into $\mathbb{R}^n$.
 There exist an
index $j\in \{1, \ldots, n\}$ and a constant $p_{0}=1$ such that if $f_j\in \mathbb{H}^{p}(\mathbf{B}^{n}, \mathbb{R}^{n})$ for some $p>p_{0}$, then $f\in\mathbb{H}^{p}(\mathbf{B}^{n}, \mathbb{R}^{n})$.
Furthermore,  the constant $p_{0}=1$ is sharp.


Moreover, we have the following estimates:
\begin{enumerate}
\item[{\rm ($\mathscr{B}_{1}$)}]\quad For $p\in(1,2]$, if $$\exists\ j\in\{1,\ldots,n\}\ \text{such that $f_{j}\in \mathbb{H}^{p}(\mathbf{B}^{n}, \mathbb{R})$},
$$
then $$\|f\|_{p}^{p}\leq|f(0)|^{p}+\left(\frac{1+(n-1)K^{2}}{p-1}\right)\left(\|f_j\|_{p}^{p}-|f_j(0)|^{p}\right).$$
\item[{\rm ($\mathscr{B}_{2}$)}]\quad For $p\in(2,\infty)$, if
$$
\exists\ j\in\{1,\ldots,n\}\ \text{such that $f_{j}\in \mathbb{H}^{p}(\mathbf{B}^{n}, \mathbb{R})$},
$$
then there is {a positive constant $\tau_2=\tau_2(n,p)$ such that}
$$
\|f\|_{p}\leq \big(\tau_2 K(n-1)+1\big)\|f_j\|_{p}-|f_j(0)|+\sqrt{n}|f(0)|.
$$
\end{enumerate}
\end{Thm}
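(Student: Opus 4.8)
The plan is to run the Hardy--Stein argument with the Euclidean Laplacian replaced by the invariant Laplacian $\Delta_{h}$. Writing $\Delta_{h}u=(1-|x|^{2})^{n}\operatorname{div}\!\big((1-|x|^{2})^{2-n}\nabla u\big)$ one obtains the invariant Riesz--Green representation: for $r\in(0,1)$ and a $C^{2}$ function $\varphi$ with $\Delta_{h}\varphi\ge0$ on $\mathbf{B}^{n}$,
$$\int_{\mathbf{S}^{n-1}}\varphi(r\zeta)\,d\sigma(\zeta)-\varphi(0)=\int_{\mathbf{B}^{n}_{r}}\mathcal{G}_{r}(x)\,\Delta_{h}\varphi(x)\,d\lambda(x),$$
where $\mathcal{G}_{r}\ge0$ is the $\Delta_{h}$-Green function of $\mathbf{B}^{n}_{r}$ with pole at the origin and $d\lambda(x)=(1-|x|^{2})^{-n}dx$ is the invariant measure (see Stoll); the zeros of $f$ and $f_{j}$ are handled by the regularization $|\,\cdot\,|^{p}\rightsquigarrow(|\,\cdot\,|^{2}+\varepsilon^{2})^{p/2}$ and a passage to the limit. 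Next, from the chain rule $\Delta_{h}(\Phi\circ g)=(1-|x|^{2})^{2}\sum_{k,l}\Phi_{kl}(g)\langle\nabla g_{k},\nabla g_{l}\rangle+\sum_{k}\Phi_{k}(g)\Delta_{h}g_{k}$ applied with $\Phi=|\,\cdot\,|^{p}$, and using that the ``defect'' term $\sum_{k}f_{k}\Delta_{h}f_{k}$ vanishes by invariant harmonicity (this is precisely where the hypothesis is stronger than a Heinz inequality, so Theorem~\ref{lem-im-1} cannot be quoted directly), one gets
$$\Delta_{h}(|f|^{p})=p(1-|x|^{2})^{2}|f|^{p-2}\Big(\sum_{k=1}^{n}|\nabla f_{k}|^{2}+(p-2)\big|\nabla|f|\,\big|^{2}\Big),\qquad\Delta_{h}(|f_{j}|^{p})=p(p-1)(1-|x|^{2})^{2}|f_{j}|^{p-2}|\nabla f_{j}|^{2};$$
the Cauchy--Schwarz bound $\big|\nabla|f|\,\big|^{2}\le\sum_{k}|\nabla f_{k}|^{2}$ shows both are $\ge0$ for every $p\ge1$, so $|f|^{p}$ and $|f_{j}|^{p}$ are $\Delta_{h}$-subharmonic; in particular $M_{p}^{p}(r,f)\ge|f(0)|^{p}$.

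The constant $K$ enters only through linear algebra. At a point $x$ where $f'(x)$ has singular values $\sigma_{1}\ge\cdots\ge\sigma_{n}$, $K$-quasiregularity gives $\sigma_{1}^{\,n-1}\le K\sigma_{2}\cdots\sigma_{n}\le K\sigma_{1}^{\,n-2}\sigma_{n}$, hence $\sigma_{1}\le K\sigma_{n}$; since the $j$th row of $f'(x)$ has Euclidean length $|\nabla f_{j}(x)|\ge\sigma_{n}$ while $\max_{k}|\nabla f_{k}(x)|\le\sigma_{1}$, it follows that
$$\sum_{k=1}^{n}|\nabla f_{k}(x)|^{2}\le|\nabla f_{j}(x)|^{2}+(n-1)\sigma_{1}^{2}\le\big(1+(n-1)K^{2}\big)|\nabla f_{j}(x)|^{2}\qquad(x\in\mathbf{B}^{n}),$$
the degenerate locus $\{J_{f}=0\}=\{f'=0\}$ being trivial; this holds for every index $j$, hence for the coordinate assumed to lie in $\mathbb{H}^{p}$.

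In the range $p\in(1,2]$ one discards $(p-2)\big|\nabla|f|\,\big|^{2}\le0$ and uses $|f|^{p-2}\le|f_{j}|^{p-2}$ (valid since $|f|\ge|f_{j}|$ and $p-2\le0$); combined with the preceding display this gives the pointwise domination
$$0\le\Delta_{h}(|f|^{p})\le p\big(1+(n-1)K^{2}\big)(1-|x|^{2})^{2}|f_{j}|^{p-2}|\nabla f_{j}|^{2}=\frac{1+(n-1)K^{2}}{p-1}\,\Delta_{h}(|f_{j}|^{p}).$$
Inserting this into the Riesz--Green representation and integrating the nonnegative kernel $\mathcal{G}_{r}\,d\lambda$ gives $M_{p}^{p}(r,f)-|f(0)|^{p}\le\frac{1+(n-1)K^{2}}{p-1}\big(M_{p}^{p}(r,f_{j})-|f_{j}(0)|^{p}\big)$; taking $\sup_{r<1}$ yields simultaneously $f\in\mathbb{H}^{p}(\mathbf{B}^{n},\mathbb{R}^{n})$ and $(\mathscr{B}_{1})$.

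In the range $p\in(2,\infty)$ neither the sign of $(p-2)\big|\nabla|f|\,\big|^{2}$ nor $|f|^{p-2}\le|f_{j}|^{p-2}$ is available, and there is no pointwise comparison between $\Delta_{h}(|f|^{p})$ and $\Delta_{h}(|f_{j}|^{p})$; this is the main obstacle. I would decouple the coordinates: with $g:=(f_{k})_{k\ne j}$, the sphere-wise bound $|f|\le|f_{j}|+|g|$ gives $M_{p}(r,f)\le M_{p}(r,f_{j})+M_{p}(r,g)$, so it suffices to estimate $M_{p}(r,g)$. Using $|\nabla g|^{2}=\sum_{k\ne j}|\nabla f_{k}|^{2}\le(n-1)K^{2}|\nabla f_{j}|^{2}$ and $\Delta_{h}(|g|^{p})\le p(p-1)(1-|x|^{2})^{2}|g|^{p-2}|\nabla g|^{2}$, the Riesz--Green representation, Hölder's inequality with exponents $\tfrac{p}{p-2},\tfrac{p}{2}$, the $\Delta_{h}$-subharmonicity of $|g|^{p}$, and a Littlewood--Paley / area-type inequality for the invariant harmonic $f_{j}$ produce, after matching the weights, an inequality of the shape $M_{p}^{p}(r,g)-|g(0)|^{p}\le C(n,p)\big((n-1)K\big)^{2}M_{p}(r,g)^{p-2}\|f_{j}\|_{p}^{2}$; since $M_{p}(r,g)\ge|g(0)|$ one has $|g(0)|^{p}M_{p}(r,g)^{2-p}\le|g(0)|^{2}$, so this rearranges to $M_{p}(r,g)^{2}\le|g(0)|^{2}+C(n,p)\big((n-1)K\big)^{2}\|f_{j}\|_{p}^{2}$, whence $M_{p}(r,g)\le\tau_{2}(n,p)K(n-1)\|f_{j}\|_{p}+|f(0)|$. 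Combining this with $M_{p}(r,f)\le M_{p}(r,f_{j})+M_{p}(r,g)$, taking $\sup_{r<1}$ and collecting the constant terms gives $f\in\mathbb{H}^{p}(\mathbf{B}^{n},\mathbb{R}^{n})$ together with $(\mathscr{B}_{2})$ — this step is the $\Delta_{h}$-analog of the derivation of $(\mathscr{A}_{2})$ in Corollary~\ref{cor-top1}. The hardest point is this $p>2$ analysis: the absorption of $\|g\|_{p}$ hinges on choosing the Hölder exponents so that the gradient integral of $f_{j}$ carries the scaling-correct weight, whereas the $p\in(1,2]$ range is merely a pointwise inequality plus the representation formula. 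Finally, the sharpness of $p_{0}=1$ is obtained by exhibiting, for each $n\ge2$ and $K\ge1$, an invariant harmonic $K$-quasiregular mapping one of whose coordinates lies in $\mathbb{H}^{p}$ for $p\le1$ while the mapping itself does not — for $n=2$ this is the classical failure of the M.~Riesz theorem at $p=1$ (there $\Delta_{h}$-harmonic $=$ harmonic), and in general one uses a quasiregular mapping built from the hyperbolic Poisson kernel $\mathbf{P}_{h}(\,\cdot\,,\zeta_{0})$, exactly as for Theorem~\ref{lem-im-1}.
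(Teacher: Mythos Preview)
For $p\in(1,2]$ your argument coincides with the paper's: both compute $\Delta_h$ of the regularized $|f|^p$ and $|f_j|^p$, use $|\nabla f_k|\le K|\nabla f_j|$ together with $|f|^{p-2}\le|f_j|^{p-2}$ to obtain the pointwise domination $\Delta_h(|f|^p)\le\frac{1+(n-1)K^2}{p-1}\Delta_h(|f_j|^p)$, and integrate against the invariant Green kernel (the paper's Theorem~L).

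For $p>2$ the approaches diverge. You propose to run the Hardy--Stein identity on $|g|^p$ with $g=(f_k)_{k\neq j}$, then separate $|g|^{p-2}$ from $|\nabla f_j|^2$ via H\"older with exponents $\big(\tfrac{p}{p-2},\tfrac{p}{2}\big)$, controlling the first factor by a non-tangential maximal function and the second by a Littlewood--Paley integral of $f_j$ --- in effect transplanting the machinery of the paper's Theorem~\ref{lem-im-1}, Case~2, to the invariant setting. This is plausible but demanding: one needs $\Delta_h$-analogs of the maximal-function bound (the paper's Lemma~\ref{Lem-J}) and of Theorems~I and~J, and the weight $\mathcal{G}_r(x)(1-|x|^2)^{2-n}$ has to be matched to $(1-\rho)^{-1}$ along each radius, the step you label ``after matching the weights'' without detail. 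The paper takes a shorter route: it proves a two-sided $\widetilde{g}$-function characterization $\|f_k\|_p\asymp|f_k(0)|+\|\widetilde{g}_{f_k}\|_{L^p}$ for each invariant-harmonic coordinate (Lemma~\ref{Lem-chxw}), observes that quasiregularity gives $|\nabla^h f_k|\le K|\nabla^h f_j|$ pointwise and hence $\|\widetilde{g}_{f_k}\|_{L^p}\le K\|\widetilde{g}_{f_j}\|_{L^p}\le C(n,p)K\|f_j\|_p$, and then just sums the coordinate norms by Minkowski, $\|f\|_p\le\sum_k\|f_k\|_p$. No H\"older splitting, no maximal function, and the constants drop out in the exact form of $(\mathscr{B}_2)$, including the $-|f_j(0)|+\sqrt{n}|f(0)|$ terms your route would not reproduce precisely. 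Your approach would buy an argument that avoids the converse $g$-function inequality $(b_2)$ of Lemma~\ref{Lem-chxw}, at the price of importing the full invariant Littlewood--Paley/maximal theory; the paper's path is the more economical one and yields the stated constant directly.
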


\begin{Rem}  Theorem \ref{thm-4} shows that the answer to Question \ref{Question-1.2} is affirmative for the class of invariant harmonic $K$-quasiregular mappings
 of $\mathbf{B}^{n}$ into $\mathbb{R}^n$. Obviously, Theorem \ref{thm-4} is a unified form of Theorems G and  H, the rest two main results by Liu and Zhu \cite{L-Z}.
By comparing with Theorems G and H, Theorem \ref{thm-4} has further advantages as follows.
\begin{enumerate}
\item[$(i)$]
Theorem \ref{thm-4} shows that the assumption of ``$f_{1}\neq 0$" in Theorem G is redundant.

\item[$(ii)$]
The union of the ranges of $p$ discussed in Theorem G and Theorem H is $(1,2]\cup (n,+\infty)$. Obviously, when $n\geq 3$, $(1,2]\cup (n,+\infty)$ is a proper subset of $(1,\infty)$. But, in Theorem \ref{thm-4}, the range of $p$ is the whole interval $(1,\infty)$.

\item[$(iii)$]
Strictly speaking, Theorem H is not a Riesz conjugate functions theorem
 for the class of invariant harmonic $K$-quasiregular mappings. This is because the implication
 $$\text {from}\ \ f_{1}\in\mathbb{H}^{p}(\mathbf{B}^{n}, \mathbb{R})\ \text{to}\ \ f\in\mathbb{H}^{p}(\mathbf{B}^{n}, \mathbb{R}^{n})
 $$
 is not guaranteed by Theorem H. However, Theorem \ref{thm-4} overcomes this deficiency.
\end{enumerate}
\end{Rem}

\subsection{Pluriharmonic functions}
In particular, a twice continuously differentiable function $f$ of
$\Omega$ into $\mathbb{C}$ is called a {\it
pluriharmonic function} if for $z\in \Omega$ and
$\theta\in\mathbf{S}^{2n-1}$, the function
$f(z+\zeta\theta)$ with respect to the variable $\zeta$ is harmonic on $\{\zeta\in \mathbb{C}:\; |\zeta|<
d_{\Omega}(z)\}$,
where $d_{\Omega}(z)$ is the distance from
$z$ to the boundary $\partial\Omega$ of $\Omega$ (see \cite{DHK-2011, Ru,Ru-1, Vl}).

Moreover, if $\Omega$ is a simply connected domain, then a pluriharmonic function $f:\,\Omega\rightarrow
\mathbb{C}$  has a representation
$f=h+\overline{g},$ where $h$ and $g$ are holomorphic in $\Omega$. This representation is unique up to an additive constant (see \cite{CH-2022,CHPV,DHK-2011,Vl}). Furthermore, from this representation, it is easy to know that pluriharmonic
 functions are generalizations of holomorphic functions.
A vector-valued function $f=(f_{1},\ldots,f_{\nu})$ defined in $\Omega$ is said to be {\it pluriharmonic} if each coordinate function $f_{j}$
($1\leq j\leq \nu$) of $f$ is pluriharmonic. When $n=\nu=1$, pluriharmonic functions are the {\it planar harmonic} functions. Obviously, planar harmonic functions are generalizations of analytic functions. See \cite{Du-2} for more properties of planar harmonic functions.

For convenience, throughout this paper, we use $\mathscr{H}(\Omega, \mathbb{C}^{\nu})$ (resp.
$\mathscr{PH}(\Omega,\mathbb{C}^{\nu})$) to denote the set of all holomorphic functions (resp. all pluriharmonic functions) of $\Omega$ into $\mathbb{C}^{\nu}$.
In particular, we use $$\mathbf{h}^{p}(\mathbb{B}^{n}, \mathbb{C}^{\nu})=\mathbb{H}^{p}(\mathbb{B}^{n}, \mathbb{C}^{\nu})\cap\mathscr{PH}(\mathbb{B}^{n},\mathbb{C}^{\nu})
 ~\mbox{and}~
\mathbf{H}^{p}(\mathbb{B}^{n},\mathbb{C}^{\nu})=\mathbb{H}^{p}(\mathbb{B}^{n}, \mathbb{C}^{\nu})\cap \mathscr{H}(\mathbb{B}^{n},\mathbb{C}^{\nu})$$
to denote the {\it pluriharmonic Hardy} space and the {\it holomorphic Hardy} space, respectively (see \cite{D-1970,Ru-1,Zh}).

If $f\in\mathbf{h}^{p}(\mathbb{B}^{n},\mathbb{C}^{\nu})$ for some $p\in [1,\infty)$,
then the radial limit
$$f(\zeta)=\lim_{r\rightarrow1^{-}}f(r\zeta)$$ exists for almost every $\zeta\in\partial\mathbb{B}^{n}$ (see \cite[Theorems 6.7,  6.13 and 6.39]{ABR-2001}).
Moreover, if $p\in[1,\infty)$, then $\mathbf{h}^{p}(\mathbb{B}^{n},\mathbb{C}^{v})$ is a normed space
with respect to the norm $\| \cdot\|_p$.

\subsection{The second Beltrami type equation}

For
$$f=(f_{1},\ldots,f_{\nu})\in\mathscr{PH}(\Omega,\mathbb{C}^{\nu}),
$$
we denote by $\partial
f/\partial z_{j}$ (resp. $ \partial
f/\partial \overline{z}_{j}$) the column vector formed by the partial derivatives of its coordinate
functions, namely,
$$
\partial
f_{1}/\partial z_{j},\ldots ,\partial f_{\nu}/\partial z_{j}\ (\text{resp.}\ \partial
f_{1}/\partial \overline{z}_{j},\ldots ,\partial f_{\nu}/\partial \overline{z}_{j}),
$$
and let
$$Df=\left (\frac{\partial f}{\partial z_{1}}~\ldots ~\frac{\partial f}{\partial z_{n}}\right ) :=
\left ( \frac{\partial f_k}{\partial z_{j}}\right )_{\nu\times n}$$ $$\left(\mbox{resp.}~ \overline{D}f=\left
(\frac{\partial f}{\partial \overline{z}_{1}}~\ldots ~\frac{\partial f}{\partial \overline{z}_{n}}\right ) :=
\left ( \frac{\partial f_k}{\partial \overline{z}_{j}}\right )_{\nu\times n}\right).
$$

For a $\nu\times n$ complex matrix $A=(a_{kj})$,
the {\it Frobenius norm} of $A$ is defined as follows:
\[
\| A\|_F=
\sqrt{\sum_{k=1}^\nu\sum_{j=1}^n |a_{kj}|^2}.
\]
Then we have
\begin{equation}
\label{A-Frobenius-Operator}
\| A\|^2\leq\| A\|_F^2
\leq
n\| A\|^2,
\end{equation}
where
{\[
\|A\|=\sup_{\xi\in \mathbb{C}^n\backslash\{0\}}\left\{\frac{\|A
\xi\|}{\|\xi\|}\right\}.
\]}

Suppose that
$$
\begin{cases}
f\in W_{2n,{\rm loc}}^{1}(\mathbb{B}^{n});\\
\omega=\big(a_{kj}\big)_{n\times n};\\
\|\omega\|=\sup_{z\in \mathbb{B}^{n},\; \xi\in \mathbb{C}^n\backslash\{0\}}\left\{\frac{|\omega(z)
\xi|}{|\xi|}\right\}<1,
\end{cases}
$$
where $\|\omega\|$ is called the {\it operator norm}  of $\omega$, and all $a_{kj}$ are holomorphic functions
of $\mathbb{B}^{n}$ into $\mathbb{C}$. The following equation is called the {\it second Beltrami type} equation:
\be\label{Bel-1}
\overline{\overline{D}f}=\omega_f\, Df.
\ee Also, $\omega_f$ is called the {\it second dilation} of $f$.

It is not difficult to know that every sense-preserving mapping $f\in \mathscr{PH}(\mathbb{B}^{n},\mathbb{C}^{n})$ satisfies the equation \eqref{Bel-1}.
When $n=1$, the second Beltrami type equation is exactly the second Beltrami equation.
In particular, if $n=1$ and $\omega_{f}\equiv 0$, then
$\overline{D}f\equiv0$ which is the well-known Cauchy-Riemann equation. Therefore, the second Beltrami type equation is a generalization
of the Cauchy-Riemann equation.


\subsection{$\kappa$-pluriharmonic mappings}

Let $\kappa\in[0,1)$ be a constant. A function $f\in \mathscr{PH}(\mathbb{B}^{n},$ $\mathbb{C}^{n})$ is called a {\it $\kappa$-pluriharmonic} mapping if $\det Df\neq0$ and its second dilation satisfies
$$\left\|\omega_{f}\right\|\leq \kappa.
$$

In the following, we use $\mathscr{PH}_{n}(\kappa)$ to denote the set of all $\kappa$-pluriharmonic mappings $f=u+iv$ with $v(0)=0$, where $u$ and $v$
are pluriharmonic functions of $\mathbb{B}^{n}$ into $\mathbb{R}^{n}$. It follows from \cite[Theorem 5]{DHK-2011} that all $f\in\mathscr{PH}_{n}(\kappa)$ are sense-preserving.

Since $\mathbb{B}^{n}$
is a simply connected domain, we see that each $f\in\mathscr{PH}_{n}(\kappa)$ admits a decomposition
$f = h + \overline{g}$, where $h$ is a locally biholomorphic function and $g$ is a holomorphic function.
The following are relations between $\kappa$-pluriharmonic mappings and harmonic quasiregular mappings.

\begin{Pro}\label{Prop-1}
Suppose that $f=h+\overline{g}$, where $h$ is a locally biholomorphic function and $g$ is a holomorphic function in $\mathbb{B}^{n}$.
\begin{enumerate}
\item[{\rm (1)}]  If $n=1$, then $f\in\mathscr{PH}_{1}(\kappa)$ if and only if it is a planar  harmonic $K$-quasiregular mapping, where $K=(1+\kappa)/(1-\kappa)$.
\item[{\rm (2)}] If $n\geq2$, then for $f = h + \overline{g}\in\mathscr{PH}_{n}(\kappa)$, $f$ is $K$-quasiregular if and only if $h$ is $K^*$-quasiregular, where $K$ and $K^*$ depend on each other, possibly with $\kappa$ and $n$.
\item[{\rm (3)}]   For $n\geq 2$, $\mathscr{PH}_{n}(\kappa)$ contains elements which are not $K$-quasiregular for any $K\geq 1$.
\end{enumerate}
\end{Pro}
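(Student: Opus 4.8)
The plan is to reduce each assertion to unwinding the definitions, the only non-formal ingredient being an elementary comparison of the real differential of $f$ with that of its holomorphic part $h$.

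\emph{Part (1).} With the standing decomposition $f=h+\overline{g}$, in dimension one the matrices in \eqref{Bel-1} are the scalars $Df=h'$ and $\overline{D}f=\overline{g'}$, so \eqref{Bel-1} reduces to $g'=\omega_f h'$; since $h$ is locally biholomorphic, $h'\neq0$ and $\omega_f=g'/h'$ is precisely the classical dilatation of the harmonic mapping $f$. Thus $\|\omega_f\|\le\kappa$ is equivalent to $|g'|\le\kappa|h'|$, which (together with $h'\neq0$) forces $|h'|>|g'|$, i.e.\ sense-preservation, and by \eqref{KQR} is exactly $K$-quasiregularity with $K=(1+\kappa)/(1-\kappa)$. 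Since the normalization $v(0)=0$ and the local biholomorphy of $h$ belong to both descriptions, (1) follows.

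\emph{Part (2).} Fix $f=h+\overline{g}\in\mathscr{PH}_n(\kappa)$. As in (1), \eqref{Bel-1} gives $Dg=\omega_f\,Dh$ with $\|\omega_f\|\le\kappa<1$, and $\det Df=\det Dh\neq0$ makes $Dh$ invertible. Writing the real differential of $f$ as $v\mapsto(Dh)v+\overline{(Dg)v}=(Dh)v+\overline{\omega_f(Dh)v}$ and substituting $w=(Dh)v$ yields the factorization $f'=L_{\omega_f}\circ(Dh)_{\mathbb{R}}$, where $(Dh)_{\mathbb{R}}$ is the real realization of the complex matrix $Dh$ (so $\|(Dh)_{\mathbb{R}}\|=\|Dh\|$ and $\det(Dh)_{\mathbb{R}}=|\det Dh|^2=J_h$) and $L_\omega\colon w\mapsto w+\overline{\omega w}$. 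Because $\|\omega_f\|\le\kappa<1$ one has $(1-\kappa)|w|\le|L_{\omega_f}w|\le(1+\kappa)|w|$, so every singular value of $L_{\omega_f}$ lies in $[1-\kappa,1+\kappa]$; hence $(1-\kappa)\|Dh\|\le|f'(x)|\le(1+\kappa)\|Dh\|$ and $\det L_{\omega_f}\in[(1-\kappa)^{2n},(1+\kappa)^{2n}]$, the latter positive by continuity of $\omega\mapsto\det L_\omega$ over the convex set $\{\|\omega\|\le\kappa\}$ (on which it never vanishes and equals $1$ at $\omega=0$). Since $J_f=\det L_{\omega_f}\cdot J_h$, these estimates give, throughout $\mathbb{B}^n$,
\[
\Bigl(\tfrac{1-\kappa}{1+\kappa}\Bigr)^{2n}\,\frac{\|Dh\|^{2n}}{J_h}\;\le\;\frac{|f'(x)|^{2n}}{J_f(x)}\;\le\;\Bigl(\tfrac{1+\kappa}{1-\kappa}\Bigr)^{2n}\,\frac{\|Dh\|^{2n}}{J_h},
\]
so $f$ is $K$-quasiregular if and only if $h$ is $K^{\ast}$-quasiregular, with $K=(\tfrac{1+\kappa}{1-\kappa})^{2n}K^{\ast}$ in one direction and $K^{\ast}=(\tfrac{1+\kappa}{1-\kappa})^{2n}K$ in the other; both constants depend only on their partner together with $\kappa$ and $n$.

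\emph{Part (3).} Taking $g\equiv0$ (hence $\omega_f\equiv0$ and $f=h$), it suffices, for $n\ge2$, to produce a locally biholomorphic holomorphic $h\colon\mathbb{B}^n\to\mathbb{C}^n$ with $\operatorname{Im}h(0)=0$ whose ratio $\|Dh\|^{2n}/|\det Dh|^2$ is unbounded on $\mathbb{B}^n$. The map
\[
h(z_1,\dots,z_n)=\bigl(z_1,\ z_2+(1-z_1)^{-1},\ z_3,\dots,z_n\bigr)
\]
does this: $(1-z_1)^{-1}$ is holomorphic on $\mathbb{B}^n$, $\det Dh\equiv1$ (so $h$ is locally biholomorphic and $\det Df\neq0$), and $h(0)=(0,1,0,\dots,0)\in\mathbb{R}^n$ so $v(0)=0$; hence $f=h\in\mathscr{PH}_n(\kappa)$ for every $\kappa\in[0,1)$. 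But $\|Dh(z)\|\ge|1-z_1|^{-2}\to\infty$ as $z_1\to1$ while $J_h\equiv1$, so $|f'(x)|^{2n}\le KJ_f(x)$ fails on a set of positive measure for every $K\ge1$. This proves the proposition.

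The step I expect to be the main obstacle is the linear-algebra core of Part (2): controlling the operator norm and, above all, the determinant (including its sign) of the real map $L_{\omega_f}$ uniformly in $\omega_f$. Once that comparison is in hand, every claim of the proposition is a routine translation of the definitions.
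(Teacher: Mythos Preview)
Your argument is correct. Parts (1) and (3) coincide with the paper's proof almost verbatim; in (3) the paper uses $f=h+\kappa\overline{h}$ rather than your $g\equiv0$, but the mechanism (the same $h$, with $\det Dh\equiv1$ and $\|Dh\|$ unbounded) is identical.

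In Part (2) you take a genuinely different route. The paper computes the real Jacobian via the block formula $|\det J_f|=|\det Dh|^2\,|\det(I_n-\omega_f\overline{\omega_f})|$, bounds the second factor above and below using $\|\omega_f\|\le\kappa$, and then invokes the intermediate notion of a \emph{Wu $K$-mapping} together with Theorem~M (a result of Chen--Gauthier) to pass between the quasiregularity of $h$ and the pointwise inequality $\|Dh\|\le C|\det Dh|^{1/n}$. Your factorization $f'=L_{\omega_f}\circ(Dh)_{\mathbb{R}}$ with the singular-value pinch $1-\kappa\le\sigma(L_{\omega_f})\le1+\kappa$ bypasses Theorem~M entirely and compares $|f'|^{2n}/J_f$ with $\|Dh\|^{2n}/J_h$ directly. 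This makes the argument self-contained and produces the two-sided constants $\bigl((1+\kappa)/(1-\kappa)\bigr)^{2n}$ in one line; the paper's detour, on the other hand, ties the result to the existing literature on holomorphic quasiregular maps and makes the block structure of $J_f$ explicit.
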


\begin{Rem}
When $n=1$, Proposition \ref{Prop-1}{\rm (1)} shows that $\mathscr{PH}_{1}(\kappa)$ is exactly the set of all planar harmonic $K$-quasiregular mappings, where $K=(1+\kappa)/(1-\kappa)$.
However, when $n\geq 2$, the situation becomes different since the elements of $\mathscr{PH}_{n}(\kappa)$  may not necessarily be quasiregular.
\end{Rem}

As the last purpose of this paper, we study Question \ref{Question-1.1} for $\kappa$-pluriharmonic mappings. Our result is as follows.

\begin{Thm}\label{thm-1}
Suppose that $f=u+iv\in\mathscr{PH}_{n}(\kappa)$, where $n\geq 1$.
\begin{enumerate}
\item
There exists $p_{0}=1$ such that if
$u\in \mathbf{h}^{p}(\mathbb{B}^{n},\mathbb{R}^{n})$ for some $p>p_{0}$, then $f\in\mathbf{h}^{p}(\mathbb{B}^{n},\mathbb{C}^{n})$.
Furthermore,  the constant $p_{0}=1$ is sharp.
\item
There is a positive constant $\tau_3$ such that
$$M_{p}(r,v)\leq \tau_3 M_{p}(r,u)\ \ \forall\ \ r\in[0,1),$$
for every $u\in \mathbf{h}^{p}(\mathbb{B}^{n},\mathbb{R}^{n})$ with $p>1$ and every $v$ satisfying $f=u+iv\in\mathscr{PH}_{n}(\kappa)$, where $\tau_3=\tau_3(\kappa,p,n)$.
 In particular, we have the following statements:

\begin{enumerate}
\item 
For $p\in(1,2]$ and $\kappa=0$, if $u\in \mathbf{h}^{p}(\mathbb{B}^{n},\mathbb{R}^{n})$, then
\be\label{jkm-01}M_{p}(r,v)\leq \sqrt{n}\cot\frac{\pi}{2p^{\ast}} M_{p}(r,u)\ \ \forall\ \ r\in[0,1),\ee  where $p^{\ast}=\max\{p,~\frac{p}{p-1}\}$.
Moreover, if $n=1$, then (\ref{jkm-01}) is the same as that in the classical analytic case.
\item For $p\in(2,\infty)$ and $\kappa=0$, if $u\in \mathbf{h}^{p}(\mathbb{B}^{n},\mathbb{R}^{n})$, then
\be\label{jkm-02}M_{p}(r,v)\leq \sqrt{n}\cot\frac{\pi}{2p} M_{p}(r,u)\ \ \forall\ \ r\in[0,1).\ee
Furthermore, if $n=1$, then (\ref{jkm-02}) is also the same as that in the classical analytic case.
\end{enumerate}\end{enumerate}
\end{Thm}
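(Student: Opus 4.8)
The plan is to reduce Theorem~\ref{thm-1} to one complex variable by restricting $f$ to the complex lines through the origin, and then to run, slice by slice, a ``Beltrami $\to$ Riesz'' argument combined with the classical conjugate--function theorem. For $\zeta\in\partial\mathbb{B}^n$ and $\xi\in\mathbb{D}$ put $F_\zeta(\xi):=f(\xi\zeta)$, $U_\zeta(\xi):=u(\xi\zeta)$, $V_\zeta(\xi):=v(\xi\zeta)$. Since $f$ is pluriharmonic, $F_\zeta=U_\zeta+iV_\zeta$ is a planar harmonic mapping of $\mathbb{D}$ into $\mathbb{C}^n$ with $V_\zeta(0)=v(0)=0$; writing $f=h+\overline g$ on the simply connected domain $\mathbb{B}^n$ ($h$ locally biholomorphic, $g$ holomorphic) we get $F_\zeta=H_\zeta+\overline{G_\zeta}$ with $H_\zeta(\xi)=h(\xi\zeta)$, $G_\zeta(\xi)=g(\xi\zeta)$, and $H_\zeta'(\xi)=Dh(\xi\zeta)\zeta$, $G_\zeta'(\xi)=Dg(\xi\zeta)\zeta$. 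Because $\overline{\overline{D}f}=Dg$ and $Df=Dh$, the second Beltrami equation reads $Dg=\omega_f\,Dh$ with $\|\omega_f\|\le\kappa$, so $G_\zeta'(\xi)=\omega_f(\xi\zeta)H_\zeta'(\xi)$ and hence $|G_\zeta'(\xi)|\le\kappa\,|H_\zeta'(\xi)|$ for \emph{every} $\xi$ and \emph{every} direction $\zeta$. Finally, by the rotation invariance of $\sigma$,
\[
M_p^p(r,w)=\int_{\partial\mathbb{B}^n}\Big(\tfrac1{2\pi}\int_0^{2\pi}|w(re^{it}\zeta)|^p\,dt\Big)d\sigma(\zeta)=\int_{\partial\mathbb{B}^n}M_p^p(r,w_\zeta)\,d\sigma(\zeta),\qquad w\in\{u,v,f\},
\]
so it suffices to prove, with a constant uniform in $\zeta$, a one--variable inequality $M_p(\rho,V_\zeta)\le\tau_3 M_p(\rho,U_\zeta)$ (in the sharp form when $\kappa=0$) and then integrate in $\zeta$.

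For the one--variable step, fix a slice and drop subscripts. Put $\Phi:=H+G$ and $\Psi:=H-G$, two $\mathbb{C}^n$--valued holomorphic functions on $\mathbb{D}$ with $\operatorname{Re}\Phi=U$, $\operatorname{Im}\Psi=V$; from $G'=\omega_f H'$ and $\Phi'=(I+\omega_f)H'$ one gets $|G'(\xi)|\le c_\kappa|\Phi'(\xi)|$ with $c_\kappa:=\kappa/(1-\kappa)$, using $\|(I+\omega_f)^{-1}\|\le(1-\kappa)^{-1}$. Since $\Psi=\Phi-2G$ and $\operatorname{Im}\Phi-\operatorname{Im}\Phi(0)=\widetilde U$ (the componentwise harmonic conjugate of $U$ normalised at $0$), the hypothesis $V(0)=0$ yields the key identity $V=\widetilde U-2\operatorname{Im}\!\big(G-G(0)\big)$. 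Two ingredients now finish it: (i) by the Littlewood--Paley $g$--function estimates (equivalently, boundedness of multiplier and Volterra operators on $H^p$ via $H^\infty\subset\mathrm{BMOA}$), the pointwise bound $|G'|\le c_\kappa|\Phi'|$ gives $M_p(\rho,G-G(0))\lesssim_p c_\kappa\,M_p(\rho,\Phi-\Phi(0))$ for $p>1$; (ii) the classical Riesz/Pichorides theorem gives $M_p(\rho,\widetilde{U_k})\le\cot\frac{\pi}{2p^{\ast}}M_p(\rho,U_k)$ for $p\in(1,2]$ and $\le\cot\frac{\pi}{2p}M_p(\rho,U_k)$ for $p>2$, whence $M_p(\rho,\Phi-\Phi(0))\lesssim_{p}M_p(\rho,U)$. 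Substituting into the identity and passing between the $\mathbb{C}^n$--valued means and the componentwise ones by the elementary inequalities $\big(\sum_k x_k^2\big)^{p/2}\le n^{\max\{0,p/2-1\}}\sum_k|x_k|^p$ and $\sum_k|x_k|^p\le n^{\max\{0,1-p/2\}}\big(\sum_k x_k^2\big)^{p/2}$, we obtain $M_p(\rho,V_\zeta)\le\tau_3 M_p(\rho,U_\zeta)$ with $\tau_3=\tau_3(n,p,\kappa)$; integrating in $\zeta$ proves part~(2), and since $\tau_3<\infty$ for every $p>1$ it also proves part~(1).

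When $\kappa=0$ the dilation vanishes, so $c_\kappa=0$ and $G$ is constant; thus $V_\zeta$ equals the componentwise conjugate $\widetilde{U_\zeta}$ of $U_\zeta$, and using \emph{only} the sharp Pichorides inequality followed by the two power--mean inequalities gives $M_p(\rho,V_\zeta)\le n^{|1/p-1/2|}\cot\frac{\pi}{2p^{\ast}}M_p(\rho,U_\zeta)\le\sqrt n\,\cot\frac{\pi}{2p^{\ast}}M_p(\rho,U_\zeta)$ for $p\in(1,2]$, and likewise with $\cot\frac{\pi}{2p}$ for $p>2$; integrating in $\zeta$ and noting that for $n=1$ the factor is $1$ yields exactly the estimates \eqref{jkm-01}--\eqref{jkm-02} with the classical analytic constants. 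That $p_0$ cannot be lowered to $1$ is classical for $n=1$ (the Möbius map $\xi\mapsto(1+\xi)/(1-\xi)$ has real part a Poisson kernel, hence in $\mathbf h^1$, yet is not in $\mathbf H^1$), and for $n\ge2$ one exhibits a holomorphic map on $\mathbb{B}^n$ whose real part lies in $\mathbf h^1$ but which is not in $\mathbf H^1$; such maps belong to $\mathscr{PH}_{n}(\kappa)$ for every $\kappa$. The principal difficulty is ingredient~(i): one must control $G-G(0)$ by $\Phi-\Phi(0)$ with a constant \emph{independent of the slice direction} $\zeta$, which is precisely what the $g$--function (or Volterra--operator) argument provides and a crude pointwise estimate does not, while keeping the transfer matrix $(I+\omega_f)^{-1}\omega_f$ holomorphic with operator norm $\le\kappa/(1-\kappa)$, so that the final constant depends only on $n,p,\kappa$; the dimensional bookkeeping then costs only the harmless factor $\sqrt n$.
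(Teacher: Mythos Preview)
Your approach is correct and genuinely different from the paper's. The paper does not slice to complex lines; for $p\in(1,2]$ it regularizes $F_\mu=(|f|^2+\mu^{-1})^{1/2}$ and $U_\mu=(|u|^2+\mu^{-1})^{1/2}$, compares $\Delta(F_\mu^p)$ with $\Delta(U_\mu^p)$ directly on $\mathbb{B}^n$ via $\|\overline{D}f\|\le\kappa\|Df\|$, and applies Green's formula on the ball to obtain the explicit bound $M_p^p(r,v)\le\frac{2n(1+\kappa^2)}{(p-1)(1-\kappa)^2}M_p^p(r,u)$. For $p>2$ it sets $F=h+g$ (so $\operatorname{Re}F=u$), applies the several-variable Riesz inequality of Theorem~N componentwise to control $\|F_j\|_p$, and then uses the Ahern--Bruna $\mathscr G$-function characterization of $\mathbf H^p(\mathbb{B}^n)$ (Theorem~P) together with $\|Dg\|\le\kappa\|Dh\|$ to pass from $F$ to $h$ and then to $g$. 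Your slicing reduction is more unified---the single identity $V=\widetilde U-2\operatorname{Im}(G-G(0))$ plus the one-variable $g$-function comparison handles every $p>1$ at once and avoids both the several-variable Green formula and the ball Littlewood--Paley theory---at the cost of losing the explicit constant the paper obtains for $p\le2$. For the $\kappa=0$ statements (a) and (b) both proofs collapse to componentwise Pichorides plus dimensional bookkeeping and are essentially the same. One cosmetic remark: the ``Volterra/$H^\infty\subset\mathrm{BMOA}$'' detour is unnecessary, and the holomorphy of the transfer matrix $(I+\omega_f)^{-1}\omega_f$ is never actually used---only the pointwise bound $|G'|\le c_\kappa|\Phi'|$ enters the $g$-function comparison.
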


\begin{Rem}
Theorem \ref{thm-1} shows that the answer to Question \ref{Question-1.1} is affirmative for the class of $\kappa$-pluriharmonic mappings.
Also,
Theorem \ref{thm-1} is a version of Theorems E and F in $\mathbb{C}^n$ for all $n\geq 1$, which are the first two main results by Liu and Zhu \cite{L-Z}.
In particular, when $n=1$,
$(i)$ Theorem \ref{thm-1} is a unified form of Theorems E and F; $(ii)$ Theorem \ref{thm-1} implies that the assumption of ``$u\geq0$ or $u<0$" in Theorem E is redundant and the assumption of ``quasiconformality" on the mappings in Theorem F can be replaced the weaker one of ``quasiregularity".
 \end{Rem}

The rest of the paper is arranged as follows.  In Section \ref{csw-sec4}, the proof of Theorem \ref{lem-im-1} is presented,
Theorem \ref{thm-4}
is proved in Section \ref{csw-sec3}, and Section \ref{csw-sec2} is devoted to the proofs of Proposition \ref{Prop-1} and Theorem \ref{thm-1}.

\section{Riesz conjugate functions theorem for quasiregular mappings satisfying the Heinz's nonlinear
differential inequality}\label{csw-sec4}

The purpose of this section is to prove Theorem \ref{lem-im-1}. Before the proof, let us recall three useful results from \cite{Sto-2014}. The precise statements of these results need the following notions and notations.

For convenience, in this section, we assume that $\alpha>1$ denotes a constant.
For $\zeta\in\mathbf{S}^{n-1}$, let $\Gamma_{\alpha}(\zeta)$ denote the non-tangential approach region at $\zeta$, that is, $$\Gamma_{\alpha}(\zeta)=\left\{y\in\mathbf{B}^{n}:~|y-\zeta|<\alpha(1-|y|)\right\}.$$

Suppose that $\phi$ is a continuous function on $\mathbf{B}^{n}$. Then the {\it non-tangential maximal function} of $\phi$ on $\mathbf{S}^{n-1}$, denoted $M_{\alpha}[\phi]$, is defined as follows: For
$\zeta\in\mathbf{S}^{n-1}$,
$$M_{\alpha}[\phi](\zeta)=\sup\{|\phi(x)|:~x\in\Gamma_{\alpha}(\zeta)\}.$$

Suppose that $\psi\in C^{2}(\mathbf{B}^{n})$ is a non-negative subharmonic function of $\mathbf{B}^{n}$ into $\mathbb{R}$. Note that $\Delta \psi\geq 0$ (cf. \cite[Page 224]{ABR-2001}). Then it follows from $$\Delta \psi^{2}=2\left(|\nabla \psi|^{2}+\psi\Delta \psi\right)$$ that $\Delta \psi^{2}\geq 0$ on $\mathbf{B}^n$.
Let
$$
\begin{cases}
\mathcal{G}(\zeta,\psi)=\left[\int_{0}^{1}(1-r)\Delta \psi^{2}(r\zeta)dr\right]^{\frac{1}{2}};\\ $$\mathcal{S}_{\alpha}(\zeta,\psi)=\left[\int_{\Gamma_{\alpha}(\zeta)}(1-|y|)^{2-n}\Delta \psi^{2}(y)dV_{N}(y)\right]^{\frac{1}{2}}.
\end{cases}
$$

\begin{ThmI}{\rm(\cite[Corollary 3.1]{Sto-2014})}\label{Thm-A}
Suppose that $\psi\in C^{2}(\mathbf{B}^{n})$ is a non-negative subharmonic function of $\mathbf{B}^{n}$ into $\mathbb{R}$ such that $\Delta \psi^{2}$
is quasi-nearly subharmonic in $\mathbf{B}^{n}$. Then
$$
\exists\ \text{a positive constant}\ \tau_7=\tau_7(\alpha, n,p)\ \text{such that}\  \mathcal{G}(\zeta,\psi)\leq \tau_7\mathcal{S}_{\alpha}(\zeta,\psi)\ \text{in}\  \mathbf{S}^{n-1}.
$$
\end{ThmI}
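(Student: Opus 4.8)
The plan is to pass from the radial density to the cone integral by invoking the quasi-near subharmonicity of $\Phi:=\Delta\psi^{2}$ and then interchanging the order of integration. First note that $\Phi\geq0$ on $\mathbf{B}^{n}$: since $\psi\in C^{2}(\mathbf{B}^{n})$ is non-negative and subharmonic, $\Phi=2(|\nabla\psi|^{2}+\psi\Delta\psi)\geq0$; by hypothesis $\Phi$ is quasi-nearly subharmonic in $\mathbf{B}^{n}$, so there is a constant $C=C(n)$ with $\Phi(x)\leq C\rho^{-n}\int_{\mathbf{B}^{n}(x,\rho)}\Phi\,dV_{N}$ whenever $\mathbf{B}^{n}(x,\rho)\subset\mathbf{B}^{n}$.

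The first step would be an elementary geometric fact: since $\alpha>1$, the number $c:=\frac{\alpha-1}{2(\alpha+1)}$ lies in $(0,1)$, and
$$
\mathbf{B}^{n}\bigl(r\zeta,\,c(1-r)\bigr)\subset\Gamma_{\alpha}(\zeta)\qquad\text{for all }\zeta\in\mathbf{S}^{n-1},\ r\in[0,1),
$$
because $|w-r\zeta|<c(1-r)$ forces $1-|w|>(1-c)(1-r)>0$ and $|w-\zeta|<(1+c)(1-r)$, whence $|w-\zeta|<\tfrac{1+c}{1-c}(1-|w|)\leq\alpha(1-|w|)$ by the choice of $c$. Inserting this ball into the quasi-near subharmonicity inequality yields the pointwise bound $\Phi(r\zeta)\leq C\,c^{-n}(1-r)^{-n}\int_{\{|y-r\zeta|<c(1-r)\}}\Phi(y)\,dV_{N}(y)$ for every $r\in[0,1)$.

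Next I would multiply by $(1-r)$, integrate over $(0,1)$, and interchange the order of integration (legitimate since $\Phi\geq0$). With $E_{y}:=\{r\in[0,1):|y-r\zeta|<c(1-r)\}$ this gives
$$
\mathcal{G}(\zeta,\psi)^{2}\leq\frac{C}{c^{n}}\int_{\mathbf{B}^{n}}\Phi(y)\Bigl(\int_{E_{y}}(1-r)^{1-n}\,dr\Bigr)\,dV_{N}(y).
$$
For $r\in E_{y}$ one has $\tfrac{1-|y|}{1+c}<1-r<\tfrac{1-|y|}{1-c}$, so $E_{y}$ has length at most a $c$-dependent multiple of $1-|y|$ and, since $1-n\leq-1$ for $n\geq2$, the weight $(1-r)^{1-n}$ is at most $(1+c)^{n-1}(1-|y|)^{1-n}$ on $E_{y}$; hence $\int_{E_{y}}(1-r)^{1-n}\,dr\leq C'(c,n)(1-|y|)^{2-n}$, and $E_{y}\neq\emptyset$ forces $y\in\Gamma_{\alpha}(\zeta)$ by the geometric step. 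Therefore
$$
\mathcal{G}(\zeta,\psi)^{2}\leq\frac{CC'}{c^{n}}\int_{\Gamma_{\alpha}(\zeta)}(1-|y|)^{2-n}\Phi(y)\,dV_{N}(y)=\frac{CC'}{c^{n}}\,\mathcal{S}_{\alpha}(\zeta,\psi)^{2},
$$
and extracting square roots proves the claim with $\tau_{7}=(CC'/c^{n})^{1/2}$, which depends only on $\alpha$ and $n$ (the parameter $p$ in the statement is inherited from the intended application to Theorem \ref{lem-im-1} and is inert in this argument).

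I expect the only genuinely delicate point to be the singular weight $(1-r)^{1-n}$ in the interchanged integral when $n\geq3$: one must use the full two-sided comparison $1-r\asymp1-|y|$ on $E_{y}$, not merely an upper bound for $1-r$, so that the resulting power of $1-|y|$ comes out exactly as $2-n$ and matches the weight defining $\mathcal{S}_{\alpha}$. The cone geometry, the single use of quasi-near subharmonicity, and the Fubini--Tonelli interchange are all routine.
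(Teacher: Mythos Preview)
Your argument is correct and self-contained. The paper does not prove this statement at all: Theorem~I is quoted verbatim from \cite[Corollary~3.1]{Sto-2014} and used as a black box in the proof of Theorem~\ref{lem-im-1}. So there is no in-paper proof to compare against; you have supplied what the paper outsources.

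Your approach---apply the quasi-near subharmonic mean-value inequality to $\Phi=\Delta\psi^{2}$ on the ball $\mathbf{B}^{n}(r\zeta,c(1-r))\subset\Gamma_{\alpha}(\zeta)$, then Fubini and the two-sided comparison $1-r\asymp 1-|y|$ on $E_{y}$---is the standard one and matches Stoll's own argument in spirit. The geometric inclusion with $c=\tfrac{\alpha-1}{2(\alpha+1)}$ is verified correctly (indeed $\tfrac{1+c}{1-c}=\tfrac{3\alpha+1}{\alpha+3}\leq\alpha$ for $\alpha>1$), and your handling of the weight $(1-r)^{1-n}$ is exactly right: the lower bound $1-r>\tfrac{1-|y|}{1+c}$ is what produces the factor $(1-|y|)^{2-n}$ needed to land in $\mathcal{S}_{\alpha}$. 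Your closing remark that the dependence on $p$ in $\tau_{7}(\alpha,n,p)$ is spurious is also accurate; the constant genuinely depends only on $\alpha$, $n$, and the quasi-near subharmonicity constant of $\Phi$.
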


\begin{ThmJ}{\rm (\cite[Theorem 4.1]{Sto-2014})}\label{Thm-B}
Suppose that $p_{0}>0$ is a constant and $\psi\in C^{2}(\mathbf{B}^{n})$ is a non-negative subharmonic function of $\mathbf{B}^{n}$ into $\mathbb{R}$ such that $\psi^{p_{0}}$
is subharmonic. If $\psi\in\mathbb{H}^{p}(\mathbf{B}^{n}, \mathbb{R})$ for some $p>p_{0}$, then
$$\int_{\mathbf{S}^{n-1}}\mathcal{S}_{\alpha}^{p}(\zeta,\psi)d\sigma(\zeta)\leq\,C_{11}\|\psi\|_{p}^{p}\ \ \text{where}\ \ C_{11}=C_{11}(\alpha,n,p).
$$
\end{ThmJ}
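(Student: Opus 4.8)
The plan is to deduce this area-integral bound from a good-$\lambda$ inequality relating the square function $\mathcal{S}_\alpha(\cdot,\psi)$ to the non-tangential maximal function $M_\beta[\psi]$, together with an $L^{p}$ estimate for the latter. Write $A_\alpha(\zeta)=\mathcal{S}_\alpha(\zeta,\psi)$; since $\psi$ is subharmonic, $\Delta\psi^{2}=2(|\nabla\psi|^{2}+\psi\Delta\psi)\ge0$, so $A_\alpha$ is a genuine non-negative Lusin-type area integral. For the maximal function, note that $\psi\ge0$ is subharmonic with $\psi^{p_{0}}$ subharmonic and $\psi\in\mathbb{H}^{p}(\mathbf{B}^{n},\mathbb{R})$ with $p>p_{0}$, so $\psi^{p_{0}}\in\mathbb{H}^{q}(\mathbf{B}^{n},\mathbb{R})$ with $q=p/p_{0}>1$. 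Hence $\psi^{p_{0}}$ has a least harmonic majorant given by the Poisson integral $P[\phi]$ of some $\phi\in L^{q}(\mathbf{S}^{n-1})$, and since $M_\beta[\psi]^{p_{0}}=M_\beta[\psi^{p_{0}}]\le M_\beta[P[\phi]]$ is dominated by the Hardy--Littlewood maximal function of $\phi$, the Hardy--Littlewood maximal theorem in $L^{q}$ yields
$$\|M_\beta[\psi]\|_{L^{p}(\mathbf{S}^{n-1})}\le C\,\|\psi\|_{p},\qquad C=C(n,p,p_{0}).$$

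The core of the argument is the good-$\lambda$ inequality: for suitable apertures $\alpha<\alpha'$ and $\beta$ large enough there are constants $C$ and $\gamma_{0}$ with
$$\sigma\big(\{A_\alpha>2\lambda,\ M_\beta[\psi]\le\gamma\lambda\}\big)\le C\gamma^{2}\,\sigma\big(\{A_{\alpha'}>\lambda\}\big)\qquad(0<\gamma<\gamma_{0},\ \lambda>0).$$
To prove it I would set $\Omega=\{A_{\alpha'}>\lambda\}$, decompose this open subset of $\mathbf{S}^{n-1}$ into Whitney balls $\{Q_{j}\}$, and for each $Q_{j}$ pick $\zeta_{j}^{\ast}\notin\Omega$ at controlled distance, so $A_{\alpha'}(\zeta_{j}^{\ast})\le\lambda$. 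Splitting every cone $\Gamma_\alpha(\zeta)$, $\zeta\in Q_{j}$, into a remote part contained in $\Gamma_{\alpha'}(\zeta_{j}^{\ast})$ (which contributes at most $\lambda^{2}$) and a local part over the tent $T(Q_{j})$ above $Q_{j}$, one sees that on the exceptional set the local part of $A_\alpha^{2}$ must exceed $3\lambda^{2}$. Integrating over $Q_{j}$ and using Fubini to turn the iterated cone integral into a weighted tent integral gives
$$3\lambda^{2}\,\sigma\big(\{A_\alpha>2\lambda,\ M_\beta[\psi]\le\gamma\lambda\}\cap Q_{j}\big)\le C\int_{T(Q_{j})}(1-|y|)\,\Delta\psi^{2}(y)\,dV_{N}(y).$$

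It then remains to bound the tent energy. Here I would apply Green's identity on $T(Q_{j})$ against a barrier comparable to $1-|y|$ and use that $M_\beta[\psi]\le\gamma\lambda$ on $Q_{j}$ forces $\psi\le C\gamma\lambda$ on all of $T(Q_{j})$ (for $\beta$ chosen large relative to $\alpha,\alpha'$), which gives $\int_{T(Q_{j})}(1-|y|)\Delta\psi^{2}\,dV_{N}\le C(\gamma\lambda)^{2}\sigma(Q_{j})$. Summing over $j$ yields the good-$\lambda$ inequality. Finally, multiplying it by $p(2\lambda)^{p-1}$, integrating in $\lambda$ via the distribution-function formula, invoking the standard aperture-comparison estimate $\|A_{\alpha'}\|_{p}\le C\|A_\alpha\|_{p}$, and choosing $\gamma$ small to absorb the term $C\gamma^{2}\|A_{\alpha'}\|_{p}^{p}$, I obtain $\|A_\alpha\|_{p}^{p}\le C\|M_\beta[\psi]\|_{p}^{p}$; combined with the maximal-function estimate this is the desired inequality with $C_{11}=C_{11}(\alpha,n,p)$. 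To legitimize the absorption one first truncates the cones at height $1-\varepsilon$ so that every quantity is finite, and then lets $\varepsilon\to0$ by monotone convergence.

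The main obstacle is the tent-energy estimate, namely bounding $\int_{T(Q_{j})}(1-|y|)\Delta\psi^{2}\,dV_{N}$ by $(\gamma\lambda)^{2}\sigma(Q_{j})$. The difficulties are the curved geometry of $\mathbf{B}^{n}$, which forces the flat half-space barrier to be replaced by one adapted to the ball and to the weight $(1-|y|)^{2-n}$, and the presence of the extra term $\psi\Delta\psi$ in $\Delta\psi^{2}$. The sign condition $\Delta\psi^{2}\ge0$ ensures that only an upper bound is required, but the Green's identity on the tent must be arranged so that this non-negative term is absorbed into boundary data controlled by $\sup_{T(Q_{j})}\psi$.
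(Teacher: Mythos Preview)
The paper does not supply its own proof of Theorem~J: the result is quoted verbatim from Stoll \cite[Theorem~4.1]{Sto-2014} and used as a black box in the proof of Theorem~\ref{lem-im-1}. So there is no argument in the present paper to compare your sketch against.

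That said, your outline is exactly the classical route to such area-integral estimates and is essentially the strategy Stoll employs in the cited paper: reduce to a good-$\lambda$ inequality between $\mathcal{S}_\alpha(\cdot,\psi)$ and the non-tangential maximal function, handle the maximal function via the harmonic majorant of $\psi^{p_0}$ and the Hardy--Littlewood maximal theorem (this is precisely Lemma~\ref{Lem-J}(2) in the paper), and close with aperture comparison and absorption after truncation. Your identification of the tent-energy bound
\[
\int_{T(Q_j)}(1-|y|)\,\Delta\psi^{2}\,dV_N\;\le\;C(\gamma\lambda)^{2}\,\sigma(Q_j)
\]
as the crux is accurate. One point worth flagging: the direct Green's-identity argument you propose produces boundary terms of the form $(1-|y|)\,\partial_\nu\psi^{2}=2(1-|y|)\,\psi\,\partial_\nu\psi$ on the lateral and top faces of the tent, and you have no a~priori control of $|\nabla\psi|$ there. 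The standard remedy (which Stoll also uses) is to replace $(1-|y|)$ by a test function that vanishes on the entire boundary of a slightly enlarged tent $T(Q_j^\ast)$ and is comparable to $(1-|y|)$ on $T(Q_j)$; then all boundary terms disappear and the remaining volume term $\int \psi^{2}\,|\Delta b|$ is controlled by $\sup_{T(Q_j^\ast)}\psi^{2}\le C(\gamma\lambda)^{2}$ together with the obvious bound $\int_{T(Q_j^\ast)}|\Delta b|\le C\,\sigma(Q_j)$. With that adjustment your sketch goes through.
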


\begin{ThmK}\label{lem-x-1}{\rm (\cite[Lemma 2.3]{Sto-2014})}
Suppose that $\varphi\in L^{p}(\mathbf{S}^{n-1})$ for some $p\in(1,\infty)$. Then
there is
a positive constant $ C_{12}=C_{12}(\alpha,p)$ such that
$$\int_{\mathbf{S}^{n-1}}\big(M_{\alpha}[P[\varphi]](\zeta)\big)^{p}\sigma(\zeta)\leq\,C_{12}\|\varphi\|_{L^{p}(\mathbf{S}^{n-1})}^{p}.$$
\end{ThmK}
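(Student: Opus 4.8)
The plan is to dominate the non-tangential maximal function pointwise by the Hardy--Littlewood maximal operator on the sphere and then to invoke the classical Hardy--Littlewood maximal inequality. Write $P(x,\eta)=c_{n}(1-|x|^{2})/|x-\eta|^{n}$ for the Poisson kernel of $\mathbf{B}^{n}$, so that $P[\varphi](x)=\int_{\mathbf{S}^{n-1}}P(x,\eta)\varphi(\eta)\,d\sigma(\eta)$, and for $\zeta\in\mathbf{S}^{n-1}$ and $r>0$ set $C(\zeta,r)=\{\eta\in\mathbf{S}^{n-1}:~|\eta-\zeta|<r\}$, whose surface measure satisfies $\sigma(C(\zeta,r))\asymp r^{n-1}$. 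Let
$$\mathcal{M}[\varphi](\zeta)=\sup_{r>0}\frac{1}{\sigma(C(\zeta,r))}\int_{C(\zeta,r)}|\varphi(\eta)|\,d\sigma(\eta)$$
denote the centered Hardy--Littlewood maximal function of $\varphi$ on $\mathbf{S}^{n-1}$.

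The heart of the argument is the pointwise estimate
$$M_{\alpha}[P[\varphi]](\zeta)\leq C(\alpha,n)\,\mathcal{M}[\varphi](\zeta),\qquad \zeta\in\mathbf{S}^{n-1}.$$
To establish it, I would fix $\zeta$ and $x\in\Gamma_{\alpha}(\zeta)$ and put $t=1-|x|$; the cone condition gives $|x-\zeta|<\alpha t$, while for every $\eta\in\mathbf{S}^{n-1}$ the reverse triangle inequality supplies the uniform lower bound $|x-\eta|\geq\big||\eta|-|x|\big|=t$. Split $\mathbf{S}^{n-1}$ into the central cap $A_{0}=C(\zeta,2\alpha t)$ and the dyadic annuli $A_{k}=C(\zeta,2^{k+1}\alpha t)\setminus C(\zeta,2^{k}\alpha t)$ for $k\geq1$, of which only finitely many are nonempty. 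On $A_{0}$ the crude bound $P(x,\eta)\leq 2c_{n}t^{1-n}$ together with $\sigma(A_{0})\leq C(\alpha t)^{n-1}$ and $\int_{A_{0}}|\varphi|\,d\sigma\leq\sigma(A_{0})\,\mathcal{M}[\varphi](\zeta)$ gives a contribution at most $C\alpha^{n-1}\mathcal{M}[\varphi](\zeta)$. On $A_{k}$ with $k\geq1$ the triangle inequality yields $|x-\eta|\geq 2^{k}\alpha t-\alpha t\geq 2^{k-1}\alpha t$, hence $P(x,\eta)\leq C\,2^{-kn}\alpha^{-n}t^{1-n}$; combined with $\int_{A_{k}}|\varphi|\,d\sigma\leq\sigma(C(\zeta,2^{k+1}\alpha t))\,\mathcal{M}[\varphi](\zeta)\leq C(2^{k}\alpha t)^{n-1}\mathcal{M}[\varphi](\zeta)$, the $k$-th annulus contributes at most $C\,2^{-k}\alpha^{-1}\mathcal{M}[\varphi](\zeta)$. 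Summing the geometric series $\sum_{k\geq0}2^{-k}$ produces the claimed pointwise bound with $C(\alpha,n)=C(\alpha^{n-1}+\alpha^{-1})$.

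Granting the pointwise bound, the theorem follows at once: raising to the power $p$, integrating over $\mathbf{S}^{n-1}$, and applying the Hardy--Littlewood maximal theorem $\|\mathcal{M}[\varphi]\|_{L^{p}(\mathbf{S}^{n-1})}\leq C_{p}\|\varphi\|_{L^{p}(\mathbf{S}^{n-1})}$, valid for every $p\in(1,\infty)$, one gets
$$\int_{\mathbf{S}^{n-1}}\big(M_{\alpha}[P[\varphi]](\zeta)\big)^{p}\,d\sigma(\zeta)\leq C(\alpha,n)^{p}\int_{\mathbf{S}^{n-1}}\big(\mathcal{M}[\varphi](\zeta)\big)^{p}\,d\sigma(\zeta)\leq C_{12}\|\varphi\|_{L^{p}(\mathbf{S}^{n-1})}^{p},$$
with $C_{12}=C_{12}(\alpha,p)$ after absorbing the dimensional factor. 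I expect the main obstacle to be the uniformity of the kernel estimates over the entire cone $\Gamma_{\alpha}(\zeta)$: one must verify that the separation $|x-\eta|\geq 2^{k-1}\alpha t$ on each annulus holds for \emph{every} $x$ in the cone (this is exactly where the aperture $\alpha$ enters the final constant), and that the cap-measure comparison $\sigma(C(\zeta,r))\asymp r^{n-1}$ is invoked at the correct scales so that the annular contributions form a convergent rather than divergent geometric series. The passage to $L^{p}$ then uses only the standard maximal theorem and requires $p>1$, which accounts for the hypothesis on $p$.
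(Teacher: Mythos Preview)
Your argument is correct and is precisely the standard route to this classical result: the pointwise domination $M_{\alpha}[P[\varphi]]\leq C(\alpha,n)\,\mathcal{M}[\varphi]$ via a dyadic decomposition of the sphere into caps centered at $\zeta$, followed by the Hardy--Littlewood maximal theorem. The kernel estimates and the annular bookkeeping are all in order; in particular the separation $|x-\eta|\geq(2^{k}-1)\alpha t\geq 2^{k-1}\alpha t$ on $A_{k}$ is exactly what is needed, and the resulting series $\sum_{k}2^{-k}$ converges.

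Note, however, that the paper does not supply its own proof of Theorem~K: it is quoted verbatim as \cite[Lemma~2.3]{Sto-2014} and used as a black box in the proof of Lemma~\ref{Lem-J}. So there is no ``paper's proof'' to compare against; your write-up is simply a self-contained justification of a cited lemma. One small remark: the constant you obtain visibly depends on $n$ (through the cap-measure comparison and the factor $2^{n-1}$), so strictly speaking $C_{12}=C_{12}(\alpha,p,n)$; the paper's notation $C_{12}(\alpha,p)$ presumably treats $n$ as fixed.
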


The following result concerning the non-tangential maximal functions is useful for the proof of Theorem \ref{lem-im-1}.

\begin{Lem}\label{Lem-J}
Suppose that $p_{0}\in(0,1]$ be a constant, and let $\phi:~\mathbf{B}^{n}\mapsto[0,\infty)$ be such that $\phi^{p_{0}}$ is a subharmonic function.
\begin{enumerate}
\item\label{Lem-J-1}
If $M_{\alpha}[\phi]\in L^{p}(\mathbf{S}^{n-1})$  for some $p>p_{0}$, then  $$\phi\in\mathbb{H}^{p}(\mathbf{B}^{n}, \mathbb{R})\ \ \text{with}\ \  \|\phi\|_{p}\leq\|M_{\alpha}[\phi]\|_{L^{p}(\mathbf{S}^{n-1})}.
$$
\item\label{Lem-J-2}
If $\phi\in\mathbb{H}^{p}(\mathbf{B}^{n}, \mathbb{R})$ for some $p>p_{0}$, then  $$M_{\alpha}[\phi]\in L^{p}(\mathbf{S}^{n-1})\ \ \text{with}\ \  \|M_{\alpha}[\phi]\|_{L^{p}(\mathbf{S}^{n-1})}\leq\,\tau_4\|\phi\|_{p}\ \text{where}\ \ \tau_4=\tau_4(\alpha,p,p_0).
$$
\end{enumerate}
\end{Lem}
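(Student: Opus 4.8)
The plan is to prove the two parts separately, since part~\ref{Lem-J-1} is elementary while part~\ref{Lem-J-2} carries the real content.

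For part~\ref{Lem-J-1} I would use only the geometry of the approach regions: for every $\zeta\in\mathbf{S}^{n-1}$ and every $r\in[0,1)$ one has $|r\zeta-\zeta|=1-r<\alpha(1-r)=\alpha(1-|r\zeta|)$ because $\alpha>1$, so $r\zeta\in\Gamma_{\alpha}(\zeta)$ and hence $\phi(r\zeta)\leq M_{\alpha}[\phi](\zeta)$ pointwise on $\mathbf{S}^{n-1}$. Raising to the $p$-th power and integrating in $\zeta$ gives $M_{p}(r,\phi)\leq\|M_{\alpha}[\phi]\|_{L^{p}(\mathbf{S}^{n-1})}$ for each $r\in[0,1)$, and taking the supremum over $r$ yields $\phi\in\mathbb{H}^{p}(\mathbf{B}^{n},\mathbb{R})$ together with the stated estimate. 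Subharmonicity of $\phi^{p_{0}}$ plays no role here; only $\phi\geq0$ is used.

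For part~\ref{Lem-J-2} the idea is to dominate $\phi$ by the $1/p_{0}$-th power of a Poisson integral and then invoke Theorem K. Put $s:=p/p_{0}>1$. Since $M_{s}(r,\phi^{p_{0}})=M_{p}(r,\phi)^{p_{0}}$ for every $r$, the hypothesis says exactly that $u:=\phi^{p_{0}}$ is a nonnegative subharmonic function on $\mathbf{B}^{n}$ with $\sup_{0<r<1}M_{s}(r,u)=\|\phi\|_{p}^{p_{0}}<\infty$. By the classical least harmonic majorant theory on the ball (the boundary traces $\zeta\mapsto u(\rho\zeta)$, $\rho\uparrow1$, are bounded in the reflexive space $L^{s}(\mathbf{S}^{n-1})$, so a weak limit $\Psi\in L^{s}(\mathbf{S}^{n-1})$ exists with $\Psi\geq0$ a.e., $u\leq P[\Psi]$ on $\mathbf{B}^{n}$, and $\|\Psi\|_{L^{s}(\mathbf{S}^{n-1})}\leq\|\phi\|_{p}^{p_{0}}$), we obtain $\phi\leq(P[\Psi])^{1/p_{0}}$ on $\mathbf{B}^{n}$. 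Since $t\mapsto t^{1/p_{0}}$ is nondecreasing, this gives $M_{\alpha}[\phi]\leq\big(M_{\alpha}[P[\Psi]]\big)^{1/p_{0}}$ on $\mathbf{S}^{n-1}$. Raising to the power $p$, integrating in $\zeta$, and applying Theorem K to $\Psi\in L^{s}(\mathbf{S}^{n-1})$ with the exponent $s\in(1,\infty)$ yields
$$\int_{\mathbf{S}^{n-1}}\big(M_{\alpha}[\phi](\zeta)\big)^{p}\,d\sigma(\zeta)\leq\int_{\mathbf{S}^{n-1}}\big(M_{\alpha}[P[\Psi]](\zeta)\big)^{s}\,d\sigma(\zeta)\leq C_{12}\,\|\Psi\|_{L^{s}(\mathbf{S}^{n-1})}^{s}\leq C_{12}\,\|\phi\|_{p}^{p},$$
and taking $p$-th roots gives the claim with $\tau_{4}=C_{12}(\alpha,p/p_{0})^{1/p}$, which depends only on $\alpha$, $p$ and $p_{0}$.

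The one genuinely non-routine ingredient is the harmonic-majorant step, that is, producing $\Psi\in L^{s}(\mathbf{S}^{n-1})$ with $\phi^{p_{0}}\leq P[\Psi]$ and $\|\Psi\|_{L^{s}(\mathbf{S}^{n-1})}\leq\|\phi\|_{p}^{p_{0}}$; this is standard for $s>1$ and I would either present the weak-compactness argument sketched above or simply cite the corresponding results in \cite{ABR-2001,Sto}. Everything else, the inclusion $r\zeta\in\Gamma_{\alpha}(\zeta)$, the identity $M_{s}(r,\phi^{p_{0}})=M_{p}(r,\phi)^{p_{0}}$, the monotonicity of $t\mapsto t^{1/p_{0}}$ with $1/p_{0}\geq1$, and the bookkeeping of the constants, is routine.
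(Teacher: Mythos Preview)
Your proof is correct and follows essentially the same route as the paper. Part~(1) is handled identically via the pointwise domination $\phi(r\zeta)\leq M_{\alpha}[\phi](\zeta)$, and in part~(2) both arguments dominate $\phi^{p_{0}}$ by the Poisson extension of a boundary function in $L^{p/p_{0}}(\mathbf{S}^{n-1})$ and then invoke Theorem~K; the only cosmetic difference is that the paper produces this boundary function as the a.e.\ radial limit $\mathfrak{F}^{\ast}$ of $\phi^{p_{0}}$, whereas you obtain it as a weak-$L^{s}$ subsequential limit $\Psi$ of the slices $\phi^{p_{0}}(\rho\,\cdot)$---two standard and equivalent ways of getting the least harmonic majorant.
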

\begin{proof}
Thanks to
$$|\phi(r\zeta)|^{p}\leq \big(M_{\alpha}[\phi](\zeta)\big)^{p}\ \ \forall\ \ (r,\zeta)\in (0,1)\times\mathbf{S}^{n-1},
$$
we see from the assumption of $M_{\alpha}[\phi]\in L^{p}(\mathbf{S}^{n-1})$ that
$$\phi\in\mathbb{H}^{p}(\mathbf{B}^{n}, \mathbb{R})\ \ \text{with}\ \
\|\phi\|_{p}\leq\|M_{\alpha}[\phi]\|_{L^{p}(\mathbf{S}^{n-1})}.
$$

To prove the statement \eqref{Lem-J-2} in the lemma, assume that $\phi\in\mathbb{H}^{p}(\mathbf{B}^{n}, \mathbb{R})$
for some $p>p_{0}$. Let $\mathfrak{F}=\phi^{p_{0}}$. Then $\mathfrak{F}\in\mathbb{H}^{p/p_{0}}(\mathbf{B}^{n}, \mathbb{R})$,
and so, the subharmonicity of $\mathfrak{F}$ ensures that
\beqq\mathfrak{F}^{\ast}(\zeta)=\lim_{\rho\rightarrow1^{-}}
\mathfrak{F}(\rho\zeta)=\lim_{\rho\rightarrow1^{-}}\left(  \phi (\rho\zeta)\right)^{p_{0}}\eeqq
exists a.e. on $\mathbf{S}^{n-1}$.
Since $\mathfrak{F}$
is a non-negative subharmonic function, we see that for $x\in\mathbf{B}^{n}$,
$$\mathfrak{F}(x)\leq\int_{\mathbf{S}^{n-1}}\mathbf{P}(x,\zeta)\mathfrak{F}^{\ast}(\zeta)d\sigma(\zeta).$$
Consequently,
 $$(\phi(x))^{p}=(\mathfrak{F}(x))^{\frac{p}{p_{0}}}
\leq\left(\int_{\mathbf{S}^{n-1}}\mathbf{P}(x,\zeta)\mathfrak{F}^{\ast}(\zeta)d\sigma(\zeta)\right)^{\frac{p}{p_{0}}}=\left(P[\mathfrak{F}^{\ast}](x)\right)^{\frac{p}{p_{0}}},$$
and thus, it follows from Theorem K that
\begin{align*}\int_{\mathbf{S}^{n-1}}\big(M_{\alpha}[\phi](\zeta)\big)^{p}\sigma(\zeta)
&\leq\int_{\mathbf{S}^{n-1}}\big(M_{\alpha}[P[\mathfrak{F}^{\ast}]](\zeta)\big)^{\frac{p}{p_{0}}}\sigma(\zeta)\\
&\leq\,C_{12}\int_{\mathbf{S}^{n-1}}\big(\mathfrak{F}^{\ast}(\zeta)\big)^{\frac{p}{p_{0}}}\sigma(\zeta)\\
&=\,C_{12}\|\phi\|_{p}^{p}\ \ \text{with}\ \  C_{12}=C_{12}(\alpha, p, p_0).
\end{align*}
Hence the lemma is proved.
\end{proof}

\subsection*{Proof of Theorem \ref{lem-im-1}}
  Without loss of generality, we assume that $j=1$ and $f_{{1}}\in \mathbb{H}^{p}(\mathbf{B}^{n}, \mathbb{R})$ for some $p>1$. We split the proof into two cases.

\noindent $\mathbf{Case~1.}$\label{4-18-1}
Suppose that $p\in(1,2]$.

For $m\in\{1,2,\cdots\}$, let $$
F_{m}(x)=\left(|f(x)|^{2}+\frac{1}{m}\right)^{\frac{1}{2}}~\mbox{and}~F_{1,m}(x)=\left(f_{1}^{2}(x)+\frac{1}{m}\right)^{\frac{1}{2}}\ \ \forall\ \  x\in\mathbf{B}^{n}.
$$

First, we get an estimate on the quantity $M_{p}^{p}(r,F_{m})$ which is stated in \eqref{eq-r06} below, where $r\in(0,1)$.
Since
\beqq
\frac{\partial^{2}F_{m}^{p}}{\partial x_{j}^{2}}=\frac{p}{2}\left(\frac{p}{2}-1\right)F_{m}^{p-4}\left(\frac{\partial|f|^{2}}{\partial x_{j}}\right)^{2}
+\frac{p}{2}F_{m}^{p-2}\frac{\partial^{2}|f|^{2}}{\partial x_{j}^{2}}\ \ \forall \ \jmath\in\{1,\cdots,n\},
\eeqq
we see that
\beqq\label{eq-c1}
\Delta F_{m}^{p}=\frac{p}{2}\left(\frac{p}{2}-1\right)F_{m}^{p-4}\left|\nabla(|f|^{2})\right|^{2}+\frac{p}{2}
F_{m}^{p-2}\Delta |f|^{2},
\eeqq
where
\be\label{xc-0.1}
\left|\nabla(|f|^{2})\right|^{2}=4\sum_{j=1}^{n}\left|\langle f, f_{x_{j}}\rangle\right|^{2}\leq4|f|^{2}\sum_{j=1}^{n}|\nabla f_{j}|^{2},
\ee
$$f_{x_{j}}=\left(\frac{\partial f_{1}}{\partial x_{j}},\ldots,\frac{\partial f_{n}}{\partial x_{j}}\right)$$
and
\be\label{xa-0.1}
\Delta |f|^{2}=2\Big(\sum_{j=1}^{n}|\nabla f_{j}|^{2}+\sum_{j=1}^{n}f_{j}\Delta f_{j}\Big).
\ee

As Theorem O   gives
$$M_{p}^{p}(r,F_{m})  =
\big(F_{m}(0)\big)^{p}+\int_{ \mathbf{B}^{n}_{r}}\Delta\left[(F_{m}(x))^{p}\right]G_{n}(x,r)\,dV_{N}(x),$$
we see from the assumption of $p\in(1,2]$ in this case that

\beq\label{eq-r01}
M_{p}^{p}(r,F_{m})
&\leq&\frac{p}{2}\int_{ \mathbf{B}^{n}_{r}}
(F_{m}(x))^{p-2}\Delta(|f(x)|^{2})G_{n}(x,r)\,dV_{N}(x)\\ \nonumber
&&+\big(F_{m}(0)\big)^{p}\\  \nonumber
&=&\big(F_{m}(0)\big)^{p}+I_{1}(r)+I_{2}(r),
\eeq where
$$
\begin{cases}
I_{1}(r)=p\int_{ \mathbf{B}^{n}_{r}}
(F_{m}(x))^{p-2}\left(\sum_{j=1}^{n}f_{j}(x)\Delta f_{j}(x)\right)G_{n}(x,r)\,dV_{N}(x);\\
I_{2}(r)=p\int_{ \mathbf{B}^{n}_{r}}
(F_{m}(x))^{p-2}\left(\sum_{j=1}^{n}|\nabla f_{j}(x)|^{2}\right)G_{n}(x,r)\,dV_{N}(x).
\end{cases}
$$

Note that $$f\in\mathscr{H}_{a}(\mathbf{B}^{n},\mathbb{R}^{n}).$$ So, this assumption gives
\beqq
0 \leq\sum_{j=1}^{n}f_{j}(x)\Delta f_{j}(x)\leq a(x)\left(\sum_{j=1}^{n}|\nabla f_{j}(x)|^{2}\right)\ \ \forall\ \ x\in\mathbf{B}^{n},
\eeqq
which implies
\beqq\label{eq-r02}
I_{1}(r)\leq\sup_{x\in\mathbf{B}^{n}}\{a(x)\}I_{2}(r).
\eeqq
Then by (\ref{eq-r01}), we have
\be\label{eq-r05}
M_{p}^{p}(r,F_{m})\leq\big(F_{m}(0)\big)^{p}+\left(1+\sup_{x\in\mathbf{B}^{n}}\{a(x)\}\right)I_{2}(r).
\ee

Since the assumption of $f$ being $K$-quasiregular ensures
\beqq
|\nabla f_{j}|\leq K|\nabla f_{k}|\ \ \forall\ \ k,j\in\{1,\ldots,n\},
\eeqq
we obtain
\be\label{eq-KK-61f}
|\nabla\, f_{j}(x)|^{2}\leq\,K^{2}|\nabla f_{1}(x)|^{2}\ \ \forall\ \ (x,j)\in\mathbf{B}^{n}\times\{2,\ldots,n\}.
\ee
Then it follows from (\ref{eq-r05}) that

\beq\label{eq-r06}
M_{p}^{p}(r,F_{m})&\leq& \big(F_{m}(0)\big)^{p}+p\left[1+(n-1)K^{2}\right]C_{a}\\ \nonumber
&&\times\int_{ \mathbf{B}^{n}_{r}}
(F_{m}(x))^{p-2}|\nabla f_{1}(x)|^{2}G_{n}(x,r)\,dV_{N}(x)\\ \nonumber
&\leq&\big(F_{m}(0)\big)^{p}+p\left[1+(n-1)K^{2}\right]C_{a}\\ \nonumber
&&\times\int_{ \mathbf{B}^{n}_{r}}
(F_{1,m}(x))^{p-2}|\nabla f_{1}(x)|^{2}G_{n}(x,r)\,dV_{N}(x),
\eeq
where $C_{a}=1+\sup_{x\in\mathbf{B}^{n}}\{a(x)\}$.
Second, based on \eqref{eq-r06}, we find a lower bound for the quantity $M_{p}^{p}(r,F_{1,m})$ in terms of $M_{p}^{p}(r,F_{m})$ as stated in \eqref{eqr-10} below.
Elementary calculations lead to
\beq\label{eqr04}
\Delta F_{1,m}^{p}&=&\frac{p}{2}\left(\frac{p}{2}-1\right)F_{1,m}^{p-4}\left|\nabla f_{1}^{2}\right|^{2}+\frac{p}{2}
F_{1,m}^{p-2}\Delta f_{1}^{2}\\ \nonumber
&=&p(p-2)F_{1,m}^{p-4}f_{1}^{2}|\nabla f_{1}|^{2}
+p F_{1,m}^{p-2}|\nabla f_{1}|^{2}\\ \nonumber
&&+p F_{1,m}^{p-2}f_{1}\Delta f_{1}.
\eeq
Then it follows from the facts of $p(p-2)\leq0$ and $f_{1}^{2}<f_{1}^{2}+\frac{1}{m}=(F_{1,m})^{2}$ that
\beq\label{eqr07}
\Delta F_{1,m}^{p}&\geq&p(p-2)F_{1,m}^{p-4}(F_{1,m})^{2}|\nabla f_{1}|^{2}
+p F_{1,m}^{p-2}|\nabla f_{1}|^{2}\\ \nonumber
&&+p F_{1,m}^{p-2}f_{1}\Delta f_{1}\\ \nonumber
&=&
 p(p-1)F_{1,m}^{p-2}|\nabla f_{1}|^{2}
+p F_{1,m}^{p-2}f_{1}\Delta f_{1},
\eeq
and thus, by Theorem O,   we have
\beqq
M_{p}^{p}(r,F_{1,m})&=&\big(F_{1,m}(0)\big)^{p}+\int_{ \mathbf{B}^{n}_{r}}\Delta\big((F_{1,m}(x))^{p}\big)G_{n}(x,r)\,dV_{N}(x)\\
&\geq&\big(F_{1,m}(0)\big)^{p}\\
&&+p(p-1)\int_{ \mathbf{B}^{n}_{r}}(F_{1,m}(x))^{p-2}|\nabla f_{1}(x)|^{2}G_{n}(x,r)\,dV_{N}(x)\\
&&+p\int_{ \mathbf{B}^{n}_{r}}(F_{1,m}(x))^{p-2}f_{1}(x)\Delta (f_{1}(x))G_{n}(x,r)\,dV_{N}(x),
\eeqq
which, together with (\ref{eq-r06}) and the assumption of $f_{1}\Delta f_{1}\geq0$ in $\mathbf{B}^{n}$, yields that
\be\label{eqr-10}
M_{p}^{p}(r,F_{1,m})\geq\big(F_{1,m}(0)\big)^{p}+\tau_5\left(M_{p}^{p}(r,F_{m})-\big(F_{m}(0)\big)^{p}\right),
\ee
where $$\tau_5=\frac{p-1}{\left[1+(n-1)K^{2}\right]\left(1+\sup_{x\in\mathbf{B}^{n}}\{a(x)\}\right)}.$$

Now, we are ready to obtain an upper bound for the quantity $M_{p}^{p}(r,f)$ in terms of $M_{p}^{p}(r,f_{1})$.
By applying the dominated convergence theorem to (\ref{eqr-10}), we have
\beqq\label{eqr-11}
M_{p}^{p}(r,f_{1})&=&
\lim_{m\rightarrow\infty}M_{p}^{p}(r,F_{1,m})\\ \nonumber
&\geq&\lim_{m\rightarrow\infty}\big(F_{1,m}(0)\big)^{p}+\tau_5\lim_{m\rightarrow\infty}\left(M_{p}^{p}(r,F_{m})-\big(F_{m}(0)\big)^{p}\right)\\ \nonumber
&=&|f_{1}(0)|^{p}+\tau_5\left(M_{p}^{p}(r,f)-|f(0)|^{p}\right),
\eeqq which gives
\beqq\label{mm-1}
M_{p}^{p}(r,f)
\leq |f(0)|^{p}+\frac{1}{\tau_5}\left(M_{p}^{p}(r,f_{1})-|f_{1}(0)|^{p}\right).
\eeqq

\noindent $\mathbf{Case~2}$\label{4-18-2}
Suppose that $p\in(2,\infty)$.

Since for $x\in\mathbf{B}^{n}$,
$$
\Delta |f(x)|^{p}=\frac{p}{2}\left(\frac{p}{2}-1\right)|f(x)|^{p-4}\left|\nabla(|f(x)|^{2})\right|^{2}+\frac{p}{2}
|f(x)|^{p-2}\Delta|f(x)|^{2},$$
we know from (\ref{xc-0.1}) and (\ref{xa-0.1}) that
\beqq
\Delta |f(x)|^{p}\leq p(p-1)|f(x)|^{p-2}\sum_{j=1}^{n}|\nabla f_{j}(x)|^{2}+p|f(x)|^{p-2}\sum_{j=1}^{n}f_{j}(x)\Delta f_{j}(x).
\eeqq
Then the assumption of $f\in\mathscr{H}_{a}(\mathbf{B}^{n},\mathbb{R}^{n})$ gives
\beq\label{eq-vchw-1}
\Delta |f(x)|^{p}&\leq&p(p-1)|f(x)|^{p-2}\sum_{j=1}^{n}|\nabla f_{j}(x)|^{2}\\ \nonumber
&&+pa(x)|f(x)|^{p-2}\sum_{j=1}^{n}|\nabla f_{j}(x)|^{2}\\ \nonumber
&\leq&p\left[(p-1)+\sup_{x\in\mathbf{B}^{n}}\{a(x)\}\right]\\ \nonumber
&&\times|f(x)|^{p-2}\sum_{j=1}^{n}|\nabla f_{j}(x)|^{2},
\eeq
and thus, it follows from Theorem O that for $r\in(0,1)$,
\beq\label{fgh-1.0}
M_{p}^{p}(r,f)&=&
|f(0)|^{p}+\int_{ \mathbf{B}^{n}_{r}}\Delta\left(|f(x)|^{p}\right)G_{n}(x,r)\,dV_{N}(x)\\ \nonumber
&\leq&|f(0)|^{p}+p\left((p-1)+\sup_{x\in\mathbf{B}^{n}}\{a(x)\}\right)I_{5}(r),
\eeq
where $$I_{5}(r)=\int_{ \mathbf{B}^{n}_{r}}
|f(x)|^{p-2}\sum_{j=1}^{n}|\nabla f_{j}(x)|^{2}G_{n}(x,r)\,dV_{N}(x).$$

Next, we are going to get an estimate on $I_{5}(r)$ which is stated in \eqref{yjh-1} below.
 By (\ref{eq-KK-61f}), we have

\beq\label{ji-1}
I_{5}(r)&=&\int_{0}^{r}n\rho^{n-1}G_{n}(\rho,r)\\  \nonumber
&&\times\left(\int_{\mathbf{S}^{n-1}}|f(\rho\zeta)|^{p-2}\sum_{j=1}^{n}|\nabla f_{j}(\rho\zeta)|^{2}d\sigma(\zeta)\right)d\rho\\ \nonumber
&\leq& \tau_6\int_{0}^{r}n\rho^{n-1}G_{n}(\rho,r)\\  \nonumber
&&\times\left(\int_{\mathbf{S}^{n-1}}|f(\rho\zeta)|^{p-2}|\nabla f_{1}(\rho\zeta)|^{2}d\sigma(\zeta)\right)d\rho\\  \nonumber
&=& \tau_6r\int_{0}^{1}n(tr)^{n-1}G_{n}(tr,r)\\  \nonumber
&&\times
\left(\int_{\mathbf{S}^{n-1}}|f(tr\zeta)|^{p-2}|\nabla f_{1}(tr\zeta)|^{2}d\sigma(\zeta)\right)dt,
\eeq where $$\tau_6=1+(n-1)K^{2}.$$

For $r\in(0,1)$, let $$  f_{r}^{\ast}
(x)=(f_{r,1}(x),\ldots,f_{r,n}(x))=(f_{1}(rx),\ldots,f_{n}(rx))=f(rx)$$ in $\mathbf{B}^{n}$.
Since when $n\geq3$,
\beqq \label{eq-vchw-3}
n(tr)^{n-1}G_{n}(tr,r)=\frac{rt(1-t)(1+t+\ldots+t^{n-3})}{n-2}\leq rt(1-t)\leq r(1-rt),
\eeqq
and when $n=2$,
\beqq \label{eq-vchw-4}
2rtG_{2}(tr,r)=rt\log\frac{1}{t}\leq r(1-rt),
\eeqq
we see from \eqref{ji-1} and Fubini's theorem that
\beqq
I_{5}(r)&\leq& \tau_6r^{2}\int_{0}^{1}(1-rt)\left(\int_{\mathbf{S}^{n-1}}|f(tr\zeta)|^{p-2}|\nabla f_{1}(tr\zeta)|^{2}d\sigma(\zeta)\right)dt\\
&=& \tau_6r^{2}\int_{\mathbf{S}^{n-1}}\left(\int_{0}^{1}(1-rt)|f(tr\zeta)|^{p-2}|\nabla f_{1}(tr\zeta)|^{2}dt\right)d\sigma(\zeta)\\
&\leq& \tau_6r^{2}\int_{\mathbf{S}^{n-1}}\left(M_{\alpha}[f_{r}^{\ast}](\zeta)\right)^{p-2}\left(\int_{0}^{1}(1-rt)|\nabla f_{1}(tr\zeta)|^{2}dt\right)d\sigma(\zeta),
\eeqq
which, together with H\"older's inequality, yields that
\beq\label{huv-0}
I_{5}(r)
&\leq & \tau_6r\int_{\mathbf{S}^{n-1}}\left(M_{\alpha}[f_{r}^{\ast}](\zeta)\right)^{p-2}\mathscr{F}_{r}(\zeta)d\sigma(\zeta)\\  \nonumber
&\leq& \tau_6 r\left(\int_{\mathbf{S}^{n-1}}\left(M_{\alpha}[f_{r}^{\ast}](\zeta)\right)^{p}d\sigma(\zeta)\right)^{\frac{p-2}{p}}
\left(\int_{\mathbf{S}^{n-1}}\left(\mathscr{F}_{r}(\zeta)\right)^{\frac{p}{2}}d\sigma(\zeta)\right)^{\frac{2}{p}},
\eeq where $$\mathscr{F}_{r}(\zeta)=r\int_{0}^{1}(1-rt)|\nabla f_{1}(tr\zeta)|^{2}dt=\int_{0}^{r}(1-\rho)|\nabla  f_{1}(\rho\zeta)|^{2}d\rho.$$

Note that
$$\Delta F_{1,m}\geq(F_{1,m})^{-1}f_{1}\Delta f_{1}\geq0\ \ \forall\ \ m\in \{1,2,\ldots\}.$$
This ensures that $F_{1,m}$ is subharmonic in $\mathbf{B}^{n}$. Then for all $w\in\mathbf{B}^{n}$ and $\varrho\in[0,1-|w|)$ there holds

$$F_{1,m}(w)\leq\int_{\mathbf{S}^{n-1}}F_{1,m}(w+\varrho\zeta)d\sigma(\zeta),$$
which guarantees
\beqq
|f_{1}(w)|&=&\lim_{m\rightarrow\infty}F_{1,m}(w)\leq\lim_{m\rightarrow\infty}\int_{\mathbf{S}^{n-1}}F_{1,m}(w+\varrho\zeta)d\sigma(\zeta)\\
&\leq&\lim_{m\rightarrow\infty}\int_{\mathbf{S}^{n-1}}\left(|f_{1}(w+\varrho\zeta)|+\frac{1}{\sqrt{m}}\right)d\sigma(\zeta)\\
&=&\int_{\mathbf{S}^{n-1}}|f_{1}(w+\varrho\zeta)|d\sigma(\zeta).
\eeqq
Consequently, $|f_{1}|$ is subharmonic in $\mathbf{B}^{n}$, and thus, the assumption of $f_{{1}}\Delta f_{{1}}\geq 0$ gives
\beqq\label{dfg-1}\Delta f_{1}^{2}=2\left(|\nabla f_{1}|^{2}+f_{1}\Delta f_{1}\right)\geq2|\nabla f_{1}|^{2}. \eeqq
It follows that
\beq\label{huv-0-1}
\nonumber
\left(\int_{\mathbf{S}^{n-1}}\left(\mathscr{F}_{r}(\zeta)\right)^{\frac{p}{2}}d\sigma(\zeta)\right)^{\frac{2}{p}}&\leq&
\left(\int_{\mathbf{S}^{n-1}} \left(\mathscr{F}_{1}(\zeta)\right)^{\frac{p}{2}}  d\sigma(\zeta)\right)^{\frac{2}{p}}\\
&  \leq   & \frac{1}{2}  \left(\int_{\mathbf{S}^{n-1}}\big(\mathcal{G}(\zeta,|f_{1}|)\big)^{p}d\sigma(\zeta)\right)^{\frac{2}{p}}.
\eeq

Since Theorem I and the assumption of $\Delta f_{{1}}^{2}$ being quasi-nearly subharmonic ensure that
$$\mathcal{G}(\zeta,|f_{1}|)\leq \tau_7\mathcal{S}_{2}(\zeta,|f_{1}|),$$ where $\tau_7=C_{10}(2,n,p)$ and $C_{10}$ is from Theorem I,
and since by letting $p_0=1$, the fact of $|f_1|$ being subharmonic, the assumption of $f_{{1}}\in \mathbb{H}^{p}(\mathbf{B}^{n}, \mathbb{R})$ and Theorem J show that
$$\int_{\mathbf{S}^{n-1}}\mathcal{S}_{2}^{p}(\zeta,|f_1|)d\sigma(\zeta)\leq\,\tau_8\|f_1\|_{p}^{p},$$ where $\tau_8=C_{11}(2,n,p)$ and $C_{11}$ is from Theorem J, we see from (\ref{dfg-1}) that
\beq\label{huv-0-1}
\left(\int_{\mathbf{S}^{n-1}}\left(\mathscr{F}_{r}(\zeta)\right)^{\frac{p}{2}}d\sigma(\zeta)\right)^{\frac{2}{p}} \leq \frac{1}{2}\tau_{7}^{2}\tau_8\|f_{1}\|_{p}^{2}.
\eeq

On the other hand, for $m\in\{1,2,\ldots,\}$, let $$\mathbf{F}_{r,m}(x)=\left(|f_{r}^{\ast}(x)|^{2}+\frac{1}{m}\right)^{\frac{1}{2}}$$ in $\mathbf{B}^{n}$.
Then  we have

\beqq
\Delta \mathbf{F}_{r,m}&=&-\frac{1}{4}\mathbf{F}_{r,m}^{-3}\left|\nabla(|f_{r}^{\ast}|^{2})\right|^{2}+\frac{1}{2}
\mathbf{F}_{r,m}^{-1}\Delta |f_{r}^{\ast}|^{2}\\
&\geq&-\mathbf{F}_{r,m}^{-3}|f_{r}^{\ast}|^{2}\sum_{j=1}^{n}|\nabla f_{r,j}|^{2}+\mathbf{F}_{r,m}^{-1}\sum_{j=1}^{n}|\nabla f_{r,j}|^{2}+\mathbf{F}_{r,m}^{-1}\sum_{j=1}^{n}f_{r,j}\Delta f_{r,j}\\
&\geq&\mathbf{F}_{r,m}^{-1}\sum_{j=1}^{n}f_{r,j}\Delta f_{r,j}\geq0,
\eeqq
which implies that $\mathbf{F}_{r,m}$ is subharmonic. Then, for all $w\in\mathbf{B}^{n}$ and $\rho\in[0,1-|w|)$,
we obtain
\beqq
|f_{r}^{\ast}(w)|&=&\lim_{m\rightarrow\infty}\mathbf{F}_{r,m}(w)\leq\lim_{m\rightarrow\infty}\int_{\mathbf{S}^{n-1}}\mathbf{F}_{r,m}(w+\rho\zeta)d\sigma(\zeta)\\
&\leq&\lim_{m\rightarrow\infty}\int_{\mathbf{S}^{n-1}}\left(|f_{r}^{\ast}(w+\rho\zeta)|+\frac{1}{\sqrt{m}}\right)d\sigma(\zeta)\\
&=&\int_{\mathbf{S}^{n-1}}|f_{r}^{\ast}(w+\rho\zeta)|d\sigma(\zeta).
\eeqq
This shows that $|f_{r}^{\ast}|$ is also subharmonic in $\mathbf{B}^{n}$. By taking $p_0=1$ and $\alpha=2$, it follows from Lemma \ref{Lem-J}\eqref{Lem-J-2} that
\beq\label{huv-1}
\left(\int_{\mathbf{S}^{n-1}}\left(M_{\alpha}[f_{r}^{\ast}](\zeta)\right)^{p}\sigma(\zeta)\right)^{\frac{p-2}{p}}&\leq&\,\tau_9\|f_{r}^{\ast}\|_{L^{p}(\mathbf{S}^{n-1})}^{p-2}\\
\nonumber
=\tau_9 M_{p}^{p-2}(r,f),
\eeq where $\tau_9=\tau_4(2, p,1)$ and $\tau_4$ is from Lemma \ref{Lem-J}.
By applying (\ref{huv-0-1}) and (\ref{huv-1}) to (\ref{huv-0}), we get
\be\label{yjh-1}
I_{5}(r)
\leq\, \frac{1}{2}\tau_6\tau_{7}^{2}\tau_8\tau_9   M_{p}^{p-2}(r,f)\|f_{1}\|_{p}^{2}.
\ee

Now, by (\ref{fgh-1.0}), we see from (\ref{yjh-1}) that
\beq\label{fgh-1.2}
M_{p}^{p}(r,f)\leq |f(0)|^{p}+\tau_{10} M_{p}^{p-2}(r,f)\|f_{1}\|_{p}^{2},
\eeq
where $$\tau_{10}=\frac{1}{2}p\Big((p-1)+\sup_{x\in\mathbf{B}^{n}}\{a(x)\}\Big)\tau_6\tau_{7}^{2}\tau_8\tau_9.$$ Obviously, $$\tau_{10}=\tau_{10}(n, p, K).$$

Since  $f\in\mathscr{H}_{a}(\mathbf{B}^{n},\mathbb{R}^{n})$ and
\beqq
r^{n-1}\frac{d}{dr}M_{p}^{p}(r,f)
&=&\frac{p}{2n}\left(\frac{p}{2}-1\right)\int_{ \mathbf{B}^{n}_{r}}|f(x)|^{p-4}\left|\nabla(|f(x)|^{2})\right|^{2}dV_{N}(x)\\
&&+\frac{p}{2n}
\int_{ \mathbf{B}^{n}_{r}}|f(x)|^{p-2}\Delta|f(x)|^{2}dV_{N}(x)\\
&\geq&0,
\eeqq
we see that for $p>2$,  $M_{p}(r,f)$ is also increasing on $r\in(0,1)$. Then for $r\in[0,1)$,
$$|f(0)|\leq M_{p}(r,f),$$ which, together with
  (\ref{fgh-1.2}), gives that

  \beqq
  M_{p}^{p}(r,f)&\leq& |f(0)|^{p}+\tau_{10}M_{p}^{p-2}(r,f)\|f_{1}\|_{p}^{2}\\
  &\leq&|f(0)|^{2}M_{p}^{p-2}(r,f)+\tau_{10}M_{p}^{p-2}(r,f)\|f_{1}\|_{p}^{2}.
  \eeqq
  Consequently,
$$  M_{p}^{2}(r,f) \leq|f(0)|^{2}+\tau_{10}\|f_{1}\|_{p}^{2},
$$ which implies that
\be\label{mm-2}\|f\|_{p}^{2} \leq|f(0)|^{2}+\tau_{10}\|f_{1}\|_{p}^{2}\ee

Next, we prove the sharpness part. Let us consider the following example.
For any fixed $K\in[1,\infty)$, let $$f(x_{1},x_{2})=\left(f_{1}(x_{1},x_{2}), f_{2}(x_{1},x_{2})\right)\ \ \forall\ \ (x_{1},x_{2})\in\mathbf{B}^{2},$$
where
$$
\begin{cases} f_{1}(x_{1},x_{2})=\frac{2K(1-x_{1}^{2}-x_{2}^{2})}{(K+1)[(1-x_{1})^{2}+x_{2}^{2}]};\\
f_{2}(x_{1},x_{2})=
\frac{4x_{2}}{(K+1)[(1-x_{1})^{2}+x_{2}^{2}]}.
\end{cases}
$$
Then by elementary calculations, we have the following:
\begin{enumerate}
	\item[$(i)$]
	$f$ is a $K$-quasiconformal mapping.
	\item[$(ii)$]
	For each $j\in\{1,2\}$, $\Delta f_{j}=0$.
	\item[$(iii)$]
	For each $j\in\{1,2\}$, $\Delta f_{j}^{2}=2|\nabla f_{j}|^{2}$.
	\item[$(iv)$]
	 $f_{1}\in\mathbb{H}^{1}(\mathbb{D}, \mathbb{R})$ for any fixed $K\in[1,\infty)$ and
$f\notin\mathbb{H}^{1}(\mathbb{D}, \mathbb{R}^{2})$.
\end{enumerate}

Obviously, it follows from $(ii)$ that the assumption of $(1)$ in the theorem is satisfied and $f\in\mathscr{H}_{a}(\mathbf{B}^{2},\mathbb{R}^{2})$ for any positive and bounded function $a$. By $(iii)$, we see that $\Delta f_{j}^{2}$ is (quasi-nearly) subharmonic. These illustrate that $f$ satisfies all assumptions in the theorerm for the case when $n=2$. Then the sharpness of $p_{0}(=1)$ follows from $(iv)$.
The proof of this theorem is complete.
\qed

\section{Riesz conjugate functions theorem for invariant harmonic quasiregular mappings}\label{csw-sec3}
The purpose of this section is to prove Theorem \ref{thm-4}.
Before the proof, let us  recall two useful known results.


\begin{ThmL}{\rm (\cite[Lemma 2.5]{Pav}~{\rm or}~\cite[Theorem  4.1.1]{Sto})}\label{Gre-1.1}
Suppose that $\Psi:~\mathbf{B}^{n}\rightarrow\mathbb{R}$ is twice continuously differentiable. Then for $r\in(0,1)$,
\be
\int_{\mathbf{S}^{n-1}}\Psi(r\zeta)d\sigma(\zeta)=\Psi(0)+\int_{\mathbf{B}^{n}_r}g(|x|,r)\Delta_{h}\Psi(x)dV_{h}(x),
\ee
where $$dV_{h}(x)=(1-|x|^{2})^{-n}dV_{N}(x)\;\;\mbox{and}\;\; g(|x|,r)=\frac{1}{n}\int_{|x|}^{r}\frac{(1-s^{2})^{n-2}}{s^{n-1}}ds.$$
\end{ThmL}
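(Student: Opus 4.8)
The plan is to recognize the invariant Laplacian as a weighted Euclidean divergence and then reduce the identity to the ordinary divergence theorem together with a polar--coordinate computation. The key algebraic observation is the pointwise identity
$$
(1-|x|^{2})^{-n}\,\Delta_{h}\Psi(x)=\operatorname{div}\!\Big[(1-|x|^{2})^{2-n}\,\nabla\Psi(x)\Big],\qquad x\in\mathbf{B}^{n},
$$
which follows immediately from $\operatorname{div}(\varphi\,\nabla\Psi)=\varphi\,\Delta\Psi+\langle\nabla\varphi,\nabla\Psi\rangle$ with $\varphi(x)=(1-|x|^{2})^{2-n}$, since $\nabla\varphi(x)=2(n-2)(1-|x|^{2})^{1-n}x$ and $\Delta_{h}\Psi=(1-|x|^{2})^{2}\Delta\Psi+2(n-2)(1-|x|^{2})\langle\nabla\Psi,x\rangle$. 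Because $dV_{h}(x)=(1-|x|^{2})^{-n}\,dV_{N}(x)$, this rewrites the right-hand integral of the asserted formula as
$$
\int_{\mathbf{B}^{n}_{r}}g(|x|,r)\,\Delta_{h}\Psi(x)\,dV_{h}(x)=\int_{\mathbf{B}^{n}_{r}}g(|x|,r)\,\operatorname{div}\!\Big[(1-|x|^{2})^{2-n}\,\nabla\Psi(x)\Big]\,dV_{N}(x).
$$

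Next I would integrate by parts. Since $g(\cdot,r)$ is singular at the origin when $n\ge 3$ --- one has $g(\rho,r)=O(\rho^{2-n})$ as $\rho\to 0^{+}$, and $g(\rho,r)=O(\log(1/\rho))$ when $n=2$ --- I would first apply the divergence theorem on the annulus $\mathbf{B}^{n}_{r}\setminus\overline{\mathbf{B}^{n}_{\varepsilon}}$ and let $\varepsilon\to 0^{+}$. The boundary term over $\mathbf{S}^{n-1}(0,r)$ vanishes because $g(r,r)=0$; the boundary term over $\mathbf{S}^{n-1}(0,\varepsilon)$ has modulus $O(\varepsilon^{2-n}\cdot\varepsilon^{n-1})=O(\varepsilon)$ (respectively $O(\varepsilon\log(1/\varepsilon))$ when $n=2$), since $\nabla\Psi$ is bounded near $0$, so it also vanishes in the limit; and all relevant integrands are in $L^{1}(\mathbf{B}^{n}_{r})$, so dominated convergence applies. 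This yields
$$
\int_{\mathbf{B}^{n}_{r}}g(|x|,r)\,\Delta_{h}\Psi(x)\,dV_{h}(x)=-\int_{\mathbf{B}^{n}_{r}}\big\langle\nabla_{x}g(|x|,r),\,(1-|x|^{2})^{2-n}\nabla\Psi(x)\big\rangle\,dV_{N}(x).
$$
Because $g(\cdot,r)$ is radial with $\partial_{\rho}g(\rho,r)=-\tfrac1n(1-\rho^{2})^{n-2}\rho^{1-n}$, one has $\nabla_{x}g(|x|,r)=-\tfrac1n(1-|x|^{2})^{n-2}|x|^{-n}x$, and the integrand above collapses to $\tfrac1n|x|^{-n}\langle x,\nabla\Psi(x)\rangle$.

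Finally I would pass to polar coordinates $x=\rho\zeta$ with $\zeta\in\mathbf{S}^{n-1}$, under which $dV_{N}(x)=n\rho^{n-1}\,d\rho\,d\sigma(\zeta)$ and $\langle x,\nabla\Psi(x)\rangle=\rho\,\partial_{\rho}\Psi(\rho\zeta)$. The factor $n$ and all powers of $\rho$ cancel exactly, leaving
$$
\int_{\mathbf{B}^{n}_{r}}g(|x|,r)\,\Delta_{h}\Psi(x)\,dV_{h}(x)=\int_{\mathbf{S}^{n-1}}\!\!\int_{0}^{r}\partial_{\rho}\Psi(\rho\zeta)\,d\rho\,d\sigma(\zeta)=\int_{\mathbf{S}^{n-1}}\Psi(r\zeta)\,d\sigma(\zeta)-\Psi(0),
$$
which is the claim after transposing $\Psi(0)$. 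The only delicate point is the behaviour at the origin, i.e. the integrability of $g(|x|,r)\Delta_{h}\Psi$ near $0$ and the vanishing of the inner boundary term, both handled by the $\varepsilon$-exhaustion above; the remaining steps are a routine differentiation of $g$ and a change of variables. (Equivalently, one may first average $\Psi$ over $\mathbf{S}^{n-1}$ --- legitimate since $\Delta_{h}$ commutes with rotations, so $\int_{\mathbf{S}^{n-1}}\Delta_{h}\Psi(\rho\zeta)\,d\sigma(\zeta)$ is the radialization of $\Delta_{h}\Psi$ --- and then invoke the one-dimensional identity $(\Delta_{h}\phi)(\rho)(1-\rho^{2})^{-n}n\rho^{n-1}=n\frac{d}{d\rho}\big[(1-\rho^{2})^{2-n}\rho^{n-1}\phi'(\rho)\big]$ followed by an integration by parts in $\rho\in(0,r)$.)
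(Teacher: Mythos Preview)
Your argument is correct. The paper does not actually prove Theorem~L: it is quoted verbatim from \cite[Lemma~2.5]{Pav} (equivalently \cite[Theorem~4.1.1]{Sto}) and used as a black box, so there is no in-paper proof to compare against. Your divergence identity $(1-|x|^{2})^{-n}\Delta_{h}\Psi=\operatorname{div}\big[(1-|x|^{2})^{2-n}\nabla\Psi\big]$, followed by an $\varepsilon$-exhaustion Green's formula and the explicit computation of $\partial_{\rho}g(\rho,r)$, is exactly the standard route to this Green-type representation, and all the checks (integrability of $g(|x|,r)\Delta_{h}\Psi$ near $0$, vanishing of both boundary terms, cancellation of the weights $(1-|x|^{2})^{n-2}$ and $(1-|x|^{2})^{2-n}$) are handled cleanly.
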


For $p\in(0,\infty )$, we use $L^{p}(\mathbf{S}^{n-1})$ to denote the
set of all measurable functions $F$ of $\mathbf{S}^{n-1}$ into
$\mathbb{R}$ which satisfy

$$\|F\|_{L^{p}(\mathbf{S}^{n-1})}=\left(\int_{\mathbf{S}^{n-1}}|F(\zeta)|^{p}d\sigma(\zeta)\right)^{\frac{1}{p}}<\infty.$$

Suppose that $f$ is an  invariant harmonic function of $\mathbf{B}^{n}$ into $\mathbb{R}$.
The {\it $\widetilde{g}_{f}$-function} of $f$ is defined as follows: For $\zeta\in \mathbf{S}^{n-1}$,
$$\widetilde{g}_{f}(\zeta)=\left(\int_{0}^{1}\frac{|\nabla^{h} f(r\zeta)|^{2}}{1-r}dr\right)^{\frac{1}{2}},$$
where $\nabla^{h}f=(1-|x|^{2})\nabla f$ stands for the invariant
real gradient (see \cite{L-Z,Sto,Sto-2012} for the precise definition and properties of this gradient).

The following result concerns the relations between an invariant harmonic function $f$ and its $\widetilde{g}_{f}$.

 \begin{Lem}\label{Lem-chxw}
Suppose that $f$ is an invariant harmonic function of $\mathbf{B}^{n}$ into $\mathbb{R}$ and $p\in[2,\infty)$.

\begin{enumerate}
\item[{\rm ($b_1$)}]\quad  If $f\in\mathbb{H}^{p}(\mathbf{B}^{n}, \mathbb{R})$, then
$$
\exists\ \text{a positive constant}\ C_6=C_6(n,p)\  \text{such that}\
\|\widetilde{g}_{f}\|_{L^{p}(\mathbf{S}^{n-1})}
\leq\,C_6\|f\|_{p}.
$$
\item[{\rm ($b_2$)}]\quad If $\widetilde{g}_{f}\in\,L^{p}(\mathbf{S}^{n-1})$, then
$$
\exists\ \text{a positive constant}\ C_7=C_7(n,p)\ \text{such that}\
\|f\|_{p}\leq|f(0)|+C_7\|\widetilde{g}_{f}\|_{L^{p}(\mathbf{S}^{n-1})}.
$$
\end{enumerate}
\end{Lem}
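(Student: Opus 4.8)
The plan is to reduce both inequalities to a single Green‑type identity for the invariant Laplacian. Since $f$ is invariant harmonic, the chain rule $\Delta(\Phi\circ f)=\Phi''(f)|\nabla f|^{2}+\Phi'(f)\Delta f$ together with $\Delta_{h}f=0$ gives $\Delta_{h}(\Phi\circ f)=\Phi''(f)|\nabla^{h}f|^{2}$ for every $\Phi\in C^{2}$, where $|\nabla^{h}f|=(1-|x|^{2})|\nabla f|$. Taking $\Phi(t)=(t^{2}+\tfrac1m)^{p/2}$, applying Theorem~L, and letting $m\to\infty$ (this is where $p\ge2$ enters) yields, for every $r\in(0,1)$,
$$M_{p}^{p}(r,f)=|f(0)|^{p}+p(p-1)\int_{\mathbf{B}^{n}_{r}}g(|x|,r)\,|f(x)|^{p-2}\,|\nabla^{h}f(x)|^{2}\,dV_{h}(x),$$
which for $p=2$ reads $M_{2}^{2}(r,f)=|f(0)|^{2}+2\int_{\mathbf{B}^{n}_{r}}g(|x|,r)|\nabla^{h}f(x)|^{2}\,dV_{h}(x)$. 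Next, from the explicit kernel $g(|x|,r)=\tfrac1n\int_{|x|}^{r}\tfrac{(1-s^{2})^{n-2}}{s^{n-1}}ds$ and $dV_{h}=(1-|x|^{2})^{-n}dV_{N}$, a polar‑coordinate computation shows that for nonnegative $G$,
$$\int_{\mathbf{B}^{n}_{r}}g(|x|,r)\,G(x)\,dV_{h}(x)\le C_{n}\int_{\mathbf{S}^{n-1}}\Big(\int_{0}^{r}\frac{G(\rho\zeta)}{1-\rho}\,d\rho\Big)d\sigma(\zeta),$$
with a matching lower bound away from the origin; the point is $g(\rho,r)\asymp(1-\rho)^{n-1}$ as $\rho\uparrow r$ while $(1-\rho^{2})^{-n}\rho^{n-1}\asymp(1-\rho)^{-n}$. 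This is the bridge to the $\widetilde{g}_{f}$‑function.

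For ($b_{2}$): combining the identity with the weight estimate and the elementary bound $|f(\rho\zeta)|\le M_{\alpha}[f](\zeta)$ (valid for $\alpha>1$, since the radius through $\zeta$ lies in $\Gamma_{\alpha}(\zeta)$) gives
$$M_{p}^{p}(r,f)\le|f(0)|^{p}+C\int_{\mathbf{S}^{n-1}}\big(M_{\alpha}[f](\zeta)\big)^{p-2}\widetilde{g}_{f}(\zeta)^{2}\,d\sigma(\zeta)\le|f(0)|^{p}+C\,\|M_{\alpha}[f]\|_{L^{p}(\mathbf{S}^{n-1})}^{p-2}\,\|\widetilde{g}_{f}\|_{L^{p}(\mathbf{S}^{n-1})}^{2}$$
by H\"older with exponents $\tfrac{p}{p-2}$ and $\tfrac{p}{2}$. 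Here one invokes the non‑tangential maximal inequality $\|M_{\alpha}[f]\|_{L^{p}(\mathbf{S}^{n-1})}\le C\|f\|_{p}$ for invariant harmonic $f\in\mathbb{H}^{p}$; a routine bootstrap in $r$ (one first reads off $f\in\mathbb{H}^{2}$ from the $p=2$ identity, then runs the estimate on $\mathbf{B}^{n}_{r}$ with the maximal and square functions truncated at radius $r$ and lets $r\to1$) removes the a priori unknown finiteness of $\|f\|_{p}$, and, since $|f(0)|\le M_{p}(r,f)$, this leads to $\|f\|_{p}^{2}\le|f(0)|^{2}+C\|\widetilde{g}_{f}\|_{L^{p}(\mathbf{S}^{n-1})}^{2}$, which is ($b_{2}$).

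For ($b_{1}$): letting $r\to1$ in the identity and using $\|f\|_{p}<\infty$ gives $\int_{\mathbf{B}^{n}}g(|x|,1)|f(x)|^{p-2}|\nabla^{h}f(x)|^{2}\,dV_{h}(x)<\infty$, but the spurious factor $|f|^{p-2}$ must still be removed. The standard route is through the invariant area integral $\widetilde{A}_{f}(\zeta)=\big(\int_{\Gamma_{\alpha}(\zeta)}|\nabla^{h}f(x)|^{2}(1-|x|)^{-n}\,dV_{N}(x)\big)^{1/2}$: the invariant sub‑mean‑value property of $|\nabla^{h}f|^{2}$ gives the pointwise bound $\widetilde{g}_{f}\le C\widetilde{A}_{f}$, and a good‑$\lambda$ inequality comparing $\widetilde{A}_{f}$ with $M_{\alpha}[f]$ — the invariant analogue of the mechanism behind Theorem~I applied to $f^{2}$ — yields $\|\widetilde{A}_{f}\|_{L^{p}(\mathbf{S}^{n-1})}\le C\|M_{\alpha}[f]\|_{L^{p}(\mathbf{S}^{n-1})}\le C\|f\|_{p}$; equivalently, ($b_{1}$) is just the $L^{p}$‑boundedness of the invariant Littlewood--Paley $g$‑function, which may be quoted from \cite{Sto,Sto-2012}.

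I expect the main obstacle to be precisely this last step, together with the non‑tangential maximal inequality used in ($b_{2}$): for $n\ge3$ an invariant‑harmonic function is \emph{not} Euclidean (sub)harmonic, so Theorems~I--K and Lemma~\ref{Lem-J} are not directly applicable, and one must substitute the invariant potential‑theoretic counterparts of the Calder\'on--Zygmund / good‑$\lambda$ and maximal‑function machinery (cf. \cite{Sto,Sto-2012}); when $n=2$ everything collapses to the Euclidean setting and the cited results apply verbatim. The remaining ingredients — the Green identity, the weight comparison, the H\"older bookkeeping, and the bootstrap in $r$ — are routine.
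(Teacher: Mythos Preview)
For ($b_1$) you and the paper do the same thing: both quote the $L^{p}$--boundedness of the invariant $g$-function from the literature (the paper invokes \cite[Theorem~4.2]{Kw}; you point to \cite{Sto,Sto-2012}).

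For ($b_2$) your route differs from the paper's. The paper argues by \emph{duality}: after showing $f\in\mathbb{H}^{2}$ as you do, it writes $\|f\|_{p}=\sup\{|\int_{\mathbf{S}^{n-1}}f^{*}\mathbf{g}\,d\sigma|:\|\mathbf{g}\|_{L^{q}}\le1\}$, applies Theorem~L to $fP_{h}[\mathbf{g}]$ (both factors invariant harmonic, so $\Delta_{h}(fP_{h}[\mathbf{g}])=2\langle\nabla^{h}f,\nabla^{h}P_{h}[\mathbf{g}]\rangle$), and after the weight estimate plus Cauchy--Schwarz in $\rho$ and H\"older in $\zeta$ obtains $|\int f^{*}\mathbf{g}\,d\sigma|\le|f(0)|+C\|\widetilde{g}_{f}\|_{L^{p}}\|\widetilde{g}_{P_{h}[\mathbf{g}]}\|_{L^{q}}$; the last factor is $\le C(n,q)$ by the $L^{q}$ Littlewood--Paley bound (here $q=p/(p-1)\in(1,2]$, quotable from \cite[Theorem~7.1.1]{Sto}) applied to the genuine $\mathbb{H}^{q}$-function $P_{h}[\mathbf{g}]$. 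No bootstrap and no maximal function are needed. Your direct Hardy--Stein/maximal-function approach mirrors the paper's own proof of Theorem~\ref{lem-im-1}, Case~2, and is sound in outline, but the bootstrap you call routine is exactly where the invariant setting resists: $\Delta_{h}$ does not commute with dilations, so $x\mapsto f(rx)$ is \emph{not} invariant harmonic and the truncated maximal inequality $\|\sup_{\rho\le r}|f(\rho\,\cdot\,)|\|_{L^{p}}\le C\,M_{p}(r,f)$ with $C$ independent of $r$ cannot be obtained by simply rescaling to the unit ball. It can be extracted from \cite{Sto}, as you anticipate, but that is a heavier black box than the duality, which trades the maximal function for the easy direction of Littlewood--Paley at the conjugate exponent.
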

\begin{proof}
Since the statement ($b_{1}$) in the lemma easily follows from \cite[Theorem 4.2]{Kw}, we only need to prove ($b_{2}$).
By H\"older's inequality and the assumption of $\widetilde{g}_{f}\in\,L^{p}(\mathbf{S}^{n-1})$, we have
\beq\label{eq-pl-1}
\int_{\mathbf{S}^{n-1}}\left(\widetilde{g}_{f}(\zeta)\right)^{2}d\sigma(\zeta)
&\leq&\left[\int_{\mathbf{S}^{n-1}}\left(\widetilde{g}_{f}(\zeta)\right)^{p}d\sigma(\zeta)\right]^{\frac{2}{p}}
\left(\int_{\mathbf{S}^{n-1}}1^{q}d\sigma(\zeta)\right)^{\frac{1}{q}}\\ \nonumber
&=&\|\widetilde{g}_{f}\|_{L^{p}(\mathbf{S}^{n-1})}^{2}<\infty.
\eeq

Since
\be\label{eq-pl-2}
g_{n}(\rho,1)\leq\frac{2^{n-2}}{n\rho^{n-1}}\int_{\rho}^{1}(1-s)^{n-2}ds=\frac{2^{n-2}}{n(n-1)}\frac{(1-\rho)^{n-1}}{\rho^{n-1}},
\ee
we know from (\ref{eq-pl-1}), Theorem L and the invariant subharmonicity of $|f|^{\gamma}$  $(\gamma>1)$ that
\beqq
\lim_{r\rightarrow1^{-}}M_{2}^{2}(r,f)
&=&f^{2}(0)+2\int_{\mathbf{B}^{n}}g(|x|,1)|\nabla^{h} f(x)|^{2}dV_{h}(x)\\
&=&f^{2}(0)+2\int_{\mathbf{S}^{n-1}}\int_{0}^{1}\frac{n\rho^{n-1}g(\rho,1)}{(1-\rho^{2})^{n}}|\nabla^{h} f(\rho\zeta)|^{2}d\rho\,d\sigma(\zeta)\\
&\leq&f^{2}(0)+\frac{2^{n-1}}{n-1}\int_{\mathbf{S}^{n-1}}\left(\widetilde{g}_{f}(\zeta)\right)^{2}d\sigma(\zeta)\\
&<&\infty.
\eeqq
Consequently, $f\in\mathbb{H}^{2}(\mathbf{B}^{n}, \mathbb{R})$ which
implies that the radial limits
$$f^{\ast}(\zeta)=\lim_{r\rightarrow1^{-}}f(r\zeta)$$ exist for almost every $\zeta\in\mathbf{S}^{n-1}$  (cf. \cite[p.96]{Sto}).

Let $q$ be the conjugate exponent of $p$. Then

$$\|f\|_{p}=\sup\left\{\left|\int_{\mathbf{S}^{n-1}}f^{\ast}(\zeta)\mathbf{g}(\zeta)d\sigma(\zeta)\right|:~\mathbf{g}\in\,L^{q}(\mathbf{S}^{n-1}), ~\|\mathbf{g}\|_{L^{q}(\mathbf{S}^{n-1})}\leq1 \right\}.$$

Since $C(\mathbf{S}^{n-1})$ is dense in $L^{q}(\mathbf{S}^{n-1})$, the above supremum can be taken over $\mathbf{g}\in C(\mathbf{S}^{n-1})$.
In the following, we will show that $\|f\|_{p}<\infty$.
 Elementary calculations show that
\beq\label{chwm-7}
\Delta_{h}\big(fP_{h}[\mathbf{g}]\big)&=&f\Delta_{h}P_{h}[\mathbf{g}]+P_{h}[\mathbf{g}]\Delta_{h}f+2\langle\nabla^{h}f,\nabla^{h}P_{h}[\mathbf{g}]\rangle\\ \nonumber
&=&2\langle\nabla^{h}f,\nabla^{h}P_{h}[\mathbf{g}]\rangle,
\eeq
and thus, Theorem L gives
\beq\label{xchw-1-01}\nonumber
\left|\int_{\mathbf{S}^{n-1}}f(r\zeta)P_{h}[\mathbf{g}](r\zeta)d\sigma(\zeta)\right|&\leq&
\left|\int_{\mathbf{B}^{n}_{r}}g_{n}(|x|,r)\Delta_{h}\big(f(x)P_{h}[\mathbf{g}](x)\big)dV_{h}(x)\right|\\ \nonumber
&&+\left|f(0)P_{h}[\mathbf{g}](0)\right|\\ \nonumber
&=&
2\left|\int_{\mathbf{B}^{n}_{r}}g_{n}(|x|,r)\langle\nabla^{h}f(x),\nabla^{h}P_{h}[\mathbf{g}](x)\rangle\,dV_{h}(x)\right|\\ \nonumber
&&+\left|f(0)P_{h}[\mathbf{g}](0)\right|
\\ \nonumber
&\leq&\left|f(0)\right|+2\mathscr{Y}(r),
\eeq
where
$$\mathscr{Y}(r)=\int_{\mathbf{B}^{n}_{r}}g_{n}(|x|,r)\left|\nabla^{h}f_{j}(x)\right|\left|\nabla^{h}P_{h}[\mathbf{g}](x)\right|dV_{h}(x).$$

Since (\ref{eq-pl-2}) and H\"older's inequality lead to
\beq\label{chwm-8}
\mathscr{Y}(r)&\leq&\int_{\mathbf{B}^{n}}g_{n}(|x|,1)\left|\nabla^{h}f(x)\right|\left|\nabla^{h}P_{h}[\mathbf{g}](x)\right|dV_{h}(x)\\ \nonumber
&\leq&\frac{2^{n-2}}{n-1}\int_{\mathbf{S}^{n-1}}
\left(\int_{0}^{1}\frac{\left|\nabla^{h}f(\rho\zeta)\right|\left|\nabla^{h}P_{h}[\mathbf{g}](\rho\zeta)\right|}{1-\rho}d\rho\right)d\sigma(\zeta)\\ \nonumber
&\leq&\frac{2^{n-2}}{n-1}\int_{\mathbf{S}^{n-1}}
\left(\int_{0}^{1}\frac{\left|\nabla^{h}f(\rho\zeta)\right|\left|\nabla^{h}P_{h}[\mathbf{g}](\rho\zeta)\right|}{1-\rho}d\rho\right)d\sigma(\zeta)\\ \nonumber
&\leq&\frac{2^{n-2}}{n-1}\|\widetilde{g}_{f}\|_{L^{p}(\mathbf{S}^{n-1})}\|\widetilde{g}_{P_{h}[\mathbf{g}]}\|_{L^{q}(\mathbf{S}^{n-1})},
\eeq
 we have from ($b_{1}$) and \cite[Theorem 7.1.1]{Sto} that there is a positive constant $C(n,q)$
such that

\be\label{chwm-10}
\|\widetilde{g}_{P_{h}[\mathbf{g}]}\|_{L^{p}(\mathbf{S}^{n-1})}\leq\,C(n,q)\|P_{h}[\mathbf{g}]\|_{q}  =C(n,q)\|\mathbf{g}\|_{L^{q}(\mathbf{S}^{n-1})}
  \leq\,C(n,q).
\ee

From (\ref{chwm-8}) and (\ref{chwm-10}), we obtain
\beqq\label{chwm-11}
\mathscr{Y}(r)\leq\,\frac{2^{n-2}}{n-1}C(n,q)\|\widetilde{g}_{f}\|_{L^{p}(\mathbf{S}^{n-1})},
\eeqq
which, together with
 (\ref{xchw-1-01}),
 gives

\beqq
\|f\|_{p}&\leq& \sup_{r\in(0,1)}\sup_{\mathbf{g}\in\,L^{q}(\mathbf{S}^{n}), \|\mathbf{g}\|_{L^{q}(\mathbf{S}^{n-1})}\leq1}\left|\int_{\mathbf{S}^{n-1}}f(r\zeta)P_{h}[\mathbf{g}](r\zeta)d\sigma(\zeta)\right|\\
&\leq& \left|f(0)\right|
+\frac{2^{n-1}}{n-1}C(n,q)\|\widetilde{g}_{f}\|_{L^{p}(\mathbf{S}^{n-1})}.
\eeqq
The proof of this lemma is complete.
\end{proof}

\subsection*{Proof of Theorem \ref{thm-4}}
Clearly, Theorem \ref{thm-4} reduces to Theorem \ref{thm-1} when $n=2$. A detailed proof of Theorem \ref{thm-1} will be presented in Section \ref{csw-sec2}.
Hence we only need to consider the case when $n\geq 3$.
Without loss of generality, we assume that $f_{1}\in\mathbb{H}^{p}(\mathbf{B}^{n}, \mathbb{R})$.
We separate the proof into two cases.

\noindent $\mathbf{Case~1.}$\label{chwk-1}
Suppose $p\in(1,2]$.

Let us introduce two sequences of functions which are as follows:
For $m\in\{1,2,\ldots\}$, define $$\mathcal{F}_{m}(x)=\left(|f(x)|^{2}+\frac{1}{m}\right)^{\frac{1}{2}}~\mbox{and}~\mathcal{F}_{1,m}(x)=\left(|f_{1}(x)|^{2}+\frac{1}{m}\right)^{\frac{1}{2}}$$ in $\mathbf{B}^{n}$.

In the following, we get an upper bound for $\Delta_{h}(\mathcal{F}_{m}^{p})$ in terms of $\Delta_{h}(\mathcal{F}_{1,m}^{p})$ as stated in \eqref{10-28-1}.
On the one hand, since
\beqq
\nabla(\mathcal{F}_{m}^{p})=\frac{p}{2}\left(|f|^{2}+\frac{1}{m}\right)^{\frac{p}{2}-1}\nabla\left(|f|^{2}\right)
\eeqq
and
\beqq
\Delta(\mathcal{F}_{m}^{p})&=&\frac{p}{2}\left(\frac{p}{2}-1\right)\left(|f|^{2}+\frac{1}{m}\right)^{\frac{p}{2}-2}\left|\nabla\left(|f|^{2}\right)\right|^{2}\\
&&+\frac{p}{2}
\left(|f|^{2}+\frac{1}{m}\right)^{\frac{p}{2}-1}\Delta\left(|f|^{2}\right),
\eeqq
we see that
\beq\label{eq-ry-1}
\Delta_{h}(\mathcal{F}_{m}^{p})&=&(1-|x|^{2})^{2}\Delta(\mathcal{F}_{m}^{p})+2(n-2)(1-|x|^{2})\langle \nabla(\mathcal{F}_{m}^{p}),x\rangle\\ \nonumber
&=&\frac{p}{2}\left(\frac{p}{2}-1\right)\left(|f|^{2}+\frac{1}{m}\right)^{\frac{p}{2}-2}(1-|x|^{2})^{2}\left|\nabla\left(|f|^{2}\right)\right|^{2}\\ \nonumber
&&+\frac{{p}{2}^{-1}\Delta_{h}\big(|f|^{2}\big)}{\left(|f|^{2}+\frac{1}{m}\right)^{1-\frac{p}{2}}}.
\eeq

Note that
\beq\label{eq-ry-2}
\Delta_{h}\left(|f|^{2}\right)&=&\Delta_{h}\left(\sum_{j=1}^{n}f_{j}^{2}\right)\\ \nonumber
&=&2\sum_{j=1}^{n}\left[(1-|x|^{2})^{2}|\nabla f_{j}|^{2}+f_{j}\Delta_{h}f_{j}\right]\\ \nonumber
&=&2\sum_{j=1}^{n}(1-|x|^{2})^{2}|\nabla f_{j}|^{2}.
\eeq

Since the assumption of $f$ being $K$-quasiregular in $\mathbf{B}^{n}$ ensures that for all $k,j\in\{1,\ldots,n\}$,
\beqq
|\nabla f_{j}|\leq K|\nabla f_{k}|,
\eeqq
as a consequence, we know that for each $j\in\{2,\ldots,n\}$,
\be\label{KK-1}
|\nabla f_{j}|\leq K|\nabla f_{1}|.
\ee

Since $p\in(1,2]$ (as assumed in this case), we infer from (\ref{eq-ry-1}), (\ref{eq-ry-2}) and (\ref{KK-1}) that

\beq\label{eq-KK-2}
\Delta_{h}(\mathcal{F}_{m}^{p})&\leq& p(1-|x|^{2})^{2}\left(|f|^{2}+\frac{1}{m}\right)^{\frac{p}{2}-1}\sum_{j=1}^{n}|\nabla f_{j}|^{2}\\ \nonumber
&\leq&p[1+(n-1)K^{2}](1-|x|^{2})^{2}\left(|f|^{2}+\frac{1}{m}\right)^{\frac{p}{2}-1}|\nabla f_{1}|^{2}.
\eeq

On the other hand, elementary computations ensure that
\beqq\label{eq-KK-3}
\Delta_{h}(\mathcal{F}_{1,m}^{p})
&=&\frac{p}{2}\left(\frac{p}{2}-1\right)\left(f_{1}^{2}+\frac{1}{m}\right)^{\frac{p}{2}-2}(1-|x|^{2})^{2}\left|\nabla\left(f_{1}^{2}\right)\right|^{2}\\ \nonumber
&&+\frac{p}{2}\left(f_{1}^{2}+\frac{1}{m}\right)^{\frac{p}{2}-1}\Delta_{h}\left(f_{1}^{2}\right)\\ \nonumber
&=&p(p-2)(1-|x|^{2})^{2}\left(f_{1}^{2}+\frac{1}{m}\right)^{\frac{p}{2}-2}f_{1}^{2}|\nabla f_{1}|^{2}\\ \nonumber
&&+p(1-|x|^{2})^{2}|\nabla f_{1}|^{2}\left(f_{1}^{2}+\frac{1}{m}\right)^{\frac{p}{2}-1}\\ \nonumber
&\geq&p(p-1)(1-|x|^{2})^{2}|\nabla f_{1}|^{2}\left(f_{1}^{2}+\frac{1}{m}\right)^{\frac{p}{2}-1}.
\eeqq
Then we obtain from (\ref{eq-KK-2}) that
\beq\label{10-28-1}
\Delta_{h}(\mathcal{F}_{1,m}^{p})\geq\frac{p-1}{1+(n-1)K^{2}}\Delta_{h}(\mathcal{F}_{m}^{p}).
\eeq

Now, we ready to finish the proof. As Theorem L leads to
\beq\label{eq-KK-4}\nonumber
\int_{\mathbf{S}^{n-1}}(\mathcal{F}_{1,m}(r\zeta))^{p}d\sigma(\zeta)&=&(\mathcal{F}_{1,m}(0))^{p}+
\int_{\mathbf{B}^{n}_r}g(|x|,r)\Delta_{h}(\mathcal{F}_{1,m}(x))^{p}dV_{h}(x)\\ \nonumber
&\geq&\frac{p-1}{1+(n-1)K^{2}}\int_{\mathbf{B}^{n}_r}g(|x|,r)\Delta_{h}(\mathcal{F}_{m}(x))^{p}dV_{h}(x)\\ \nonumber
&&\frac{p-1}{1+(n-1)K^{2}}(\mathcal{F}_{m}(0))^{p}+(\mathcal{F}_{1,m}(0))^{p}\\ \nonumber
&&-\frac{p-1}{1+(n-1)K^{2}}(\mathcal{F}_{m}(0))^{p}\\
&=&\frac{p-1}{1+(n-1)K^{2}}\int_{\mathbf{S}^{n-1}}(\mathcal{F}_{m}(r\zeta))^{p}d\sigma(\zeta)\\  \nonumber
&&+(\mathcal{F}_{1,m}(0))^{p}-\frac{p-1}{1+(n-1)K^{2}}(\mathcal{F}_{m}(0))^{p},
\eeq
it follows from the assumption of $f_{1}\in\mathbb{H}^{p}(\mathbf{B}^{n}, \mathbb{R})$, together with (\ref{eq-KK-4}) and the
Lebesgue's control-convergent theorem, that
\beqq
\infty&>&\lim_{m\rightarrow\infty}\int_{\mathbf{S}^{n-1}}(\mathcal{F}_{1,m}(r\zeta))^{p}d\sigma(\zeta)\\
&\geq&\frac{p-1}{1+(n-1)K^{2}}\int_{\mathbf{S}^{n-1}}\lim_{m\rightarrow\infty}(\mathcal{F}_{m}(r\zeta))^{p}d\sigma(\zeta)\\ \nonumber
&&+\lim_{m\rightarrow\infty}(\mathcal{F}_{1,m}(0))^{p}-\frac{p-1}{1+(n-1)K^{2}}\lim_{m\rightarrow\infty}(\mathcal{F}_{m}(0))^{p}.
\eeqq
This gives
\beqq
\|f\|_{p}^{p}\leq|f(0)|^{p}+\frac{1+(n-1)K^{2}}{p-1}\left(\|f_{1}\|_{p}^{p}-|f_{1}(0)|^{p}\right),
\eeqq which is what we need.

\noindent $\mathbf{Case~2.}$
Suppose $p>2$.

Since $f_{1}\in\mathbb{H}^{p}(\mathbf{B}^{n}, \mathbb{R})$, by  Lemma \ref{Lem-chxw} ($b_{1}$), we see that there is a positive
constant $C_8=C_8(n,p)$ such that
\beq\label{eq-KK-53}
\left(\int_{\mathbf{S}^{n-1}}\left(\int_{0}^{1}\frac{|\nabla^{h}f_{1}(r\zeta)|^{2}}{1-r}dr\right)^{\frac{p}{2}}d\sigma(\zeta)\right)^{\frac{1}{p}}
&=&\|\widetilde{g}_{f_{1}}\|_{L^{p}(\mathbf{S}^{n-1})}\\ \nonumber
&\leq&\,C_8\|f_{1}\|_{p}.
\eeq

By the assumption of $f$ being $K$-quasiregular in $\mathbf{B}^{n}$, we see
\beqq
|\nabla f_{j}|\leq K|\nabla f_{k}|\ \ \forall\ \ k,j\in\{1,\ldots,n\}.
\eeqq
Consequently, we have that for each $j\in\{2,\ldots,n\}$,
\beqq\label{eq-KK-54}
|\nabla^{h} f_{j}(x)|^{2}&=&(1-|x|^{2})^{2}|\nabla f_{j}(x)|^{2}\leq\,K^{2}(1-|x|^{2})^{2}|\nabla f_{1}(x)|^{2}\\
&=&K^{2}|\nabla^{h} f_{1}(x)|^{2}\ \forall\ x\in\mathbf{B}^n,
\eeqq
and thus, (\ref{eq-KK-53}) gives
 \be\label{4-30-1}
 \|\widetilde{g}_{f_{j}}\|_{L^{p}(\mathbf{S}^{n-1})}\leq K
\|\widetilde{g}_{f_{1}}\|_{L^{p}(\mathbf{S}^{n-1})} \leq C_8K\|f_{1}\|_{p}.
 \ee

Since  Lemma \ref{Lem-chxw} ($b_{2}$) ensures that there exists a positive constant
$C_9=C_9(n,p)$ satisfying
\beqq\label{eq-KK-56}
\|f_{j}\|_{p}-|f_{j}(0)|\leq\,C_9\|\widetilde{g}_{f_{j}}\|_{L^{p}(\mathbf{S}^{n-1})},
\eeqq
we get
\be\label{eq-KK-56}
\|f_{j}\|_{p}-|f_{j}(0)|\leq C_8C_9K\|f_{1}\|_{p}.
\ee

As the Minkowski inequality implies
\beqq
 M_{p}(r,f)&=&
 \left(\int_{\mathbf{S}^{n-1}}\left(\sum_{j=1}^{n}|f_{j}(r\zeta)|^{2}\right)^{\frac{p}{2}}d\sigma(\zeta)\right)^{\frac{1}{p}}\leq \left(\sum_{j=1}^{n}M_{p}^{2}(r,f_{j})\right)^{\frac{1}{2}}\\
 &\leq&\sum_{j=1}^{n}M_{p}(r,f_{j}),
\eeqq
we infer from the assumption of $f_{1}\in\mathbb{H}^{p}(\mathbf{B}^{n}, \mathbb{R})$ and (\ref{eq-KK-56}) that
\beqq
\|f\|_{p}&\leq&\|f_{1}\|_{p}+\sum_{j=1}^{n}|f_{j}(0)|-|f_{1}(0)|+C_8C_9K(n-1)\|f_{1}\|_{p}\\
&\leq&\|f_{1}\|_{p}+\sqrt{n}|f(0)|-|f_{1}(0)|+C_8C_9K(n-1)\|f_{1}\|_{p}\\
&=&\left[C_8C_9K(n-1)+1\right]\|f_{1}\|_{p}-|f_{1}(0)|+\sqrt{n}|f(0)|,
\eeqq
which is what we need.

The proof of the sharpness of $p_{0}=1$ follows the same argument as in Theorem \ref{lem-im-1}.
\qed

\section{Riesz conjugate functions theorem for $\kappa$-pluriharmonic mappings}\label{csw-sec2}

The purpose of this section is to prove Proposition \ref{Prop-1} and Theorem \ref{thm-1}. Let us start the section with the following concept.

For a function $f\in \mathscr{H}(\mathbb{B}^{n}, \mathbb{C}^{n})$ $($i.e., $f$ is a holomorphic function of $\mathbb{B}^{n}$ into $\mathbb{C}^{n}$$)$, if
$$\|Df(z)\|\leq K|\det Df(z)|^{\frac{1}{n}},$$ then it is called a {\it Wu $K$-mapping} (see \cite{C-G,Wu}).

The following result concerning the relationship between holomorphic quasiregular mappings and Wu $K$-mappings is useful for the proof of Proposition \ref{Prop-1}.

\begin{ThmM}{\rm (cf. \cite[p.1373 and p.1385]{C-G})} \label{Wu-CG}
Every holomorphic $K$-quasiregular mapping is a Wu $K^{1-\frac{1}{n}}$-mapping, and
every Wu $K$-mapping is a $K^{n}$-quasiregular mapping.
\end{ThmM}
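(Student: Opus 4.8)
The plan is to peel Theorem~M down to a pointwise statement about the singular values of the complex derivative and then to finish it with two one-line applications of the arithmetic--geometric mean inequality. Note first that a holomorphic $K$-quasiregular mapping and a Wu $K$-mapping are both holomorphic on all of $\mathbb{B}^{n}$, hence $C^{\infty}$, hence automatically in $W^{1}_{p,\mathrm{loc}}(\mathbb{B}^{n})$ for every $p$; so the Sobolev-regularity clause in the definition of quasiregularity carries no content here, and only the two pointwise dilatation inequalities have to be compared. Fix $z\in\mathbb{B}^{n}$, write $A=Df(z)$ for the (complex-linear) differential, and let $\sigma_{1}\geq\sigma_{2}\geq\cdots\geq\sigma_{n}\geq0$ be its singular values. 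The first step is to record the identities
\[
\|A\|=\sigma_{1},\qquad |\det A|=\prod_{j=1}^{n}\sigma_{j},\qquad \ell(A):=\min_{|\xi|=1}|A\xi|=\sigma_{n},
\]
so that the quantity $|\det Df(z)|^{1/n}$ appearing in the Wu condition is exactly the geometric mean of the $\sigma_{j}$, together with the elementary two-sided bound
\[
\sigma_{1}\,\sigma_{n}^{\,n-1}\;\leq\;\prod_{j=1}^{n}\sigma_{j}\;\leq\;\sigma_{1}^{\,n-1}\,\sigma_{n},
\]
each inequality being immediate from $\sigma_{n}\leq\sigma_{j}\leq\sigma_{1}$ for every $j$. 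The points where $\det A=0$ form a proper analytic subset of $\mathbb{B}^{n}$ (the case $f\equiv\mathrm{const}$ being trivial and excluded), so they may be disregarded, and elsewhere all $\sigma_{j}>0$.

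Next I would invoke the precise pointwise form of $K$-quasiregularity for holomorphic mappings used in \cite{C-G}, namely that $\|Df(z)\|\leq K\,\ell(Df(z))$, i.e.\ $\sigma_{1}\leq K\sigma_{n}$, for a.e.\ $z$. Granting this, both assertions drop out. For the first, $\sigma_{n}\geq\sigma_{1}/K$ together with the left-hand bound above gives
\[
\prod_{j=1}^{n}\sigma_{j}\;\geq\;\sigma_{1}\Big(\frac{\sigma_{1}}{K}\Big)^{n-1}=\frac{\sigma_{1}^{\,n}}{K^{\,n-1}},
\]
whence $\|Df(z)\|=\sigma_{1}\leq K^{1-1/n}\big(\prod_{j}\sigma_{j}\big)^{1/n}=K^{1-1/n}\,|\det Df(z)|^{1/n}$, which is precisely the Wu $K^{1-1/n}$-inequality. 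For the converse, a Wu $K$-mapping satisfies $\sigma_{1}^{\,n}\leq K^{\,n}\prod_{j}\sigma_{j}$; substituting the right-hand bound above yields $\sigma_{1}^{\,n}\leq K^{\,n}\sigma_{1}^{\,n-1}\sigma_{n}$, hence $\sigma_{1}\leq K^{\,n}\sigma_{n}$, i.e.\ $\|Df(z)\|\leq K^{\,n}\,\ell(Df(z))$ for a.e.\ $z$, which is the statement that $f$ is $K^{\,n}$-quasiregular. (One checks moreover that the extremal configurations $\sigma_{1}=K,\ \sigma_{2}=\cdots=\sigma_{n}=1$ and $\sigma_{1}=\cdots=\sigma_{n-1}=K^{n},\ \sigma_{n}=1$ make both inequalities tight, so the exponents $1-1/n$ and $n$ cannot be lowered.)

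The rest is bookkeeping that I would dispose of quickly: that $\|A\|$ equals the operator norm of the underlying real-linear differential, that $J_{f}(z)=|\det Df(z)|^{2}$ for holomorphic $f$ (so that, should one prefer to phrase quasiregularity through $J_{f}$ under the identification $\mathbb{C}^{n}\cong\mathbb{R}^{2n}$, the pointwise inequality must be rewritten accordingly), the measurability of $z\mapsto\sigma_{j}(z)$, and the reduction of the $W^{1}_{p,\mathrm{loc}}$ hypothesis to smoothness. I anticipate no analytic obstacle at all; the entire content of the argument is the two arithmetic--geometric mean steps displayed above, and the only thing that could derail it is a mismatch in the normalization of ``$K$-quasiregular''. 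That is exactly why pinning down its correct pointwise characterization for holomorphic mappings — and recognizing that it is the linear-dilatation bound $\sigma_{1}\leq K\sigma_{n}$, the one under which the stated constants are sharp — is the first thing I would nail down.
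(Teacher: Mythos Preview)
The paper does not supply its own proof of Theorem~M; it is quoted as a known fact from Chen--Gauthier \cite{C-G}, so there is nothing to compare your argument against. Your singular-value computation is correct and is essentially the standard one: both implications reduce to the elementary sandwich $\sigma_{1}\sigma_{n}^{\,n-1}\le\prod_{j}\sigma_{j}\le\sigma_{1}^{\,n-1}\sigma_{n}$, and your extremal configurations do show the exponents are sharp.

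Your caveat about normalization is well placed and in fact the only substantive point. The constants $K^{1-1/n}$ and $K^{n}$ in Theorem~M come out correctly \emph{only} under the linear-dilatation convention $\|Df(z)\|\le K\,\ell(Df(z))$, i.e.\ $\sigma_{1}\le K\sigma_{n}$, which is indeed the one used in \cite{C-G}. This is \emph{not} literally the outer-dilatation definition $|f'(x)|^{m}\le K\,J_{f}(x)$ recorded in \S1.2 of the present paper (under that definition, applied in $\mathbb{R}^{2n}$ via $J_{f}=|\det Df|^{2}$, a holomorphic $K$-quasiregular map is tautologically a Wu $K^{1/(2n)}$-map, and neither exponent in Theorem~M would be right). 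So in a clean write-up you should state explicitly that ``holomorphic $K$-quasiregular'' here is taken in the sense of \cite{C-G}, namely $\sigma_{1}\le K\sigma_{n}$; once that is fixed, your argument is complete. The aside about excluding the zero set of $\det Df$ is harmless but unnecessary, since at those points $Df=0$ and both inequalities hold trivially.
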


\subsection*{Proof of Proposition \ref{Prop-1}}

To prove the necessity part of the statement (1), assume that $f=h+\overline{g}\in \mathscr{PH}_{1}(\kappa)$. Then we have $$|g'(z)(h'(z))^{-1}|\leq\kappa$$ in $\mathbb{D}$.
This gives $$\frac{|h'(z)|+|g'(z)|}{|h'(z)|-|g'(z)|}\leq K,$$
which implies that
$$\left(|Df(z)|+|\overline{D}f(z)|\right)^{2}=(|h'(z)|+|g'(z)|)^{2}\leq K(|h'(z)|^{2}-|g'(z)|^{2})= KJ_{f}(z),$$
where $K=(1+\kappa)/(1-\kappa)$. Hence $f$ is a planar harmonic $K$-quasiregular mapping.

To show the sufficiency  part of the statement (1), assume that $f$ is a planar harmonic $K$-quasiregular mapping. Then it follows from \eqref{KQR} that
$$\left(|h'|+|g'|\right)^{2}\leq KJ_{f}=|h'|^{2}-|g'|^{2},$$
and thus, we get
$$\sup_{z\in\mathbb{D}}\left\{\frac{|g'(z)|}{|h'(z)|}\right\}\leq\kappa=(K-1)/(K+1).
$$
This demonstrates that $f\in\mathscr{PH}_{1}(\kappa)$.

For the proof of the necessity part of the statement (2), assume that $$f=h+\overline{g}\in \mathscr{PH}_{n}(\kappa)$$ is $K$-quasiregular with $K\geq 1$. Let
$$
\begin{cases}
f=(f_{1},\ldots,f_{n});\\
z=(z_{1},\ldots,z_{n});\\
f_{j}=u_{j}+iv_{j};\\
z_{j}=x_{j}+iy_{j}.
\end{cases}
$$
Then for $z\in\mathbb{B}^{n}$, we have
\be\label{gy-1}\Lambda_{f}(z):=\max_{\theta\in\mathbf{S}^{2n-1}}\{|J_{f}(z)\theta|\}\leq\, K|\det J_{f}(z)|^{\frac{1}{2n}},\ee
where \begin{equation*}
J_{f}=
\begin{bmatrix}
\frac{\partial u_{1}}{\partial x_{1}}&\frac{\partial
u_{1}}{\partial y_{1}}&\frac{\partial u_{1}}{\partial x_{2}}&\frac{\partial u_{1}}{\partial y_{2}}&\cdots&\frac{\partial u_{1}}{\partial x_{n}}&\frac{\partial u_{1}}{\partial y_{n}}\\
\frac{\partial v_{1}}{\partial x_{1}}&\frac{\partial
v_{1}}{\partial y_{1}}&\frac{\partial v_{1}}{\partial x_{2}}&\frac{\partial v_{1}}{\partial y_{2}}&\cdots&\frac{\partial v_{1}}{\partial x_{n}}&\frac{\partial v_{1}}{\partial y_{n}}\\
\vdots&\vdots&\vdots&\vdots&\ddots&\vdots&\vdots\\
\frac{\partial u_{n}}{\partial x_{1}}&\frac{\partial
u_{n}}{\partial y_{1}}&\frac{\partial u_{n}}{\partial x_{2}}&\frac{\partial u_{n}}{\partial y_{2}}&\cdots&\frac{\partial u_{n}}{\partial x_{n}}&\frac{\partial u_{n}}{\partial y_{n}}\\
\frac{\partial v_{n}}{\partial x_{1}}&\frac{\partial
v_{n}}{\partial y_{1}}&\frac{\partial v_{n}}{\partial x_{2}}&\frac{\partial v_{n}}{\partial y_{2}}&\cdots&\frac{\partial v_{n}}{\partial x_{n}}&\frac{\partial v_{n}}{\partial y_{n}}
\end{bmatrix}.
\end{equation*}
By calculations, we obtain

\beqq
|\det J_{f}|&=&\left|\det\begin{bmatrix}
Dh&\overline{Dg}\\
Dg&\overline{Dh}
\end{bmatrix}\right|
=
|\det Dh|^{2}\left|\det\left(I_{n}-\omega_{f}\overline{\omega_{f}}\right)\right|\\
&\leq&|\det Dh|^{2}\left\|I_{n}-\omega_{f}\overline{\omega_{f}}\right\|^{n}\\
&\leq&|\det Dh|^{2}\left(\|I_{n}\|+\left\|\omega_{f}\right\|^{2}\right)^{n}\\
&\leq&|\det Dh|^{2}(1+\kappa^{2})^{n},
\eeqq
which guarantees
\be\label{gy-2}
|\det J_{f}|^{\frac{1}{2n}}\leq\sqrt{1+\kappa^{2}}|\det Dh|^{\frac{1}{n}}.
\ee

Since
\beqq
\Lambda_{f}\geq\|Dh\|\left(1-\left\|\omega_{f}\right\|\right)\geq\|Dh\|(1-\kappa),
\eeqq
we deduce from (\ref{gy-1}) and (\ref{gy-2}) that
\beqq
\|Dh\|(1-\kappa)\leq\, K\sqrt{1+\kappa^{2}}|\det Dh|^{\frac{1}{n}},
\eeqq
whence
$$\|Dh\|\leq\frac{K\sqrt{1+\kappa^{2}}}{1-\kappa}|\det Dh|^{\frac{1}{n}}.$$
Then Theorem M ensures that $h$ is $K^{\ast}$-quasiregular, where $$K^{\ast}=\left(\frac{K\sqrt{1+\kappa^{2}}}{1-\kappa}\right)^{n}.$$

To check the sufficiency part of the statement (2), assume that $h$ is $K^*$-quasiregular with $K^*\geq 1$. Then it follows from Theorem M that
$h$ is a Wu $(K^*)^{1-1/n}$-mapping, which implies
\be\label{gy-3}\|Dh(z)\|\leq\, (K^*)^{1-\frac{1}{n}}|\det Dh(z)|^{\frac{1}{n}}\ \ \forall\ \ z\in\mathbb{B}^{n}.
\ee

Since
\beqq
|\det J_{f}(z)|&=&|\det\, Dh(z)|^{2}\left|\det\left(I_{n}-\omega_{f}(z)\overline{\omega_{f}(z)}\right)\right|\\
&\geq&|\det Dh(z)|^{2}\min_{\theta\in\partial\mathbb{B}^{n}}
\left\{\left|\left(I_{n}-\omega_{f}(z)\overline{\omega_{f}(z)}\right)\theta\right|^{n}\right\}\\
&\geq&|\det Dh(z)|^{2}\left(1-\|\omega_{f}(z)\|^{2}\right)^{n}\\
&\geq&|\det Dh(z)|^{2}(1-\kappa^{2})^{n},
\eeqq
we see that
\be\label{gy-55}
|\det Dh(z)|^{\frac{1}{n}}\leq\frac{1}{\sqrt{1-\kappa^{2}}}|\det J_{f}(z)|^{\frac{1}{2n}}.
\ee

Moreover, we infer from (\ref{gy-3}) that
\beqq
\Lambda_{f}(z)&=&\max_{\theta\in\mathbf{S}^{2n-1}}\{|J_{f}(z)\theta|\}=
\max_{\vartheta\in\partial\mathbb{B}^{n}}\left\{\left|Dh(z)\vartheta+\overline{Dg(z)}\vartheta\right|\right\}\\
&\leq&\|Dh(z)\|+\|Dh(z)[Dh(z)]^{-1}Dg(z)\|\\
&\leq&\|Dh(z)\|(1+\kappa)\\
&\leq& (K^*)^{1-\frac{1}{n}}(1+\kappa)|\det\, Dh(z)|^{\frac{1}{n}},
\eeqq
which, together with (\ref{gy-55}), ensures
$$\Lambda_{f}(z)\leq\,(K^*)^{1-\frac{1}{n}}\sqrt{\frac{1+\kappa}{1-\kappa}}|\det J_{f}(z)|^{\frac{1}{2n}}.$$
This illustrates that $f$ is a $K$-quasiregular mapping, where $$K=(K^*)^{1-\frac{1}{n}}\sqrt{\frac{1+\kappa}{1-\kappa}}.$$

Finally, we prove the statement (3) based on Proposition \ref{Prop-1}{\rm (2)}.
Let
$$
\begin{cases}
f=h+\kappa\overline{h}\ \ \forall\ \ \kappa\in[0,1);\\
h(z)=\left(z_{1},z_{2}+1/(1-z_{1})\right)\ \ \forall\ \  z=(z_{1},z_{2})\in\mathbb{B}^{2}.
\end{cases}
$$
It is not difficult to know that $\omega_f=\kappa$ and $h$ is biholomorphic. Hence $$f\in\mathscr{PH}_{2}(\kappa).$$
 However,
 $$
 \begin{cases} \sup_{z\in\mathbb{B}^{2}}\{\|Dh(z)\|\}=\infty;\\ J_{h}(z)=|\det Dh(z)|^{2}=1.
 \end{cases}
 $$
 These show that
$h$ is not $K$-quasiregular for any $K\geq 1$, and thus, it follows from Proposition \ref{Prop-1}{\rm (2)} that $f$ is not $K$-quasiregular for any $K\geq 1$.
\qed
\medskip

Before the proof of Theorem \ref{thm-1}, let us recall several useful results.
Let $\Omega\subset\mathbb{C}^{n}$ be a bounded symmetric domain containing the origin $0$ with Bergman-Silov boundary
$b$. Denote by $\Gamma$ the group of holomorphic automorphisms of $\Omega$, and by $\Gamma_{0}$ the
isotropy group of $\Gamma$. It is well known that $\Omega$ is circular and star-shaped with respect to
$0$ and that $b$ is circular. The group $\Gamma_{0}$ is transitive on $b$ and $b$ has a unique normalized
 $\Gamma_{0}$-invariant measure $\sigma$ with $\sigma(b)=1$ (see \cite{Bo,Hua,Lo}). Obviously, the unit polydisk and the
 unit ball $\mathbb{B}^{n}$ are bounded symmetric domains.
For convenience, we use the notation $\Omega_{0, b}$ to denote this class of bounded symmetric domains in $\mathbb{C}^{n}$.

\begin{ThmN}{\rm (\cite[Corollary 2.2]{CH-2023})}\label{Lem-A}
Let $g=u+iv\in \mathscr{H}(\Omega_{0, b}, \mathbb{C})$ $($i.e., $g$ is a holomorphic function of $\Omega_{0, b}$ into $\mathbb{C}$$)$ with $v(0)=0$. If
$$u\in \mathbf{h}^{p}(\Omega_{0, b},\mathbb{R})\ \ \text{ for some}\ \ p\in (1, \infty),
$$
then
$$
 \frac{\| v\|_p}{\cos(\pi/(2p^{\ast}))} \leq  \| g\|_p
\leq
\frac{\|u\|_p}{\sin(\pi/(2p^{\ast}))},
$$ where $p^{\ast}$ is defined in Theorem \ref{thm-1}.
\end{ThmN}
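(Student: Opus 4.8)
The plan is to reduce the statement on the bounded symmetric domain $\Omega_{0, b}$ to the classical sharp one-variable Riesz inequalities on the disk $\mathbb{D}$, by slicing $g$ along complex lines through the origin and then exploiting the circular symmetry of the Silov boundary $b$. Throughout set $\beta=\pi/(2p^{\ast})\in(0,\pi/4]$, so that $\cos\beta>0$, $\sin\beta>0$, and the two constants to be produced are exactly $\cos\beta$ and $\sin\beta$. Since $\Omega$ is circular and star-shaped with respect to $0$ and $b$ is circular with normalized $\Gamma_{0}$-invariant measure $\sigma$, for $\zeta\in b$, $r\in(0,1)$ and $\lambda\in\overline{\mathbb{D}}$ the point $r\lambda\zeta$ lies in $\Omega$, and the holomorphic Hardy norm admits the boundary representation $\|h\|_p^p=\sup_{0<r<1}\int_b|h(r\zeta)|^p\,d\sigma(\zeta)$ (see \cite{Bo,Hua,Lo}).

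First I would freeze $r\in(0,1)$ and $\zeta\in b$ and pass to the one-variable slice $F_{r,\zeta}(\lambda)=g(r\lambda\zeta)$, which is holomorphic on a neighbourhood of $\overline{\mathbb{D}}$, hence bounded there; writing $F_{r,\zeta}=U_{r,\zeta}+iV_{r,\zeta}$ we have $U_{r,\zeta}(\lambda)=u(r\lambda\zeta)$, $V_{r,\zeta}(\lambda)=v(r\lambda\zeta)$ and $V_{r,\zeta}(0)=v(0)=0$. Because $F_{r,\zeta}$ extends analytically across $\partial\mathbb{D}$, its one-variable Hardy norm equals the boundary mean at radius one, so applying the classical two-sided sharp Riesz conjugate-function inequalities on $\mathbb{D}$ (Pichorides and Cole, in the refined form of \cite{Pi,Gam,Ve}) to $F_{r,\zeta}$ gives, for every such $r$ and $\zeta$,
$$\frac{1}{\cos^{p}\beta}\int_0^{2\pi}|v(re^{i\theta}\zeta)|^{p}\,\frac{d\theta}{2\pi}\le\int_0^{2\pi}|g(re^{i\theta}\zeta)|^{p}\,\frac{d\theta}{2\pi}\le\frac{1}{\sin^{p}\beta}\int_0^{2\pi}|u(re^{i\theta}\zeta)|^{p}\,\frac{d\theta}{2\pi}.$$
Working at a fixed radius $r<1$, where $F_{r,\zeta}$ is bounded, is precisely what removes all a priori integrability concerns and lets the one-variable theorem apply verbatim.

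Next I would integrate in $\zeta$ against $d\sigma$ and collapse the superfluous circle average. As $b$ is circular and $\sigma$ is the normalized $\Gamma_{0}$-invariant measure, $\sigma$ is invariant under $\zeta\mapsto e^{i\theta}\zeta$; hence for any $\Phi\in L^{1}(b)$, Fubini gives
$$\int_b\int_0^{2\pi}\Phi(e^{i\theta}\zeta)\,\frac{d\theta}{2\pi}\,d\sigma(\zeta)=\int_0^{2\pi}\Big(\int_b\Phi(\zeta)\,d\sigma(\zeta)\Big)\frac{d\theta}{2\pi}=\int_b\Phi(\zeta)\,d\sigma(\zeta).$$
Taking $\Phi(\zeta)=|g(r\zeta)|^{p}$, and likewise $|u(r\zeta)|^{p}$ and $|v(r\zeta)|^{p}$, converts the slice inequality into $\cos^{-p}\beta\int_b|v(r\zeta)|^{p}\,d\sigma\le\int_b|g(r\zeta)|^{p}\,d\sigma\le\sin^{-p}\beta\int_b|u(r\zeta)|^{p}\,d\sigma$ for every $r\in(0,1)$. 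The right-hand bound already yields $\int_b|g(r\zeta)|^{p}\,d\sigma\le\sin^{-p}\beta\,\|u\|_p^{p}<\infty$, so $g\in\mathbf{H}^{p}(\Omega_{0, b},\mathbb{C})$, and taking the supremum over $r$ produces $\|v\|_p/\cos\beta\le\|g\|_p\le\|u\|_p/\sin\beta$, as claimed.

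I expect the main obstacle to be the legitimacy of transporting the sharp one-dimensional constants to $\Omega_{0, b}$ through this slicing-and-averaging scheme: one must justify the Silov-boundary representation of $\|\cdot\|_p$, verify that the circle average genuinely commutes with $\int_b d\sigma$ (which is exactly where the circularity of $b$ and the $\Gamma_{0}$-invariance of $\sigma$ enter), and check the measurability needed for Fubini uniformly in $\zeta$. The one-variable input is classical and only needs to be quoted, so the several-variable content resides entirely in this symmetry reduction rather than in any new analytic estimate.
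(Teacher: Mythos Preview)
The paper does not supply its own proof of Theorem~N: it is quoted verbatim as \cite[Corollary~2.2]{CH-2023} and then used as a black box in the proof of Theorem~\ref{thm-1}. So there is no in-paper argument to compare against.

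That said, your proposal is correct and is essentially the natural (and presumably the original) proof. The reduction to the disk via the slices $\lambda\mapsto g(r\lambda\zeta)$ is legitimate because $\Omega$ is circular and star-shaped, so for $r<1$ each slice is holomorphic on a disk of radius $>1$; the sharp one-variable two-sided inequality $\|v\|_p\le\cos\beta\,\|g\|_p$ and $\|g\|_p\le\|u\|_p/\sin\beta$ (with $\beta=\pi/(2p^{\ast})$ and $v(0)=0$) is exactly the Pichorides--Verbitsky--Hollenbeck circle of results you cite; and the averaging step works because the scalar rotations $z\mapsto e^{i\theta}z$ lie in $\Gamma_0$, so $\sigma$ is rotation-invariant and Fubini collapses the inner circle integral. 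Taking $\sup_{r<1}$ then gives the stated norm inequalities. The caveats you flag (measurability in $\zeta$, justification of Fubini) are routine since $g$ is continuous on $r\overline{\Omega}$ for each fixed $r<1$.
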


Let us recall the following Green formula which will be applied in the proof of Theorem \ref{thm-1}.

\begin{ThmO}{\rm (\cite[Lemma 4.1]{P-09})}\label{Green}
Suppose that $n\geq2$ and $\psi$ is a twice continuously differentiable function of $\mathbf{B}^{n}$. Then
\beqq\label{eq-1}
\int_{\mathbf{S}^{n-1}}\psi(r\zeta)\,d\sigma(\zeta)=
\psi(0)+\int_{\mathbf{B}^{n}_r}\Delta \psi(x)G_{n}(x,r)\,dV_{N}(x)\ \ \forall\ \ r\in (0,1),
\eeqq
 where
\beqq\label{Ge}
G_{n}(x,r)
=
\left\{
\begin{array}{ll}
\frac{|x|^{2-n}-r^{2-n}}{n(n-2)}, & n\geq3,
\\
\frac{1}{2}\log\frac{r}{|x|}, & n=2.
\end{array}
\right.
\eeqq
\end{ThmO}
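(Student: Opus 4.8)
The plan is to prove this Green-type spherical-mean identity directly, by reducing it to the one-variable analysis of the spherical average of $\psi$ combined with the divergence theorem; the kernel $G_n(x,r)$ will then emerge automatically as an iterated radial integral, with the correct normalizing constants. Rather than invoking the Green's function of the ball abstractly, I would recover it by this elementary computation, which makes the constants transparent.

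First I would fix $r\in(0,1)$ and set
$$A(\rho)=\int_{\mathbf{S}^{n-1}}\psi(\rho\zeta)\,d\sigma(\zeta),\qquad \rho\in(0,r].$$
Since $\psi\in C^{2}(\mathbf{B}^{n})$, differentiation under the integral sign is legitimate on compact subintervals of $(0,1)$ and gives $A'(\rho)=\int_{\mathbf{S}^{n-1}}\langle\nabla\psi(\rho\zeta),\zeta\rangle\,d\sigma(\zeta)$, which is the average over $\mathbf{S}^{n-1}$ of the outward normal derivative of $\psi$ on the sphere $\mathbf{S}^{n-1}(0,\rho)$. Passing from the normalized surface measure $d\sigma$ to ordinary surface measure and applying the divergence theorem on $\mathbf{B}^{n}_{\rho}$, I obtain
$$A'(\rho)=\frac{1}{\omega_{n}\,\rho^{\,n-1}}\int_{\mathbf{B}^{n}_{\rho}}\Delta\psi(x)\,dV(x),$$
where $\omega_{n}$ is the surface area of the unit sphere and $dV$ is unnormalized Lebesgue measure. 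As $\rho\to0^{+}$ the right-hand side tends to $0$ while $A(\rho)\to\psi(0)$ by continuity, so there is no boundary contribution at the origin.

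Next I would integrate in $\rho$ over $(0,r)$, write $A(r)-\psi(0)=\int_{0}^{r}A'(\rho)\,d\rho$, and interchange the order of integration by Fubini--Tonelli. A point $x$ with $|x|=t$ lies in $\mathbf{B}^{n}_{\rho}$ exactly when $\rho\in(t,r)$, so the coefficient multiplying $\Delta\psi(x)$ becomes $\frac{1}{\omega_{n}}\int_{|x|}^{r}\rho^{\,1-n}\,d\rho$. Evaluating this radial integral yields $\frac{1}{\omega_{n}}\cdot\frac{|x|^{2-n}-r^{2-n}}{n-2}$ when $n\geq3$ and $\frac{1}{\omega_{n}}\log\frac{r}{|x|}$ when $n=2$. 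Finally I would convert $dV$ to the normalized volume measure through $dV=\frac{\omega_{n}}{n}\,dV_{N}$ (the volume of the unit ball being $\omega_{n}/n$); the factor $\omega_{n}/n$ cancels the $1/\omega_{n}$ and produces the constants $\frac{1}{n(n-2)}$ for $n\geq3$ and $\frac{1}{2}$ for $n=2$, which are precisely those in $G_{n}(x,r)$. This establishes the identity.

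The only genuinely delicate point is the Fubini interchange together with control of the singularity of the kernel at $x=0$; this is harmless because $\Delta\psi$ is bounded on $\overline{\mathbf{B}^{n}_{r}}$ for $r<1$, while $|x|^{2-n}$ (for $n\geq3$) and $\log(1/|x|)$ (for $n=2$) are locally integrable on $\mathbb{R}^{n}$. Everything else is the standard pairing of the mean-value derivative identity with the divergence theorem. I expect the bookkeeping of the two distinct normalizations ($d\sigma$ normalized on the sphere, $dV_{N}$ normalized on the unit ball) to be where small slips could occur, so I would verify the cases $n=2$ and $n\geq3$ separately against the stated kernel.
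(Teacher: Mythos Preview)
The paper does not give its own proof of Theorem~O; it is quoted verbatim as \cite[Lemma~4.1]{P-09} and used as a tool in the proofs of Theorems~\ref{lem-im-1} and~\ref{thm-1}. Your direct derivation via the spherical-mean function $A(\rho)$, the divergence theorem, and Fubini is correct and is in fact the standard way to establish this identity; the bookkeeping with the two normalizations (so that $dV_N$ is normalized on the unit ball, giving $dV=\tfrac{\omega_n}{n}\,dV_N$) matches how the paper uses $dV_N$ elsewhere, e.g.\ in the polar-coordinate computations following \eqref{ji-1} and in Theorem~L. There is nothing to compare against in the paper itself, but your argument is complete as written.
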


For $p\in(0,\infty )$, denote by $L^{p}(\partial\mathbb{B}^{n})$ the
set of all measurable functions $F$ of $\partial\mathbb{B}^{n}$ into
$\mathbb{C}$ satisfying
$$\|F\|_{L^{p}}=\left(\int_{\partial\mathbb{B}^{n}}|F(\zeta)|^{p}d\sigma(\zeta)\right)^{\frac{1}{p}}<\infty.$$

Given $f\in\mathbf{H}^{p}(\mathbb{B}^{n},\mathbb{C})$, the Littlewood-Paley
type $\mathscr{G}$-function is defined as follows: For $\zeta\in \partial\mathbb{B}^{n}$,
$$\mathscr{G}(f)(\zeta)=\left(\int_{0}^{1}|D\,f(r\zeta)|^{2}(1-r)dr\right)^{\frac{1}{2}}.$$
It is known that for $p\in(1,\infty)$,
\beqq f\in\mathbf{H}^{p}(\mathbb{B}^{n},\mathbb{C})~\mbox{if and only
if}~ \mathscr{G}(f)\in\,L^{p}(\partial\mathbb{B}^{n}).\eeqq

For our application, we formulate the above result in the following form.

\begin{ThmP} {\rm (cf. \cite[Theorem 3.1]{AB})} \label{g-1-1}
Suppose that $p\in(1,\infty)$ and $f\in\mathscr{H}(\mathbb{B}^{n}, \mathbb{C})$. 
\begin{enumerate}
\item[{\rm ($a_1$)}]\quad  If $f\in\mathbf{H}^{p}(\mathbb{B}^{n},\mathbb{C})$, then
$$
\exists\ \text{a positive constant}\ C_8=C_8(n,p)\ \text{ such that}\
\|\mathscr{G}(f)\|_{L^{p}}
\leq\,C_8\|f\|_{p}.
$$

\item[{\rm ($a_2$)}]\quad If $\mathscr{G}(f)\in\,L^{p}(\partial\mathbb{B}^{n})$, then $$
\exists\ \text{a positive constant}\ C_9=C_9(n,p)\ \text{such that}\
\|f\|_{p}\leq|f(0)|+C_9\|\mathscr{G}(f)\|_{L^{p}}.
$$
\end{enumerate}
\end{ThmP}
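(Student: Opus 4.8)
The plan is to recognize $\mathscr{G}(f)$ as (half of) the radial square function of the subharmonic function $|f|$ and then route everything through the apparatus of Theorems I and J, applied in the real ambient dimension $2n$ since $\mathbb{B}^n=\mathbf{B}^{2n}$. The computational starting point is the elementary identity that, for a scalar holomorphic $f$, the function $|f|^2=f\overline f$ is real-analytic (smooth even across the zeros of $f$) and
$$
\Delta|f|^2=4\sum_{j=1}^n\Big|\frac{\partial f}{\partial z_j}\Big|^2=4|Df|^2 .
$$
Consequently $\mathcal{G}(\zeta,|f|)^2=\int_0^1(1-r)\Delta|f|^2(r\zeta)\,dr=4\int_0^1(1-r)|Df(r\zeta)|^2\,dr=4\,\mathscr{G}(f)(\zeta)^2$, that is, $\mathscr{G}(f)=\tfrac12\,\mathcal{G}(\cdot,|f|)$ on $\mathbf{S}^{2n-1}$. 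This single identification turns Theorem P into two statements about Stoll's $\mathcal{G}$-function, for which the paper has already assembled the right machinery.

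For part $(a_1)$ I would argue exactly as in Case 2 of the proof of Theorem \ref{lem-im-1}. Since $f$ is holomorphic, $|f|$ is subharmonic (to avoid its non-smoothness at zeros, realize it as the decreasing limit of the smooth subharmonic functions $(|f|^2+1/m)^{1/2}$), while $\Delta|f|^2=4|Df|^2$ is plurisubharmonic, hence quasi-nearly subharmonic. Thus $\psi=|f|$ meets the hypotheses of Theorem I and of Theorem J with $p_0=1$. Theorem I gives $\mathcal{G}(\zeta,|f|)\le\tau_7\,\mathcal{S}_{\alpha}(\zeta,|f|)$ pointwise, and Theorem J (valid for $p>p_0=1$, so for every $p\in(1,\infty)$) gives $\int_{\mathbf{S}^{2n-1}}\mathcal{S}_{\alpha}^p(\zeta,|f|)\,d\sigma\le C\|f\|_p^p$; combining these with $\mathscr{G}(f)=\tfrac12\mathcal{G}(\cdot,|f|)$ yields $\|\mathscr{G}(f)\|_{L^p}\le C_8\|f\|_p$, which is $(a_1)$.

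For part $(a_2)$ I would mirror the duality argument used to prove Lemma \ref{Lem-chxw}$(b_2)$, but with the Euclidean harmonic extension in place of the invariant one. Fix $\rho\in(0,1)$ and set $f_\rho(z)=f(\rho z)$, holomorphic and continuous up to $\overline{\mathbb{B}^n}$; by $L^p$–$L^q$ duality on the sphere, $M_p(\rho,f)=\sup\{|\int_{\mathbf{S}^{2n-1}}f_\rho^{\ast}\overline{\mathbf g}\,d\sigma|:\mathbf g\in C(\mathbf{S}^{2n-1}),\ \|\mathbf g\|_{L^q}\le1\}$ with $q=p/(p-1)$. For such $\mathbf g$, let $u=P[\mathbf g]$ be the (complex, harmonic) Poisson extension; since $f_\rho$ is bounded and $u(r\cdot)\to\mathbf g$ uniformly at the boundary, $\int f_\rho^{\ast}\overline{\mathbf g}\,d\sigma=\lim_{r\to1^-}\int_{\mathbf{S}^{2n-1}}f_\rho(r\zeta)\overline{u(r\zeta)}\,d\sigma$. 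Applying Theorem O to the real and imaginary parts of $f_\rho\overline u$, using that $f_\rho$ and $\overline u$ are harmonic so that $\Delta(f_\rho\overline u)=2\langle\nabla f_\rho,\nabla\overline u\rangle$ and hence $|\Delta(f_\rho\overline u)|\le 2|\nabla f_\rho|\,|\nabla u|$, together with the weight comparison $2n\rho^{2n-1}G_{2n}(\rho,1)\asymp(1-\rho)$, Cauchy–Schwarz in the radial variable, and then Hölder on the sphere, gives
$$
\Big|\int f_\rho^{\ast}\overline{\mathbf g}\,d\sigma\Big|\le|f(0)|+C\,\|g_\nabla(f_\rho)\|_{L^p}\,\|g_\nabla(u)\|_{L^q},
$$
where $g_\nabla(F)(\zeta)=(\int_0^1|\nabla F(r\zeta)|^2(1-r)\,dr)^{1/2}$ and the $|f(0)|$ comes from the boundary term $f_\rho(0)\overline{u(0)}$ via $|u(0)|\le\|\mathbf g\|_{L^1}\le\|\mathbf g\|_{L^q}\le1$. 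For holomorphic $f_\rho$ one has $|\nabla f_\rho|^2=2|Df_\rho|^2$, so $g_\nabla(f_\rho)=\sqrt2\,\mathscr{G}(f_\rho)\le\sqrt2\,\mathscr{G}(f)$ (the dilation contracts the square function); and for the harmonic $u$, the relation $\Delta|u|^2=2|\nabla u|^2$ lets me bound $\|g_\nabla(u)\|_{L^q}=\tfrac1{\sqrt2}\|\mathcal{G}(\cdot,|u|)\|_{L^q}\le C\|u\|_q\le C\|\mathbf g\|_{L^q}\le C$ by the very same pair of Theorems I, J (now with $\psi=|u|$, noting $|\nabla u|^2$ is subharmonic since each $\partial_{x_i}u$ is harmonic). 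Taking the supremum over $\mathbf g$ and then over $\rho$ yields $\|f\|_p\le|f(0)|+C_9\|\mathscr{G}(f)\|_{L^p}$, which is $(a_2)$.

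The substantive part is $(a_2)$, not $(a_1)$: the forward inequality is essentially immediate once $\mathscr{G}(f)$ is identified with $\tfrac12\mathcal{G}(\cdot,|f|)$, whereas the converse requires choosing the harmonic Poisson extension (so that the coefficient of $|f(0)|$ stays exactly $1$) and, crucially, reducing the auxiliary square-function bound for $u=P[\mathbf g]$ back to Theorems I, J. Working with the dilates $f_\rho$ rather than with $f^{\ast}$ directly is what lets the argument run for every $p\in(1,\infty)$ without a separate proof that the radial limits of $f$ exist, since all the limit interchanges are then justified by the boundedness of $f_\rho$. The only remaining technical nuisance, common to both parts, is the absence of $C^2$-smoothness of $|f|$ and $|u|$ on their zero sets; this is handled routinely by the regularizations $(|\,\cdot\,|^2+1/m)^{1/2}$ and a passage to the limit, exactly as in Section \ref{csw-sec4}, since $|f|^2$ and $\Delta|f|^2=4|Df|^2$ are globally smooth.
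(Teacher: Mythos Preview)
The paper does not give its own proof of Theorem~P; it is quoted as a known result from Ahern--Bruna \cite[Theorem~3.1]{AB} and used as a black box in Section~\ref{csw-sec2}. Your proposal, by contrast, manufactures a self-contained proof entirely out of the paper's own toolkit: the identification $\mathscr{G}(f)=\tfrac12\,\mathcal{G}(\cdot,|f|)$ via $\Delta|f|^2=4|Df|^2$ reduces $(a_1)$ directly to Theorems~I and~J (exactly the mechanism of Case~2 in Section~\ref{csw-sec4}), and your duality argument for $(a_2)$ transplants the proof of Lemma~\ref{Lem-chxw}$(b_2)$ from the invariant to the Euclidean setting, replacing $P_h[\mathbf g]$ by the ordinary Poisson extension $P[\mathbf g]$ and Theorem~L by Theorem~O. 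The auxiliary square-function bound for $u=P[\mathbf g]$ is then closed by a second appeal to Theorems~I and~J, which is legitimate since $\Delta|u|^2=2|\nabla u|^2$ is a sum of moduli-squared of harmonic functions and hence (quasi-nearly) subharmonic. The regularizations $(|f|^2+1/m)^{1/2}$ and $(|u|^2+1/m)^{1/2}$ needed to place $\psi$ in $C^2$ leave $\Delta\psi_m^2$ independent of $m$, so the constants are uniform and the limit is harmless. Your argument is correct and, in a sense, more in the spirit of the paper than the paper itself, which simply outsources this step.
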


The following relation will be also exploited in the proof of Theorem \ref{thm-1}.

\begin{ThmQ}\label{Lem-H}{\rm (\cite[p.158]{kuang})}
For $n\geq 2$, let $a_{1},\ldots,a_{n}\in \mathbb{C}$.
Then
$$
\left(\sum_{k=1}^{n}|a_{k}|\right)^{p}
\leqslant\left\{\begin{array}{ll}
\sum_{k=1}^{n}|a_{k}|^{p} , & \text{if} \;\;\;0<p<1,\\
 n^{p-1}\sum_{k=1}^{n}|a_{k}|^{p}, & \text{if} \;\;\; p \geqslant 1.
 \end{array}\right.
 $$
\end{ThmQ}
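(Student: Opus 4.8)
The plan is to strip away the complex numbers and reduce everything to an elementary inequality for nonnegative reals, then split according to the two ranges of $p$ and invoke the concavity (resp. convexity) of $t\mapsto t^{p}$. First I would set $b_{k}=|a_{k}|\geq0$; since the left-hand side is $\big(\sum_{k=1}^{n}b_{k}\big)^{p}$ and the right-hand side depends only on $\sum_{k=1}^{n}b_{k}^{p}$, the numbers $a_{k}$ enter solely through their moduli, so it suffices to prove both assertions for $b_{1},\dots,b_{n}\geq0$. If every $b_{k}=0$ both sides vanish, so I may assume $S:=\sum_{k=1}^{n}b_{k}>0$ throughout.

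For the range $0<p<1$ I would exploit the subadditivity of $t\mapsto t^{p}$. Dividing by $S^{p}$, the claim is equivalent to $\sum_{k=1}^{n}(b_{k}/S)^{p}\geq1$. Setting $t_{k}:=b_{k}/S\in[0,1]$, the fact that $0<p<1$ gives $t_{k}^{p}\geq t_{k}$ (raising a number in $[0,1]$ to a power below $1$ cannot decrease it); summing over $k$ and using $\sum_{k=1}^{n}t_{k}=1$ yields $\sum_{k=1}^{n}t_{k}^{p}\geq\sum_{k=1}^{n}t_{k}=1$, which is exactly what is needed.

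For the range $p\geq1$ I would instead use convexity of $t\mapsto t^{p}$ through Jensen's inequality applied to the uniform average, namely $\big(\tfrac1n\sum_{k=1}^{n}b_{k}\big)^{p}\leq\tfrac1n\sum_{k=1}^{n}b_{k}^{p}$; multiplying by $n^{p}$ gives $\big(\sum_{k=1}^{n}b_{k}\big)^{p}\leq n^{p-1}\sum_{k=1}^{n}b_{k}^{p}$. As an alternative I would invoke H\"older's inequality with conjugate exponents $p$ and $q=p/(p-1)$ when $p>1$, writing $\sum_{k=1}^{n}b_{k}=\sum_{k=1}^{n}b_{k}\cdot1\leq\big(\sum_{k=1}^{n}b_{k}^{p}\big)^{1/p}n^{1/q}$ and raising to the $p$-th power, using $p/q=p-1$; the case $p=1$ is an equality.

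The hard part will in fact be minimal: both halves are textbook consequences of the behaviour of $t\mapsto t^{p}$, and the only points deserving a word of care are the degenerate case $S=0$, disposed of at the outset, and the correct bookkeeping of the exponent $p-1=p/q$ in the H\"older step. I would present the two cases in the order above, since the $0<p<1$ argument isolates the cleanest use of $t^{p}\geq t$ on $[0,1]$ and the $p\geq1$ argument then mirrors it via the dual convexity statement.
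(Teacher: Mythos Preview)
Your argument is correct and entirely standard. Note, however, that the paper does not give its own proof of this statement: Theorem~Q is simply quoted from \cite[p.158]{kuang} as a known inequality and used as a tool, so there is nothing in the paper to compare your proof against. Your treatment via subadditivity of $t\mapsto t^{p}$ on $[0,1]$ for $0<p<1$ and Jensen/H\"older for $p\geq1$ is exactly the textbook derivation one would expect.
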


\subsection*{Proof of Theorem \ref{thm-1}}
 To prove the statement, assume that there is $p\in(1,\infty)$ such that $u\in \mathbf{h}^{p}(\mathbb{B}^{n},\mathbb{R}^{n})$. We consider two cases.

\noindent $\mathbf{Case~1.}$
Suppose that $p\in(1,2]$.

Let
$$
\begin{cases}
f=(f_{1},\ldots,f_{n});\\
u=(u_{1},\ldots,u_{n});\\
v=(v_{1},\ldots,v_{n}).
\end{cases}
$$
For $\mu\in\{1,2,\ldots\}$, define
$$
 F_{\mu}(z)=\left(|f(z)|^{2}+\frac{1}{\mu}\right)^{\frac{1}{2}}\ \ \&\ \
U_{\mu}(z)=\left(|u(z)|^{2}+\frac{1}{\mu}\right)^{\frac{1}{2}}\ \ \forall\ \ z\in\mathbb{B}^{n}.
$$

For the proof of this case, we need an upper bound for the quantity $M_{p}^{p}(r,F_{\mu})$ in terms of $M_{p}^{p}(r,U_{\mu})$, which is stated in \eqref{yao-1} below.
To reach this goal, let
$$
f_{z_{k}}=((f_{1})_{z_{k}},\ldots,(f_{n})_{z_{k}})\;\;\mbox\&\;\;f_{\overline{z}_{k}}=((f_{1})_{\overline{z}_{k}},\ldots,(f_{n})_{\overline{z}_{k}})\ \ \forall\ \  k\in\{1,2,\ldots,n\}.
$$

First, we obtain an upper bound for the quantity $\Delta(F_{\mu}^{p})$ in terms of $\|\overline{D}f\|$ as stated in \eqref{eq-4} below.
Since elementary computations give
$$\frac{\partial F_{\mu}^{p}}{\partial z_{k}}=\frac{p}{2}\left(|f|^{2}+\frac{1}{\mu}\right)^{\frac{p}{2}-1}\sum_{j=1}^{n}\Big[(f_{j})_{z_{k}}\overline{f_{j}}+f_{j}\overline{(f_{j})_{\overline{z}_{k}}}\Big],$$
it follows that
\beq\label{eq-2}\nonumber
\Delta(F_{\mu}^{p})&=&\frac{\sum_{k=1}^{n}
\left|\sum_{j=1}^{n}\Big[(f_{j})_{z_{k}}\overline{f_{j}}+f_{j}\overline{(f_{j})_{\overline{z}_{k}}}\Big]\right|^{2}}{\big(p(p-2)\big)^{-1}\left(|f|^{2}+\frac{1}{\mu}\right)^{2-\frac{p}{2}}}+\frac{\sum_{k=1}^{n}\left(|f_{z_{k}}|^{2}+|\overline{f_{\overline{z}_{k}}}|^{2}\right)} {(2p)^{-1}\left(|f|^{2}+\frac{1}{\mu}\right)^{1-\frac{p}{2}} }\\ \nonumber
&\leq&2p\left(|f|^{2}+\frac{1}{\mu}\right)^{\frac{p}{2}-1}\sum_{k=1}^{n}\left(|f_{z_{k}}|^{2}+|\overline{f_{\overline{z}_{k}}}|^{2}\right)\\
&=&2p\left(|f|^{2}+\frac{1}{\mu}\right)^{\frac{p}{2}-1}\left(\|Df\|_{F}^{2}+\|\overline{D}f\|_{F}^{2}\right).
\eeq

As the assumption of $f\in\mathscr{PH}_{n}(\kappa)$ implies that 
\beq\label{eq-3}
\left\|\overline{\overline{D}f}\right\|=\left\|\overline{\overline{D}f}[Df]^{-1}Df\right\|\leq \left\|\overline{\overline{D}f} [Df]^{-1}\right\|\|Df\|\leq\kappa\|Df\|,
\eeq
and since (\ref{A-Frobenius-Operator}) gives
\beqq\label{hjp-01}\|Df\|_{F}^{2}+\|\overline{D}f\|_{F}^{2}\leq\,n\left(\|Df\|^{2}+\|\overline{D}f\|^{2}\right),\eeqq
we infer from  (\ref{eq-2}) that
\be\label{eq-4}
\Delta(F_{\mu}^{p})
\leq2pn(1+\kappa^{2})\left(|f|^{2}+\frac{1}{\mu}\right)^{\frac{p}{2}-1}\|Df\|^{2}.
\ee

Second, we establish a lower bound for the quantity $\Delta(U_{\mu}^{p})$ in terms of $\|\overline{D}f\|$ as stated in \eqref{eq-5} below.

By the similar arguments as in the proof of \eqref{eq-4}, we have
\beqq\label{eq-2.0}
\Delta(U_{\mu}^{p})&=&4p(p-2)\left(|u|^{2}+\frac{1}{\mu}\right)^{\frac{p}{2}-2}\sum_{k=1}^{n}\left|\sum_{j=1}^{n}u_{j}(u_{j})_{z_{k}}\right|^{2}\\
&&+
\frac{\sum_{j=1}^{n}|\nabla u_{j}|^{2}}{p^{-1} \left(|u|^{2}+\frac{1}{\mu}\right)^{1-\frac{p}{2}}},
\eeqq
and by Cauchy-Schwarz's inequality, we obtain
\beqq\label{eq-2fg.0}
\sum_{k=1}^{n}\left|\sum_{j=1}^{n}u_{j}(u_{j})_{z_{k}}\right|^{2}&\leq&|u|^{2}\sum_{k=1}^{n}|u_{z_{k}}|^{2}
=\frac{1}{4}|u|^{2}\sum_{j=1}^{n}|\nabla u_{j}|^{2}\\ \nonumber
&\leq&\frac{1}{4}\left(|u|^{2}+\frac{1}{\mu}\right)\sum_{j=1}^{n}|\nabla u_{j}|^{2},
\eeqq where $u_{z_{k}}=((u_{1})_{z_{k}},\ldots,(u_{n})_{z_{k}})$.
Then it follows from the assumption of $p\in(1,2]$ (as assumed in this case) that
\be\label{eq-cw-2}
\Delta(U_{\mu}^{p})\geq p(p-1)\left(|u|^{2}+\frac{1}{\mu}\right)^{\frac{p}{2}-1}\sum_{j=1}^{n}|\nabla u_{j}|^{2}.
\ee

Since for $j,k\in\{1,\ldots,n\}$, $$\left|(f_{j})_{z_{k}}+\overline{(f_{j})_{\overline{z}_{k}}}\right|^{2}=\big((u_{j})_{x_{k}}\big)^{2}+\big((u_{j})_{y_{k}}\big)^{2},$$ where $z_{k}=x_{k}+iy_{k}$,
 by (\ref{A-Frobenius-Operator}), we see that
\beqq\label{eq-cw-1}
\sum_{j=1}^{n}|\nabla u_{j}|^{2}=\left\|Df+\overline{\overline{D}f}\right\|_{F}^{2}\geq\left\|Df+\overline{\overline{D}f}\right\|^{2}\geq
\left(\|Df\|-\left\|\overline{\overline{D}f}\right\|\right)^{2},
\eeqq
and thus, we know from (\ref{eq-3}) and (\ref{eq-cw-2}) that
\beq\label{eq-5}
\Delta(U_{\mu}^{p})&\geq& p(p-1)\left(|u|^{2}+\frac{1}{\mu}\right)^{\frac{p}{2}-1}\left(\|Df\|-\left\|\overline{\overline{D}f}\right\|\right)^{2}\\ \nonumber
&\geq&p(p-1)(1-\kappa)^{2}\left(|u|^{2}+\frac{1}{\mu}\right)^{\frac{p}{2}-1}\|Df\|^{2}.
\eeq

Finally, since the assumption of $p\in(1,2]$ ensures that
 \beqq\label{eq-5-167}\left(|f(z)|^{2}+\frac{1}{\mu}\right)^{\frac{p}{2}-1}\leq\left(u^{2}(z)+\frac{1}{\mu}\right)^{\frac{p}{2}-1},\eeqq
and since Theorem O   and (\ref{eq-4}) lead to
\beqq
M_{p}^{p}(r,F_{\mu})&=&(F_{\mu}(0))^{p}+\int_{\mathbb{B}^{n}(0,r)}\Delta(|F_{\mu}(z)|^{p})G_{2n}(z,r)\,dV_{N}(z)\\ \nonumber
&\leq&(F_{\mu}(0))^{p}
+\frac{2np}{(1+\kappa^{2})^{-1}}\int_{\mathbb{B}^{n}(0,r)}\frac{|Df(z)|^{2}G_{2n}(z,r)}{\left(|f(z)|^{2}+{\mu}^{-1}\right)^{1-\frac{p}{2}}}\,dV_{N}(z),
\eeqq
we obtain from (\ref{eq-5}) that
\beq\label{yao-1}\nonumber
M_{p}^{p}(r,F_{\mu})&\leq& (F_{\mu}(0))^{p}\\ \nonumber
&&+\frac{2np}{(1+\kappa^{2})^{-1}}\int_{\mathbb{B}^{n}(0,r)}\frac{|Df(z)|^{2}G_{2n}(z,r)}{\left(u^{2}(z)+{\mu}^{-1}\right)^{1-\frac{p}{2}}}\,dV_{N}(z)\\ \nonumber
&\leq&(F_{\mu}(0))^{p}\\ \nonumber
&&+\frac{2n(1+\kappa^{2})}{(p-1)(1-\kappa)^{2}}\int_{\mathbb{B}^{n}(0,r)}\Delta(U_{\mu}^{p}(z))G_{2n}(z,r)\,dV_{N}(z)\\
&\leq&(F_{\mu}(0))^{p}+\frac{2n(1+\kappa^{2})}{(p-1)(1-\kappa)^{2}}\left(M_{p}^{p}(r,U_{\mu})-(U_{\mu}(0))^{p}\right),
\eeq
which is what we need.

As
$$M_{p}^{p}(r,v)\leq
M_{p}^{p}(r,f)=\lim_{\mu\rightarrow\infty}M_{p}^{p}(r,F_{\mu}),$$ we infer from \eqref{yao-1}, together with
the fact $f(0)=u(0)$ and Lebesgue's control-convergent theorem, that
\beq\label{eq-6}
M_{p}^{p}(r,v)&\leq&
\lim_{\mu\rightarrow\infty}(F_{\mu}(0))^{p}+\frac{2n(1+\kappa^{2})}{(p-1)(1-\kappa)^{2}}\lim_{\mu\rightarrow\infty}\left(M_{p}^{p}(r,U_{\mu})-(U_{\mu}(0))^{p}\right)\\
\nonumber
&\leq&|u(0)|^{p}+\frac{2n(1+\kappa^{2})}{(p-1)(1-\kappa)^{2}}\left(M_{p}^{p}(r,u)-|u(0)|^{p}\right)\\ \nonumber
&\leq&\frac{2n(1+\kappa^{2})}{(p-1)(1-\kappa)^{2}}M_{p}^{p}(r,u),
\eeq which implies the implication from $u\in \mathbf{h}^{p}(\mathbb{B}^{n},\mathbb{C}^{n})$ to $v\in \mathbf{h}^{p}(\mathbb{B}^{n},\mathbb{C}^{n})$. This ensures that $f\in\mathbf{h}^{p}(\mathbb{B}^{n},\mathbb{C}^{n})$.

Next, we come to prove the statement (a) for $\kappa=0$.
Since $\mathbb{B}^n$ is a bounded symmetric domain, it follows from Theorem N that for each $j\in\{1,\ldots,n\}$,
\be\label{eq-cw-9}M_{p}(r,v_{j})\leq~\cot(\pi/(2p^{\ast}))M_{p}(r,u_{j}).\ee
Then the Minkowski inequality gives
\beqq
M_{p}(r,v)&\leq&\left(\sum_{j=1}^{n}M_{p}^{2}(r,v_{j})\right)^{\frac{1}{2}}\\
&\leq&\cot(\pi/(2p^{\ast}))\left(\sum_{j=1}^{n}M_{p}^{2}(r,u_{j})\right)^{\frac{1}{2}}\\
&\leq&\cot(\pi/(2p^{\ast}))\left(\sum_{j=1}^{n}M_{p}^{2}(r,u)\right)^{\frac{1}{2}}\\
&\leq&\sqrt{n}\cot(\pi/(2p^{\ast}))M_{p}(r,u),
\eeqq as required.

\noindent $\mathbf{Case~2.}$
Suppose that $p\in (2,\infty)$.

The assumption of $f\in\mathscr{PH}_{n}(\kappa)$ implies that $f$ admits the canonical decomposition
$f = h + \overline{g}$, where $h=(h_{1},\ldots,h_{n})$ is locally biholomorphic and $g=(g_{1},\ldots,g_{n})$ is holomorphic in $\mathbb{B}^{n}$ with $g(0)=0$.
To prove that $f\in\mathbf{h}^{p}(\mathbb{B}^{n},\mathbb{R}^{n})$, clearly, it suffices to show that both $h$ and $g$ belong to $\mathbf{h}^{p}(\mathbb{B}^{n},\mathbb{C}^{n})$.

We first prove that $h\in\mathbf{h}^{p}(\mathbb{B}^{n},\mathbb{C}^{n})$. For this, let $$F=g+h\;\;\mbox{and}\;\;{\rm Im}(F)=(\widetilde{v}_{1},\ldots,\widetilde{v}_{n}).$$

Since ${\rm Re}(F)=u$, we see that
$${\rm Re}(F)\in\mathbf{h}^{p}(\mathbb{B}^{n},\mathbb{R}^{n}),$$
and thus, for each $j\in\{1,\ldots,n\}$, $u_{j}\in\mathbf{h}^{p}(\mathbb{B}^{n},\mathbb{R})$.

To continue the proof, let $$F_{j}=h_{j}+g_{j}$$ for each $j\in\{1,\ldots,n\}$.
In the following, we prove that $h_{j}\in\mathbf{h}^{p}(\mathbb{B}^{n},\mathbb{C}^{n})$ for each $j\in\{1,\ldots,n\}$.

Since $\widetilde{v}_{j}(0)=0$, by replacing $\Omega_{0,b}$ and $g$ by $\mathbb{B}^n$ and $F_j$, respectively, we obtain from Theorem N that
\beqq\label{eq-cw-5}
\frac{\|\widetilde{v}_{j}\|_{p}}{\cos(\pi/(2p))}\leq\|F_{j}\|_{p}\leq\frac{\|u_{j}\|_{p}}{\sin(\pi/(2p))}\leq\frac{\|u\|_p}{\sin(\pi/(2p))}.
\eeqq Here the fact of $p^*=p$ is applied.
Then it follows from
 Theorem P($a_1$) that there is a positive constant $C_6=C_6(n,p)$ such that
\beqq
\sum_{j=1}^{n}\int_{\partial\mathbb{B}^{n}}\big(\mathscr{G}(F_{j})(\zeta)\big)^{p}d\sigma(\zeta)&\leq&C_6\sum_{j=1}^{n}\|F_{j}\|_{p}^{p}\\
&\leq&\left(\frac{C_6}{\left(\sin(\pi/(2p))\right)^{p}}\right)\sum_{j=1}^{n}\|u_{j}\|_{p}^{p}\\
&\leq&
\left(\frac{n C_6}{\left(\sin(\pi/(2p))\right)^{p}}\right)\|u\|_{p}^{p}.
\eeqq
Hence we get from Theorem Q and the following fact
$$\|DF\|_{F}^{2}=\sum_{k=1}^{n}\sum_{j=1}^{n}\left|(F_{j})_{z_{k}}\right|^{2}=\sum_{j=1}^{n}|DF_{j}|^{2}$$
that
\beq\label{eq-cw-7} \quad\quad
I_{F}&\leq&n^{\frac{p}{2}-1}
\sum_{j=1}^{n}\int_{\partial\mathbb{B}^{n}}\big(\mathscr{G}(F_{j})(\zeta)\big)^{p}d\sigma(\zeta)\\ \nonumber
&\leq&\left(\frac{n^{\frac{p}{2}}C_6}{\left(\sin(\pi/(2p))\right)^{p}}\right)\|u\|_{p}^{p},
\eeq where $$I_{F}=\int_{\partial\mathbb{B}^{n}}\left(\int_{0}^{1}\|DF(r\zeta)\|_{F}^{2}(1-r)dr\right)^{\frac{p}{2}}d\sigma(\zeta).$$

Moreover, since
\be\label{vv-01}\|Dg\|=\|Dg[Dh]^{-1}Dh\|\leq\|\omega_{f}\|\|Dh\|\leq\kappa\|Dh\|,\ee
we deduce from (\ref{A-Frobenius-Operator})
that
\beqq \|DF\|_{F}^{2}&\geq&\|DF\|^{2}\geq(\|Dh\|-\|Dg\|)^{2}\geq(1-\kappa)^{2}\|Dh\|^{2}\\
&\geq&{(1-\kappa)^{2}}{n}^{-1}\|Dh\|_{F}^{2},\eeqq
and so, by (\ref{eq-cw-7}),
we get
\beq\label{eq-cw-8}\nonumber
\int_{\partial\mathbb{B}^{n}}\left(\int_{0}^{1}\|Dh(r\zeta)\|_{F}^{2}(1-r)dr\right)^{\frac{p}{2}}d\sigma(\zeta)
&\leq&\frac{n^{\frac{p}{2}}I_F}{(1-\kappa)^{p}}\\
&\leq&\frac{n^{p}C_6\|u\|_{p}^{p}}{(1-\kappa)^{p}\left(\sin(\pi/(2p))\right)^{p}}.
\eeq
This gives
\beqq
\sum_{j=1}^{n}\int_{\partial\mathbb{B}^{n}}\big(\mathscr{G}(h_{j})(\zeta)\big)^{p}d\sigma(\zeta)&\leq& n\int_{\partial\mathbb{B}^{n}}\left(\int_{0}^{1}\|Dh(r\zeta)\|_{F}^{2}(1-r)dr\right)^{\frac{p}{2}}d\sigma(\zeta)\\
&\leq&\frac{n^{p+1}C_6\|u\|_{p}^{p}}{(1-\kappa)^{p}\left(\sin(\pi/(2p))\right)^{p}},
\eeqq
and thus, Theorem P($a_2$) guarantees that  $$h_{j}\in\mathbf{h}^{p}(\mathbb{B}^{n},\mathbb{C}^{n})\ \ \forall\ \ j\in\{1,\ldots,n\},
$$ as required.

As Theorem Q leads to
\beqq
M_{p}(r,h)&=&\left(\int_{\partial\mathbb{B}^{n}}\left(\sum_{j=1}^{n}|h_{j}(r\zeta)|^{2}\right)^{\frac{p}{2}}d\sigma(\zeta)\right)^{\frac{1}{p}}\\
&\leq&n^{\frac{p-2}{2p}}\left(\sum_{j=1}^{n}\int_{\partial\mathbb{B}^{n}}\sum_{j=1}^{n}|h_{j}(r\zeta)|^{p}d\sigma(\zeta)\right)^{\frac{1}{p}}\\
&\leq&n^{\frac{p-2}{2p}}\left(\sum_{j=1}^{n}M_{p}(r,h_{j})\right),
\eeqq
and since $$
h_{j}\in\mathbf{h}^{p}(\mathbb{B}^{n},\mathbb{C}^{n})\ \ \forall\ \ j\in\{1,\ldots,n\},
$$
we know
$$
h\in\mathbf{h}^{p}(\mathbb{B}^{n},\mathbb{C}^{n}).
$$

Next, we show that $g\in\mathbf{h}^{p}(\mathbb{B}^{n},\mathbb{C}^{n}).$ To reach this goal, let $$I_{g}=\int_{\partial\mathbb{B}^{n}}\left(\int_{0}^{1}\|Dg(r\zeta)\|_{F}^{2}(1-r)dr\right)^{\frac{p}{2}}d\sigma(\zeta).$$
Then (\ref{A-Frobenius-Operator}) gives
\beqq
I_{g}&\leq&n^{\frac{p}{2}}
\int_{\partial\mathbb{B}^{n}}\left(\int_{0}^{1}\|Dg(r\zeta)\|^{2}(1-r)dr\right)^{\frac{p}{2}}d\sigma(\zeta).
\eeqq

Due to $$h\in\mathbf{h}^{p}(\mathbb{B}^{n},\mathbb{C}^{n}),$$ it follows from (\ref{A-Frobenius-Operator}),
 (\ref{vv-01}) and (\ref{eq-cw-8}) that
\beq\label{arb-1}
I_{g}
&\leq&n^{\frac{p}{2}}\kappa^{p}
\int_{\partial\mathbb{B}^{n}}\left(\int_{0}^{1}\|Dh(r\zeta)\|^{2}(1-r)dr\right)^{\frac{p}{2}}d\sigma(\zeta)\\ \nonumber
&\leq&n^{\frac{p}{2}}\kappa^{p}
\int_{\partial\mathbb{B}^{n}}\left(\int_{0}^{1}\|Dh(r\zeta)\|_{F}^{2}(1-r)dr\right)^{\frac{p}{2}}d\sigma(\zeta)\\ \nonumber
&\leq&\frac{\kappa^{p}n^{\frac{3p}{2}}C_6\|u\|_{p}^{p}}{(1-\kappa)^{p}\left(\sin(\pi/(2p))\right)^{p}}.
\eeq
This shows that
\beqq
\sum_{j=1}^{n}\int_{\partial\mathbb{B}^{n}}\big(\mathscr{G}(g_{j})(\zeta)\big)^{p}d\sigma(\zeta)\leq nI_{g}\leq\frac{\kappa^{p}n^{\left(\frac{3p}{2}+1\right)}C_6\|u\|_{p}^{p}}{(1-\kappa)^{p}\left(\sin(\pi/(2p))\right)^{p}},
\eeqq
and thus, Theorem P($a_2$) yields that
\beqq\label{knv-1}g_{j}\in\mathbf{h}^{p}(\mathbb{B}^{n},\mathbb{C}^{n})\ \ \forall\ \ j\in\{1,\ldots,n\} .\eeqq

Since the Minkowski inequality gives
\beqq
M_{p}(r,g)\leq\left(\sum_{j=1}^{n}M_{p}^{2}(r,g_{j})\right)^{\frac{1}{2}}\leq\sum_{j=1}^{n}M_{p}(r,g_{j}),
\eeqq we get
 $$g\in\mathbf{h}^{p}(\mathbb{B}^{n},\mathbb{C}^{n}).$$

Now, we  prove the statement (b) for $\kappa=0$.
By Theorem N, we know
\be\label{eq-cw-71}M_{p}(r,v_{j})\leq~\cot(\pi/(2p))M_{p}(r,u_{j})\ \ \forall\ \ j\in\{1,\ldots,n\},\ee
and thus, the Minkowski inequality gives
\beqq
M_{p}(r,v)&\leq&\left(\sum_{j=1}^{n}M_{p}^{2}(r,v_{j})\right)^{\frac{1}{2}}\leq\cot(\pi/(2p))\left(\sum_{j=1}^{n}M_{p}^{2}(r,u_{j})\right)^{\frac{1}{2}}\\
&\leq&\cot(\pi/(2p))\left(\sum_{j=1}^{n}M_{p}^{2}(r,u)\right)^{\frac{1}{2}}\\
&\leq&\sqrt{n}\cot(\pi/(2p))M_{p}(r,u),
\eeqq which is what we need.

The proof that $p_{0}=1$ is sharp is the same as that in Theorem \ref{lem-im-1}.
\qed

\bigskip

{\bf Acknowledgements:} 
S. L. Chen was partially supported by the
National Science Foundation of China (No. 1257011497), the
Hunan Provincial Natural Science Foundation of China (No. 2022JJ10001), the
Key Projects of Hunan Provincial Department of Education (No. 21A0429),
the Double First-Class University Project of Hunan Province (Xiangjiaotong [2018]469),
the Science and Technology Plan Project of Hunan Province (No. 2016TP1020),
and the Discipline Special Research Projects of Hengyang Normal University (No. XKZX21002). M. Z. Huang \& X. T. Wang were partially supported by NNSFs of China (No. 12371071). J. Xiao was supported by NSERC of Canada (No. 202979).






\normalsize

\end{document}